\title{The moduli stack of enriched structures and a logarithmic compactification}
\numberwithin{equation}{subsection}
\newcommand*{\doublerightarrow}[2]{\mathrel{
  \settowidth{\@tempdima}{$\scriptstyle#1$}
  \settowidth{\@tempdimb}{$\scriptstyle#2$}
  \ifdim\@tempdimb>\@tempdima \@tempdima=\@tempdimb\fi
  \mathop{\vcenter{
    \offinterlineskip\ialign{\hbox to\dimexpr\@tempdima+1em{##}\cr
    \rightarrowfill\cr\noalign{\kern.5ex}
    \rightarrowfill\cr}}}\limits^{\!#1}_{\!#2}}}
\newcommand*{\triplerightarrow}[1]{\mathrel{
  \settowidth{\@tempdima}{$\scriptstyle#1$}
  \mathop{\vcenter{
    \offinterlineskip\ialign{\hbox to\dimexpr\@tempdima+1em{##}\cr
    \rightarrowfill\cr\noalign{\kern.5ex}
    \rightarrowfill\cr\noalign{\kern.5ex}
    \rightarrowfill\cr}}}\limits^{\!#1}}}
\newcommand{\ul}[1]{{\underline{#1}}}
\newcommand{\Div}{\mathfrak{Div}}
\newcommand{\Mbar}{\overline{M}}
\newcommand{\gp}{\operatorname{gp}}
\renewcommand{\gp}{{\operatorname{gp}}}
\newcommand{\Cf}{\mathfrak{C}}
\newcommand{\Mf}{\mathfrak{M}}
\newcommand{\Uf}{\mathfrak{U}}
\newcommand{\Vf}{\mathfrak{V}}
\newcommand{\Nbar}{\overline{N}}
\newcommand{\x}{x}
\newcommand{\CFG}{\textbf{CFG}}
\newcommand{\Sch}{\textbf{Sch}}
\newcommand{\LogSch}{\textbf{LogSch}}
\newcommand{\Log}{\textbf{Log}}
\newcommand{\LC}{\textbf{LC}}
\newcommand{\RLC}{\textbf{RLC}}
\newcommand{\Min}{\textbf{Min}}
\newcommand{\ARLC}{\textbf{WRLC}}
\newcommand{\MLC}{\textbf{MLC}}
\newcommand{\et}{\text{\'et}}
\DeclareMathOperator{\Hom}{Hom}
\DeclareMathOperator{\Aut}{Aut}
\DeclareMathOperator{\Bl}{Bl}
\DeclareMathOperator{\id}{id}
\DeclareMathOperator{\Spec}{Spec}
\DeclareMathOperator{\Cuts}{Cuts}
\newcommand{\A}{\mathbb{A}}
\newcommand{\C}{\mathbb{C}}
\newcommand{\G}{\mathbb{G}}
\newcommand{\N}{\mathbb{N}}
\renewcommand{\P}{\mathbb{P}}
\newcommand{\R}{\mathbb{R}}
\newcommand{\Z}{\mathbb{Z}}
\newcommand{\Ccal}{\mathcal{C}}
\newcommand{\Lcal}{\mathcal{L}}
\newcommand{\Mcal}{\mathcal{M}}
\newcommand{\Ocal}{\mathcal{O}}
\tikzset{
    labl/.style={anchor=south, rotate=-90, inner sep=.5mm}
}
\newcommand{\sub}{\subseteq}
\theoremstyle{definition}
\newtheorem{definition}{Definition}[section]
\newtheorem{example}[definition]{Example}
\newtheorem{remark}[definition]{Remark}
\theoremstyle{plain}
\newtheorem{proposition}[definition]{Proposition}
\newtheorem{lemma}[definition]{Lemma}
\newtheorem{theorem}[definition]{Theorem}
\newtheorem{corollary}[definition]{Corollary}
\newtheorem{intheorem}{Theorem}
\theoremstyle{remark}
\LetLtxMacro{\phiorig}{\phi}
\renewcommand{\phi}{\varphi}
\author{Pim Spelier}
\date{\today}
\newcounter{nootje}
\newcommand{\beq}{\begin{equation}}
\newcommand{\eeq}{\end{equation}}
\newcommand{\beqs}{\begin{equation*}}
\newcommand{\eeqs}{\end{equation*}}
\tikzset{
  symbol/.style={
    draw=none,
    every to/.append style={
      edge node={node [sloped, allow upside down, auto=false]{$#1$}}}
  }
}
\begin{document}
\maketitle
\tableofcontents


\begin{abstract}
    Enriched curves have been studied over algebraically closed fields by Main\`o (\cite{maino1998}) and recently over general base schemes in \cite{biesel2019}. In this paper, we study enriched curves from a logarithmic viewpoint: we give a succinct definition of the stack of rich log curves as an open substack of the stack of log curves, and define an enriched curve to be a curve with a minimal rich log structure on it. This logarithmic view point turns out to be a natural language for enriched structures, leading to a simple modular compactification. This modular compactification is a smooth log blowup of the stack of log curves, answering affirmatively two questions from \cite{biesel2019}. We also generalise the concept of rich curves to $r$-rich curves, and show similar results. We include a chapter phrasing some of the key definitions solely in the language of real tropical geometry.
\end{abstract}

\section{Introduction}
The theory of enriched curves, or curves with enriched structures, has been studied mainly in the context of limits of linear series. Let $\Cf/k[[t]]$ be a prestable curve with smooth generic fibre, and let $\Lcal$ be a line bundle on the generic fiber. One of the basic goals is to understand the ways in which $\Lcal$ can extend to the whole of $\Cf$. In \cite{maino1998} Main\`o studies this using so-called \emph{enriched structures} on prestable curves over algebraically closed fields, as defined in the following (paraphrased) definition.
\begin{definition}[First definition of \cite{maino1998}]
\label{def:richoverk}
Let $C/\Spec k$ with $k = \overline{k}$ be a stable curve with irreducible components $C_1,\dots,C_n$. An \emph{enriched structure} on $C$ is a collection of line bundles $\Lcal_1,\dots,\Lcal_n$ on $C$ such that there exists a regular smoothing $\Cf/\Spec k[[t]]$ with $\Lcal_i \cong O_\Cf(C_i)|_C$. An \emph{enriched curve} is a stable curve together with an enriched structure.
\end{definition}

This definition cannot easily be extended to prestable curves over general bases since the notion of components breaks down. In a recent paper by Biesel and Holmes (\cite{biesel2019}), a generalisation was proposed. They first define the functor of enriched structures \'etale locally on the base of a prestable curve, using the combinatorics of the dual graph, and then apply descent. They show that this functor of enriched structures is representable by an algebraic space over $\Mf$, the stack of prestable curves, and they also define a modular compactification.

In this paper, we take a different approach to constructing these moduli spaces, through log geometry, replacing the combinatorics by tropical curves and monoids. We work with log curves, prestable curves with a log smooth log structure, throughout this paper.  We define when a log curve is rich, and define the algebraic stack with log structure $\Min(\RLC)/\Sch$ of rich log curves\footnote{In fact, $\Min(\RLC)$ consists of the \emph{minimal} rich log curves. It represents the stack $\RLC$ fibred over $\LogSch$ of rich log curves. For more details, see Section~\ref{subsec:gillam}.}. Using this machinery, results such as base changes of rich log curves being rich follow automatically. Over a geometric point, a rich log curve naturally comes with an enriched structure, and vice versa. Our main theorem consists of the following results on the stack $\Min(\RLC)$. 
\begin{intheorem}[Theorem~\ref{thm:rich}, Corollary~\ref{cor:maino}, Theorem~\ref{thm:rlcsmoothalgebraic}]
\label{thm:mainrlc}
The stack $\Min(\RLC)$ of rich log curves is a smooth algebraic stack with log structure. The forgetful map to $\mathfrak{M}$, the stack of prestable curves, is log \'etale. There is a canonical bijection between the $k$-points of the fiber of $\Min(\RLC)$ over a geometric point $\Spec k$ and the enriched curves over $\Spec k$.
\end{intheorem}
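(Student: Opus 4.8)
The plan is to reduce all three assertions to two inputs about the $\LogSch$-fibred category $\RLC$ of rich log curves, both established in the run-up to the theorem: that $\RLC$ sits as an \emph{open} substack of the stack of all log curves, and that it admits minimal objects in the sense of Gillam's formalism of minimal log structures (Section~\ref{subsec:gillam}). Modelling the stack of all log curves by Olsson's stack $\mathcal{L}og_{\Mf}$ over the stack of prestable curves $\Mf$ (equipped with its canonical log structure), the openness input should identify $\Min(\RLC)$ with an open substack of $\mathcal{L}og_{\Mf}$ carrying the restricted tautological log structure, while the minimality input guarantees that $\Min(\RLC)$ exists and represents $\RLC$. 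I would organise the argument as: (i) algebraicity, smoothness, and log étaleness over $\Mf$; (ii) existence and an explicit tropical description of the minimal rich log structures; and (iii) the bijection over a geometric point.

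For (i) I would invoke Olsson's theory: $\mathcal{L}og_{\Mf}$ is algebraic and locally of finite presentation, its tautological log structure renders the structure morphism $\mathcal{L}og_{\Mf}\to\Mf$ log étale, and since $\Mf$ is log smooth this exhibits $\mathcal{L}og_{\Mf}$ as a log smooth algebraic log stack. As an open substack, $\Min(\RLC)$ inherits algebraicity, and the forgetful morphism $\Min(\RLC)\to\Mf$, being the restriction of a log étale morphism to an open substack, is log étale. Granting the description in (ii) that richness amounts to \emph{inverting} certain monoid directions, this open immersion is a localisation whose underlying morphism to $\Mf$ is smooth (a torus torsor rather than a subdivision); since $\Mf$ is a smooth algebraic stack, so is $\Min(\RLC)$. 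This yields the first two sentences of the theorem (Theorem~\ref{thm:rich}, Theorem~\ref{thm:rlcsmoothalgebraic}), contingent only on openness and on Gillam's criterion.

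The core is (ii). Working étale-locally on $\Mf$ so that the components $C_1,\dots,C_n$ and the dual graph $\Gamma$ are available, I would describe richness tropically: to each component $C_v$ one attaches a piecewise linear twisting class $\phi_v$ on the tropicalisation, whose associated logarithmic line bundle specialises to $O_\Cf(C_v)$, and a log curve is rich precisely when the base monoid has group-completed in the directions needed to realise all of the $\phi_v$ as honest, smoothing-independent classes. This is an open condition in $\mathcal{L}og_{\Mf}$, cut out by inverting a canonical finite set of monoid elements. Minimality then amounts to showing that among all rich localisations of the basic monoid $\N^{E(\Gamma)}$ there is a \emph{least} one, equivalently that the set of directions one must invert is canonical; establishing this initial rich localisation is what feeds Gillam's criterion and simultaneously pins down the expected torus fibre $\G_m^{b_1(\Gamma)}$ of $\Min(\RLC)\to\Mf$.

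For (iii) I fix $C/\Spec k$ with $k=\overline k$. Unwinding the minimal rich log structure, a $k$-point of the fibre is the datum of the units realising the $\phi_v$, and I would send it to the tuple $\Lcal_v := (O_\Cf(C_v))_{|C}$ given by restricting the logarithmic line bundle of $\phi_v$; a bookkeeping of slopes recovers Main\`o's multidegrees $C_v\cdot C_w$. To match Definition~\ref{def:richoverk} one checks realisability both ways: a rich log curve over $\Spec k$ lifts, by log smoothness, to a log smoothing over the standard log point on $\Spec k[[t]]$ producing a regular smoothing $\Cf$ with $O_\Cf(C_v)|_C\cong\Lcal_v$; conversely the canonical log structure of any regular smoothing restricts to a rich log structure whose associated tuple is $\big(O_\Cf(C_v)|_C\big)$. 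As both sides are torsors under the same group, an equivariant such map is automatically bijective, giving Corollary~\ref{cor:maino}. The hardest part, where I expect the real work, is (ii) together with the surjectivity half of (iii): proving that the rich localisations admit a canonical least element (so that Gillam's machine genuinely applies), and that \emph{every} enriched structure of \cite{maino1998} is realised logarithmically. The latter is exactly the assertion that the line bundles realisable by regular smoothings coincide with those realisable logarithmically, which I would prove by comparing the deformation theory of log curves over the standard log point with the classical deformation theory of smoothings, using that sufficiently primitive directions in the minimal rich monoid correspond to regular rather than merely flat smoothings.
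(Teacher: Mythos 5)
Your skeleton for algebraicity and log \'etaleness is sound and is in fact the alternative route the paper itself flags in a remark: richness is an open condition on strict pullbacks (Proposition~\ref{prop:richopen}), so $\Min(\RLC)$ sits as an open substack of $\Log(\LC)=\mathcal{L}og_{\Mf}$ by Gillam/Wise, inheriting algebraicity from Olsson and log \'etaleness over $\Mf$ (the paper instead deduces algebraicity later, from $\Min(\RLC)$ being open in the log blowup $\Min(\ARLC)$ of Theorem~\ref{thm:comp}). The genuine gap is your smoothness argument. The underlying morphism $\Min(\RLC)\to\Mf$ is \emph{not} smooth, not flat, and not a torus torsor: over a nuclear chart $\Spec k[[u,v]]$ of the $2$-gon locus, $\Min(\RLC_1)$ is the blowup at the origin minus the two points where the exceptional curve meets the proper transforms of the axes, so its fibers jump from a single point to $\G_m$ over the deepest stratum; a blowup is never flat, so smoothness of $\Mf$ buys you nothing. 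Relatedly, richness is not a ``localisation'' of the basic monoid: the minimal rich characteristic monoid is $\N^T$, obtained from $\N^E$ by the \emph{non-injective} root map collapsing each circuit-connected component (Proposition~\ref{def:richminimal}), not by inverting elements of $\N^E$; and the deepest fibre is $\G_m^{|E|-|T|}$, not $\G_m^{b_1(\Gamma)}$ (for the triangle these are $\G_m^2$ versus $\G_m$, cf.\ Example~\ref{ex:richalgclosed}). The correct argument (Theorem~\ref{thm:rlcsmoothalgebraic}) is: $\Min(\RLC)$ is log smooth over $\Spec\Z$, and its characteristic monoids at geometric points are \emph{free}, so Ogus III.3.3.1 writes it \'etale-locally as a strict \'etale cover of $\A^n_\Z$. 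Freeness is the irreplaceable input --- it is precisely what fails for the compactification $\Min(\ARLC_r)$ when $r>1$, which is not smooth --- so any proof of smoothness that never sees the structure of the characteristic monoids cannot be repaired.

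On part (iii), your construction rich~$\to$~enriched via piecewise linear functions and enriched~$\to$~rich via the divisorial log structure of a regular smoothing is exactly the paper's (Definition~\ref{def:assocenriched} and the surrounding propositions), but your proof that these are mutually inverse is not carried out. The paper does not need the deformation-theoretic comparison you call ``the hardest part'': surjectivity is essentially free, since by Definition~\ref{def:richoverk} every enriched structure already comes with a regular smoothing, and one checks that the log-side tuple $(\Lcal_v)$ is an enriched structure using Main\`o's intrinsic characterisation (restrictions to components plus triviality of the product) rather than by constructing a smoothing from the log data; injectivity is proved via Deligne--Faltings structures (Borne--Vistoli), showing that the line bundles with sections $(O_C(-\delta_{M,v}),1)$ determine the log structure. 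Your alternative ``equivariant map of torsors is bijective'' could in principle replace the injectivity step, but it requires identifying both sides as torsors under the \emph{same} torus and proving equivariance of the comparison map, neither of which your sketch establishes.
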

In \cite{biesel2019} the question is asked whether there is a natural smooth compactification of the stack $\Min(\RLC)$. In this paper, we answer that question affirmatively.
\begin{intheorem}[Theorem~\ref{thm:comp},Theorem~\ref{thm:smooth}]
\label{thm:mainarlc}
The stack $\Min(\RLC)/\Mf$ has a natural modular relative compactification $\Min(\ARLC)/\Mf$ consisting of \emph{weakly rich log curves}. The stack $\Min(\ARLC)$ is a smooth log blowup of $\Mf$ and the embedding $\Min(\RLC) \to \Min(\ARLC)$ is a map of log stacks.
\end{intheorem}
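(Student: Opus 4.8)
The plan is to translate the statement into the combinatorics of cones attached to tropical curves, exploiting the dictionary between logarithmic modifications of $\Mf$ and subdivisions of its associated cone stack (Artin fan). By Theorem~\ref{thm:mainrlc} the forgetful map $\Min(\RLC)\to\Mf$ is log \'etale with smooth source, and it is an isomorphism over the open locus of smooth curves; such a map is recorded by a subdivision of the cones $\sigma_G=\R_{\geq 0}^{E(G)}$ indexing the boundary strata of $\Mf$, where $E(G)$ is the set of nodes of the dual graph $G$. Richness being an open condition, $\Min(\RLC)$ corresponds to the \emph{open} part of this subdivision, which need not cover $\sigma_G$; the entire theorem then reduces to extending this partial fan to a complete, smooth subdivision $\Sigma$, functorially in $G$, and recognising the resulting modification $\Mtildes$ as a moduli stack of weakly rich log curves.

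First I would give the modular definition of a weakly rich log curve by relaxing the open richness condition to the corresponding limiting condition, arranged so that over a log point the weakly rich structures on a fixed curve are precisely the cones of a complete fan subdividing $\sigma_G$, with the rich structures recovered as the interiors. Second, using the minimal-objects formalism of Gillam (Section~\ref{subsec:gillam}), I would produce minimal weakly rich log curves and show that the fibered category $\ARLC$ over $\LogSch$ is represented by an algebraic stack with log structure $\Min(\ARLC)$; compatibility of the local fans with edge contractions and graph automorphisms assembles them into a global subdivision of the cone stack of $\Mf$, identifying $\Min(\ARLC)$ with the associated log modification $\Mtildes$. Third, completeness of the subdivision yields, via the valuative criterion phrased as surjectivity of tropicalisation, that $\Min(\ARLC)\to\Mf$ is proper, hence a relative compactification of $\Min(\RLC)$; unimodularity of the maximal cones gives that the modification is a \emph{smooth} log blowup, since a subdivision of a log smooth stack into smooth cones is again log smooth.

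Finally, because $\Min(\RLC)$ corresponds to the open sub-fan and $\Min(\ARLC)$ to its completion over the common base $\Mf$, the inclusion is an open immersion of the underlying stacks which is strict for the two pulled-back log structures, and therefore a morphism of log stacks. The principal obstacle is the construction of the subdivision itself: one must exhibit, compatibly across all contractions of the dual graph, a \emph{complete} refinement of the rich fan all of whose cones are \emph{smooth}, and crucially this refinement must remain \emph{modular} --- every cone must carry an actual weakly rich structure --- rather than being an arbitrary toric resolution of singularities. Securing simultaneous completeness, smoothness and modularity of $\Sigma$, using the explicit combinatorial description of enriched structures underlying Theorem~\ref{thm:mainrlc}, is the heart of the argument; the remaining representability and properness statements are then formal consequences of the fan-theoretic picture.
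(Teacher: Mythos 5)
Your overall strategy --- realise $\Min(\ARLC)$ as the log modification of $\Mf$ attached to a complete, unimodular, contraction-compatible subdivision of the cones $\R_{\geq 0}^{E(\Gamma)}$, with $\Min(\RLC)$ the open sub-fan --- is coherent, and it is exactly the tropical picture the paper draws in Section~\ref{sec:trop}. But the paper deliberately runs the logic in the opposite direction. It first defines weak richness intrinsically (over every strict geometric point, every cut of the tropical curve has a shortest edge), and then shows on a nuclear chart $\Uf$ of $\LC$ that the CFG of weakly rich curves satisfies the universal property of the log blowup of $\Uf$ in the explicit monoid ideal $I_\Gamma$ of Definition~\ref{def:igamma} (Proposition~\ref{def:igammaworks}), with compatibility under refinement reduced to the behaviour of cuts under edge contraction (Lemma~\ref{lem:graphcuts}, Proposition~\ref{prop:logidealscomp}). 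Exhibiting $\Min(\ARLC)$ as a blowup in an honest coherent ideal makes properness automatic (log blowups are proper), so no completeness-of-fan or valuative-criterion step is needed at all; the subdivision statement you want to start from (Theorem~\ref{thm:trop:subdivision}) is then derived by the paper as a corollary of Corollary~\ref{cor:blowup}, not used as an input.

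The genuine problem with your proposal is that everything you defer as ``the heart of the argument'' is precisely the mathematical content, and none of it is carried out. Concretely: (a) the subdivision is never constructed --- the paper's substitutes are the ideal $I_\Gamma$, or tropically the cones $\Mbar_f$ indexed by choice functions $f:\Cuts(\Gamma)\to E$; (b) unimodularity of the maximal cones, equivalently freeness of the characteristic monoids of minimal weakly rich curves, is asserted but not proved --- the paper needs a genuinely combinatorial result here (Lemma~\ref{lem:trees} and Corollary~\ref{cor:locfree}), and the statement is delicate: it holds \emph{only} for $r=1$, and the paper exhibits a counterexample showing $\Min(\ARLC_r)$ is singular for $r>1$, so any correct proof must use $r=1$ specifically, which your sketch nowhere does; (c) you treat the open immersion $\Min(\RLC)\to\Min(\ARLC)$ as a formal consequence, but the nontrivial point --- flagged in the paper as the main content of the theorem --- is that a minimal rich log curve is also \emph{minimal} as a weakly rich log curve; the paper proves this by a dedicated argument with sharp morphisms of characteristic monoids inside the proof of Theorem~\ref{thm:comp}. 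In your framework that fact is hidden inside the unproven identification of $\Min(\RLC)$ with the log stack of the open sub-fan, so it does not come for free either.
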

To illustrate the simplicity of our definitions, we directly give the definition of weakly rich log curves.
\begin{definition}
Let $C/S$ be a log curve with $S$ a geometric point, let $\Mbar_S$ be the characteristic monoid of the base, and let $\Gamma$ be the corresponding tropical curve. Then $C$ is called \emph{weakly rich} if for every cut\footnote{A \emph{cut} of a connected graph is a minimal set of edges that splits the graph into two connected components.} in the graph of $\Gamma$, there is a shortest edge in this cut. A log curve is weakly rich if it is weakly rich over all strict geometric points.
\end{definition}
\begin{example}
\label{ex:intro}
Consider the case where the underlying algebraic curve $\ul{C}/\ul{S}$ is given by three copies of $\P^1_{\ul{S}}$ with each pair intersecting once, and where $\ul{S}$ is a geometric point (see Figure~\ref{fig:sub-first}). The dual graph of $\ul{C}/\ul{S}$ is given in Figure~\ref{fig:sub-second}, with the edges labelled $E = \{e_1,e_2,e_3\}$. This graph has cuts $\{e_1,e_2\},\{e_1,e_3\},\{e_2,e_3\}$, as removing any two of the three edges cuts the graph into exactly two connected components. Then a log curve $C/S$ lying over $\ul{C}/\ul{S}$ induces three lengths $\ell(e_1),\ell(e_2),\ell(e_3)$ in the characteristic monoid $\Mbar_S$. By definition the curve $C/S$ is weakly rich if every cut of the dual graph has a smallest element. For this curve, that is equivalent to $\ell(e_1),\ell(e_2),\ell(e_3)$ being pairwise comparable. 

For example, the minimal log curve $C_0/S_0$ over $\ul{C}/\ul{S}$, which has $\Mbar_{S_0} = \bigoplus_{i=1}^3 \N\cdot e_i$ and $\ell(e_i) = e_i$, is not weakly rich. In contrast, a log curve $C_1/S_1$ having $\Mbar_{S_1} = \N^3$ and $\ell(e_1) = (1,0,0), \ell(e_2) = (1,1,0), \ell(e_3) = (1,1,1)$ is weakly rich since all of $(1,0,0),(1,1,0),(1,1,1)$ are comparable.

By Theorem~\ref{thm:mainarlc} the stack $\Min(\ARLC)_{S_0}/{S_0}$ is a log blowup of $S_0$. This log blowup corresponds, similarly to a toric blowup, to a subdivision of $\R_{\geq 0}^3$. This subdivision (or rather, an intersection of it with the plane defined by $a+b+c=1$) is shown in Figure~\ref{fig:sub-third}. This entire example is treated in more detail in Example~\ref{ex:subdiv}.

\begin{figure}[ht]
\begin{subfigure}{.32\textwidth}
  \centering
  \begin{tikzpicture}[thick,scale=0.5]

    
    
    \draw (20:4) -- (160:4);
    \draw (140:4) -- (280:4);
    \draw (260:4) -- (40:4);

	\end{tikzpicture}
  \caption{The curve $\ul{C}$}
  \label{fig:sub-first}
\end{subfigure}
\begin{subfigure}{.32\textwidth}
  \centering
  \begin{tikzpicture}[thick,scale=0.5]
    \begin{scope}[every node/.style={circle, draw,fill=black!50,inner sep=0pt, minimum width=4pt}]
    \node (A) at (90:3) {};
    \node (B) at (210:3) {};
    \node (C) at (330:3) {};
    \end{scope}

    \draw (A) -- (B);
    \draw (B) -- (C);
    \draw (C) -- (A);
    
    \node at (270:2) {$e_1$};
    \node at (150:2) {$e_2$};
    \node at (30:2) {$e_3$};
    hol

	\end{tikzpicture}
  \caption{The dual graph of $C$}
  \label{fig:sub-second}
\end{subfigure}
\begin{subfigure}{.32\textwidth}
  \centering
  \begin{tikzpicture}[thick,scale=0.5]
    \begin{scope}[every node/.style={circle, draw,fill=black!50,inner sep=0pt, minimum width=4pt}]
    \node (B) at (90:3) {};
    \node (C) at (210:3) {};
    \node (D) at (330:3) {};
    
    \draw (B) -- (C) node (BC) [midway] {};
    \draw (B) -- (D) node (BD) [midway] {};
    \draw (C) -- (D) node (CD) [midway] {};
    
    \draw (B) -- (CD);
    \draw (C) -- (BD);
    \draw (D) -- (BC);
    
    \node (A) at (0:0) {};
    \end{scope}

	\end{tikzpicture}
  \caption{The subdivision of $\R_{\geq 0}^3$ corresponding to $\Min(\ARLC)$ intersected with the plane $e_1 + e_2 + e_3 = 1$}
  \label{fig:sub-third}
\end{subfigure}
\caption{}
\label{fig:fig}
\end{figure}
\end{example}

In \cite[Section~5]{biesel2019}, the stack of enriched curves is studied as the first step in describing a universal N\'eron-model-admitting stack $\widetilde{\Mf}$ over the stack $\Mf$ of prestable curves. The stack $\widetilde{\Mf}$ is not quasi-compact, but can be written as a colimit of quasi-compact stacks $\bigcup_r \widetilde{\Mf}^{\leq r}$, with $\Mf^{\leq 1}$ coinciding with the stack of enriched curves (\cite[Theorem 6.9]{biesel2019}). We prove the following theorems on the stack $\widetilde{\Mf}$.
\begin{intheorem}[Theorem~\ref{thm:rich}, Theorem~\ref{thm:rlcsmoothalgebraic}, Lemma~\ref{lem:nmam}, Theorem~\ref{theorem:compbiesel}]
\label{thm:mainrrlc}
There is an infinite sequence of smooth algebraic log stacks $(\Min(\RLC_r))_{r \in \N_{\geq 1} \cup \{\infty\}}$ with $\Min(\RLC_1) = \Min(\RLC)$ such that \[\bigcup_{r\in\N_{\geq 1}} \Min(\RLC_r) = \Min(\RLC_\infty) \cong \widetilde{\Mf}.\] Here $\Min(\RLC_r)$ is the stack of \emph{$r$-rich log curves}. Under this isomorphism, the stack $\Min(\RLC)$ is naturally isomorphic to $\Mf^{\leq 1}/\Min(\LC)$.
\end{intheorem}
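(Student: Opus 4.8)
The plan is to treat the theorem as an assembly of four essentially independent pieces---local structure of each $\Min(\RLC_r)$, the tower and its colimit, the comparison with $\widetilde{\Mf}$, and the identification of the bottom level---and to reduce each piece either to the $r=1$ techniques already developed or to the combinatorial description of $\widetilde{\Mf}$ in \cite{biesel2019}. First I would fix the definition of an \emph{$r$-rich} log curve as a log curve whose tropicalization satisfies a combinatorial condition on the edge lengths $\ell(e)$ in the characteristic monoid $\Mbar_S$, chosen so that it specialises to ordinary richness when $r=1$ and weakens monotonically as $r$ grows. The key local point is that $r$-richness is an \emph{open} condition on $\Min(\LC)$: over a strict geometric point it is cut out by finitely many comparabilities among the $\ell(e)$, and these are stable under generisation. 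Granting this, $\Min(\RLC_r)$ is an open substack of $\Min(\LC)$, and since $\Min(\LC)$ is a smooth algebraic stack with log structure whose structure map to $\Mf$ is log \'etale, the same holds for every $\Min(\RLC_r)$. This is exactly the $r$-analogue of Theorem~\ref{thm:rich} and Theorem~\ref{thm:rlcsmoothalgebraic}, and I expect those proofs to transfer once openness is established.

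Next, monotonicity of the defining condition produces a chain of open immersions $\Min(\RLC_r)\hookrightarrow\Min(\RLC_{r+1})$, whose filtered colimit represents the subfunctor $\RLC_\infty\subseteq\LC$ of log curves that are $r$-rich for some $r$. This defines $\Min(\RLC_\infty)=\bigcup_r\Min(\RLC_r)$ and makes the equality $\bigcup_r\Min(\RLC_r)=\Min(\RLC_\infty)$ formal, together with $\Min(\RLC_1)=\Min(\RLC)$ by construction.

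The substantive step is the isomorphism $\Min(\RLC_\infty)\cong\widetilde{\Mf}$, which I would prove level by level. Biesel and Holmes build $\widetilde{\Mf}$ \'etale-locally from the combinatorics of the dual graph and then descend, exhibiting it as the colimit of the quasi-compact stacks $\widetilde{\Mf}^{\leq r}$ with $\widetilde{\Mf}^{\leq 1}=\Mf^{\leq 1}$ the stack of enriched curves \cite[Theorem~6.9]{biesel2019}. Using the minimality (Gillam) formalism, a map to $\widetilde{\Mf}$ is equivalent to the \'etale-local combinatorial data, while over a strict geometric point the minimal $r$-rich log structure encodes precisely that data via the bijection between rich log structures and enriched structures (and its $r$-rich refinement). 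Matching these identifications across the tower should give compatible isomorphisms $\Min(\RLC_r)\cong\widetilde{\Mf}^{\leq r}$ that pass to the colimit; this is the content of Lemma~\ref{lem:nmam} and Theorem~\ref{theorem:compbiesel}. I expect this to be the main obstacle: one must check that the tropical $r$-richness condition on $\Mbar_S$ corresponds exactly to the $r$-th step of the combinatorial filtration of \cite{biesel2019}, and that the resulting comparison is independent of the \'etale chart so that it descends and is compatible with the log structures. In practice this requires a careful dictionary translating the lengths $\ell(e)$ in the characteristic monoid into the integral data Biesel--Holmes attach to the dual graph.

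Finally, under this isomorphism the bottom of the tower $\Min(\RLC)=\Min(\RLC_1)$ is carried to $\widetilde{\Mf}^{\leq 1}=\Mf^{\leq 1}$. Since $\Min(\RLC)$ lives over $\Min(\LC)$ as a log stack while $\Mf^{\leq 1}$ is a bare stack over $\Mf$, I would translate between the two settings by pulling back along the forgetful map $\Min(\LC)\to\Mf$; this yields the stated natural isomorphism $\Min(\RLC)\cong\Mf^{\leq 1}/\Min(\LC)$, where the right-hand side denotes the base change $\Mf^{\leq 1}\times_{\Mf}\Min(\LC)$ equipped with its log structure. The nontrivial content here is that this base change agrees with the \emph{open} substack $\Min(\RLC)$, which encodes that the enriched condition is logarithmically open. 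As a consistency check, restricting to a single geometric fibre must recover the bijection with enriched curves of Corollary~\ref{cor:maino}.
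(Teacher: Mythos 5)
Your proposal rests on a premise the paper explicitly rules out: you claim that $r$-richness being cut out by comparabilities among the $\ell(e)$ makes $\Min(\RLC_r)$ an \emph{open substack} of $\Min(\LC)$, and you derive algebraicity and smoothness from that. This is false. What is true (Proposition~\ref{prop:richopen}) is that the $r$-rich locus of a \emph{fixed} log curve is open, i.e.\ $\Log(\RLC_r)\to\Log(\LC)$ is an open embedding; but passing to minimal objects changes the log structure (for the $2$-gon the minimal log structure has $\Mbar=\N^2$ while the minimal rich structure has $\Mbar=\N$), and this introduces new moduli: by Example~\ref{ex:richalgclosed} the fibre of $\Min(\RLC_1)\to\Min(\LC)$ over a boundary point is a torsor under $\G_m^E/\G_m^T$, so the map is not even a monomorphism, let alone an open immersion. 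The paper instead obtains algebraicity by embedding $\Min(\RLC_r)$ as an open inside the log blowup $\Min(\ARLC_r)$ of $\Min(\LC)$ (Theorem~\ref{thm:comp}), and smoothness separately from the fact that minimal $r$-rich curves have \emph{free} characteristic monoid $\N^T$ (Proposition~\ref{def:richminimal}) together with Ogus III.3.3.1; note that for $r>1$ the compactification $\Min(\ARLC_r)$ is \emph{not} smooth, so smoothness genuinely cannot be inherited from an ambient stack. A smaller but real error: the inclusions run along \emph{divisibility}, $\RLC_r\subset\RLC_{r'}$ only for $r\mid r'$ (Lemma~\ref{lem:covariancer}), so your chain $\Min(\RLC_r)\hookrightarrow\Min(\RLC_{r+1})$ does not exist; the union over $r\in\N_{\geq 1}$ is a directed colimit over the divisibility poset.

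Your strategy for the comparison with $\widetilde{\Mf}$ is also not viable as stated. You propose level-by-level isomorphisms $\Min(\RLC_r)\cong\widetilde{\Mf}^{\leq r}$, but the two filtrations are defined differently: $r$-richness requires edge labels of the form $\mu_i a$ with $\mu_i\mid r$, whereas Biesel--Holmes' $r$-strong alignment allows $1\leq\lambda_i\leq r$ (see the remark following Proposition~\ref{lem-eq-defs-richness}), so the intermediate levels do not match (already for $r=3$), and the theorem accordingly only asserts the comparison at $r=1$ and $r=\infty$. The paper's actual argument avoids any combinatorial dictionary with the Biesel--Holmes charts: it uses the universal property of $\widetilde{\Mcal}$ as the universal N\'eron-model-admitting morphism. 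Lemma~\ref{lem:nmam} verifies that $\Min(\RLC_r)\to\Min(\LC)$ is N\'eron-model-admitting (regularity via log regularity plus locally free characteristic monoids, density of the smooth locus, and existence of separated N\'eron models via the log-regular-base results and the separatedness criterion of \cite{poiret2020}), which produces the map to $\widetilde{\Mcal}$, and for $r=1$ a check that the universal curve is regular gives the factorisation through $\widetilde{\Mcal}^{\leq 1}$. The inverse maps come from the theorem that N\'eron-model-admitting morphisms are aligned, so $j_*\Ocal_U$ equips $\widetilde{\Mcal}$ with an $\infty$-rich log structure; one then concludes both composites are the identity by rigidity (a self-map of a regular separated stack that is the identity on a dense open is the identity). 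If you want to salvage your approach, you would at minimum need to replace the level-by-level matching by mutual cofinality of the two filtrations, and you would still need an actual mechanism---such as the universal property above---to produce the maps in both directions.
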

\begin{intheorem}[Theorem~\ref{thm:comp}]
\label{thm:mainarrlc}
For $r <\infty$ the stack $\Min(\RLC_r)/\Mf$ has a natural modular relative compactification $\Min(\ARLC_r)$ consisting of \emph{weakly $r$-rich log curves}. The stack $\Min(\ARLC_r)$ is a log blowup of the stack $\mathfrak{M}$ and the embedding $\Min(\RLC_r) \to \Min(\ARLC_r)$ is a map of log stacks.
\end{intheorem}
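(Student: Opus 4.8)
The plan is to reduce the statement to a purely combinatorial assertion about subdivisions of the cone stack (tropical moduli) of $\Mf$, and then invoke the standard dictionary between subdivisions of such cone stacks and log blowups. Recall that $\Mf$ carries its canonical log structure, and its associated cone stack $\Sigma_{\Mf}$ has cones $\sigma_\Gamma \cong \R_{\geq 0}^{E(\Gamma)}$ indexed by dual graphs $\Gamma$, glued along the face maps induced by edge contractions and graph automorphisms; a point of $\sigma_\Gamma$ is precisely an assignment of edge lengths in $\R_{\geq 0}$, i.e. a metric on $\Gamma$. Since weak $r$-richness is a condition on these edge lengths (on the tropical curve), the first step is to show that, cone by cone, it determines a rational polyhedral subdivision of $\sigma_\Gamma$.

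Concretely, for a fixed $\Gamma$ I would define, for each combinatorial datum compatible with the cut structure (which cut has which shortest edge, together with whatever refined data records the $r$-rich type), the locus in $\sigma_\Gamma$ on which that datum is realized; each such locus is cut out by finitely many linear comparisons $\ell(e) \leq \ell(e')$ and is therefore a rational polyhedral cone, and together these cover the weakly $r$-rich locus. The first thing to verify is that they form a genuine fan subdivision $\Sigma_\Gamma^r$ of $\sigma_\Gamma$: that distinct maximal cones meet along common faces, and — crucially for $r<\infty$ — that the subdivision is \emph{finite}. Finiteness is exactly where the hypothesis $r<\infty$ enters, and it is what fails in the limit $r=\infty$, consistent with $\Min(\RLC_\infty)\cong\widetilde{\Mf}$ being non-quasi-compact.

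Next I would verify functoriality: the subdivisions $\Sigma_\Gamma^r$ must be compatible with the face maps of $\Sigma_{\Mf}$, i.e. with edge contractions and with automorphisms of $\Gamma$, so that they glue to a subdivision $\Sigma^r$ of the whole cone stack. This is the combinatorial heart of the argument and the step I expect to be the main obstacle: contracting an edge alters the set of cuts in a nontrivial way, and one must check that the restriction of the subdivision along a face inclusion agrees with the subdivision defined intrinsically on the smaller cone. The weak $r$-richness condition is designed so that ``having a shortest edge in each cut'' is stable under the relevant degenerations, but making this precise requires a careful analysis of how cuts and the $r$-rich type transform under contraction. Granting functoriality, the general correspondence between subdivisions of the cone stack of a log smooth stack and log modifications yields a log blowup $\Min(\ARLC_r)\to\Mf$ associated to $\Sigma^r$; being a log blowup, it is proper and log \'etale over $\Mf$, which gives the relative (over $\Mf$) properness, hence the compactification.

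Finally, for the modular interpretation I would match the universal property of this log blowup with the fibered category of weakly $r$-rich log curves over $\LogSch$: by the universal property of a log modification, a map $S\to\Min(\ARLC_r)$ over $S\to\Mf$ is the same as a lift of the tropicalization of $S$ to $\Sigma^r$, which is exactly the data making the pulled-back log curve weakly $r$-rich, and minimality then singles out $\Min(\ARLC_r)$ as the representing stack. The embedding $\Min(\RLC_r)\hookrightarrow\Min(\ARLC_r)$ is the inclusion of the open substack cut out by the non-degenerate richness condition, and since both stacks are log \'etale over $\Mf$ (Theorem~\ref{thm:mainrrlc}) it is a strict open immersion, in particular a map of log stacks. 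I would deliberately not attempt to prove smoothness of $\Min(\ARLC_r)$, since for $r>1$ the subdivision $\Sigma^r$ need not be smooth; this is precisely why the statement, unlike the $r=1$ case of Theorem~\ref{thm:mainarlc}, asserts only a log blowup and not a smooth one.
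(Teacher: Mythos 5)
Your overall architecture---subdivide the cone stack of $\Mf$ cone by cone according to the weak $r$-richness comparisons, check compatibility with contractions and automorphisms, then translate back into log geometry---runs parallel to the paper's proof, which works dually with explicit monoid ideals on nuclear charts. But there is a genuine gap at the translation step: a subdivision of the cone stack corresponds in general only to a \emph{log modification} of $\Mf$, not to a log blowup. Log blowups are precisely the \emph{projective} subdivisions, those induced by a coherent sheaf of monoid ideals (equivalently by a relatively convex piecewise linear function), and non-projective subdivisions exist already for three-dimensional cones. Since the theorem asserts that $\Min(\ARLC_r)$ \emph{is} a log blowup of $\Mf$, invoking ``the general correspondence'' does not prove it. The paper closes exactly this hole by exhibiting the ideal: on a nuclear chart with graph $\Gamma$ it blows up $I_\Gamma = \prod_{c \in \Cuts(\Gamma)} \prod_{\lambda_i \mid r} (\ell(e_1)^{\lambda_1},\dots,\ell(e_n)^{\lambda_n})$ (Definition~\ref{def:igamma}), shows that factoring through this blowup is equivalent to weak $r$-richness on strict geometric points (Proposition~\ref{def:igammaworks}), and shows the ideals pull back to one another under refinement (Proposition~\ref{prop:logidealscomp}); that last step rests on precisely the cut--contraction statement you flagged as the main obstacle but did not prove, namely Lemma~\ref{lem:graphcuts}: $\Cuts(\Gamma') = \{c \in \Cuts(\Gamma) : c \cap S = \varnothing\}$, cuts meeting a contracted edge contributing the unit ideal. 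Your subdivision is in fact the non-linearity locus of the min-functions attached to these ideals, hence projective, but saying so essentially amounts to writing down $I_\Gamma$, which is the content your proposal omits.

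The second gap is in the final step: your claim that $\Min(\RLC_r) \to \Min(\ARLC_r)$ is a strict open immersion ``since both stacks are log \'etale over $\Mf$'' is invalid, and it proves too much. Indeed $\RLC_{r'} \subset \ARLC_r$ holds for \emph{every} $r'$ (Lemma~\ref{lem:rlctoarlc}), each $\Min(\RLC_{r'})$ is log \'etale over $\Mf$, and each inclusion is log open by Proposition~\ref{prop:richopen}; yet the induced map $\Min(\RLC_{r'}) \to \Min(\ARLC_r)$ fails to be an open immersion unless $r' = r$ (see the remark following Lemma~\ref{lem:rlctoarlc}). Log \'etaleness of source and target over a base never forces a map between them to be strict---log blowups are themselves log \'etale. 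The real issue is that $\RLC_r$ and $\ARLC_r$ carry different minimality notions: the natural map sends a minimal $r$-rich log curve to the minimal weakly $r$-rich log curve lying under it, and openness of $\Log(\RLC_r) \subset \Log(\ARLC_r)$ does not transfer to the minimal stacks unless that map is strict. The paper proves strictness by a monoid computation showing that a minimal (basic) $r$-rich log structure is already a minimal weakly $r$-rich log structure, using that the same divisors $\lambda \mid r$ govern both definitions; the paper itself calls this ``the main content'' of the theorem. Without this step, neither the open immersion nor the assertion that the embedding is a map of log stacks is established.
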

This gives $\Mf$ as an infinite union of quasi-compact stacks, with a compactification known for each step. Previously an explicit modular compactification of $\widetilde{\Mf}^{\leq r}$ was only known for $r = 1$.

\section{Conventions and notation}
There are multiple notions of log curves and tropical curves, and log stacks. In this section we explain our conventions, and we introduce our notation for the stacks of (minimal) (weakly) ($r$-) rich curves we will be defining in this paper. For a general reference on log schemes, we refer to \cite{ogus}.

For us, a log scheme $X$ consists of a scheme $\ul{X}$ and a sheaf of monoids $M_X$ on its small \'etale site together with a monoid map $\alpha: M_X \to \Ocal_{\ul X}$ such that the natural map $\alpha^* \Ocal_{\ul X}^\times \to \Ocal_{\ul X}^\times$ is an isomorphism. As usual, we denote the characteristic sheaf $M_X/\Ocal_X^\times$ by $\Mbar_X$. 

All log structures/log schemes will be fine and saturated (fs), meaning the sheaf of monoids is locally modelled on fine (finitely generated and integral), saturated monoids. In particular, all of our log blowups are saturated.

Of special interest to us will be log curves $C/S$. For a general reference, see \cite{kato2000}. A \emph{log curve} is a map $\pi: C \to S$ of log schemes such that the underlying algebraic map is a prestable curve and $\pi$ is log smooth and integral, and $C/S$ is vertical (no markings).
In the case where $S$ is a geometric point, the log structure becomes very explicit on the level of characteristic monoids, as follows. Letting $E$ denote the set of nodes (or equivalently, the set of edges of the dual graph), the log structure on $S$ will be a monoid $M$ over $k$ together with a monoid map $\N^E \to M$. Then the log structure on $C$ is uniquely determined by these data, as in \cite[Table~1.8]{kato2000}. We often call the image of $e \in \N^E$ in the monoid $\Mbar$ the \emph{length} or \emph{weight} of edge $e$.

Related is that for a prestable curve $\ul{C}/\ul{S}$, there is a unique initial log curve structure we can put on it (in the category of log curve structures on $\ul{C}/\ul{S}$). In the case where $\ul{S}$ is a geometric point, then this $C/S$ will satisfy that $\Mbar_S = \N^E$ where $E$ is the set of edges of the dual graph of $\ul{C}$. The length of the $e$th edge will then be the $e$th basis vector.

Finally, we discuss what tropical curves are for us. Fix a log smooth curve $C/S$, and let $s$ be a strict geometric point of $S$. Then $C_{\bar{s}}/\bar{s}$ has a dual graph $(V,E)$, a characteristic monoid $\Mbar_S$, and a metrisation, namely a length function $E \to \Mbar_S$ (with the image of an edge $e$ also called the length of an edge). This triple of data $((V,E),\Mbar_S,E \to \Mbar_S)$ is known as an \emph{abstract tropical curve}, and it is the tropical curve of $C$ at $s$, also denoted by $\Cf_s$. Furthermore, if we have an \'etale specialisation between two geometric points $s$ and $t$ (see \cite[Appendix~A]{cavalieri2020}), roughly meaning that $t$ lies in the closure of $s$, then we get contraction maps $\Cf_t \rightsquigarrow \Cf_s$, contracting some of the edges. The data of all these tropical curves at geometric points together with the contractions is known as the tropical curve of $C$, also denoted as $\Cf$. For a further reference to this, see Definition~2.3.3.3 of \cite{molcho18}.

\begin{remark}
A more common definition of an abstract tropical curve is a graph metrised in $\R_{\geq 0}$ (for an example, see Definition~2.13 of \cite{chan16}). One can see the more general definition of metrising in some sharp monoid $\Mbar$ as giving a family of $\R_{\geq 0}$-metrised graphs, parametrised by $\Hom(\Mbar,\R_{\geq 0})$. This perspective is worked out in Section~\ref{sec:trop}.
\end{remark}{}

\subsection{Log stacks}
\label{subsec:gillam}
In this paper we deal with stacks over schemes and stacks over log schemes. For an in depth treatment of the relation, we refer to \cite{gillam2011}. In this subsection, we treat a few of their definitions and results.
We denote by $\LogSch$ the category of all log schemes, and by $\Log$ the wide subcategory of log schemes with strict maps. This latter category is in fact a CFG (category fibred in groupoids) over $\Sch$.
\begin{definition}
A log CFG is a CFG $\ul{X}/\Sch$ together with a CFG map $M: \ul{X} \to \Log$ over $\Sch$. Such a log CFG $X = (\ul{X},M)$ induces a CFG $Y$ over $\LogSch$ by having as objects pairs $(x \in \ul{X},f: T \to M(x))$ where $\ul{f} : \ul{T} \to \ul{M(x)}$ is the identity on $\ul{T}$. We say that $X$, together with its log structure, represents $Y$, and denote this by $X = \Min(Y)$.
\end{definition}
\begin{example}
If we consider the CFG $\Sch_{\ul{S}}$ of schemes over $\ul{S}$ for some scheme $\ul{S}$, then a log structure $M: \Sch_{\ul{S}} \to \Log$ consists of lifting $\ul{S}$ to a log scheme $S$. The induced CFG over $\LogSch$ is then simply $\LogSch_{S}$, and $\Min(\LogSch_S)$ is the stack $\Sch_{\ul{S}}$ together with its log structure.
\end{example}

\begin{definition}
\label{def:gillam:log}
Let $X$ be a CFG over $\LogSch$. Then we denote by $\Log(X)$ the CFG over $\Sch$ consisting of the same objects as $X$, but only maps lying over a strict map.
\end{definition}
\begin{definition}
\label{def:logopen}
Let $f: X \to Y$ be a map in $\CFG/\LogSch$, with $X,Y$ representable by log CFG's. Then $f$ is an \emph{open immersion}, if the resulting map $\Min(X) \to \Min(Y)$ is an open immersion. It is \emph{log open} if the resulting map $\Log(X) \to \Log(Y)$ is an open embedding.
\end{definition}
\begin{remark}
This latter definition mimics many propositions about other log properties. For example, a map $X \to Y$ of log schemes is log \'etale if and only if $\Log(X) \to \Log(Y)$ is \'etale (\cite[Theorem~4.6]{olsson}).
\end{remark}

We will define two CFGs $\RLC$ and $\ARLC$ and a functor $f: \RLC \to \ARLC$ between them. We will then want to show that
\begin{enumerate}
  \item both $\RLC$ and $\ARLC$ are represented by log CFGs, and
  \item the map on log CFGs induced by $f$ is an open immersion.
\end{enumerate}
For this, the notion of ``minimal objects'' of a CFG over $\LogSch$, as developed by Gillam in \cite{gillam2011} will play an important role. This notion will allow us to explicitly describe the representing log CFGs.

\begin{definition}[\cite{gillam2011}]
\label{def:min} Let $F: X \to \LogSch$ be a CFG. Then an object $z \in X$ is \emph{minimal} if for every diagram
\[\begin{tikzcd}
w_1 &        & z \\
  & w_2 \arrow["i",ru] \arrow["j",lu] &  \\
\end{tikzcd}\]
with $\ul{Fi}$ and $\ul{Fj}$ being the identity on the underlying schemes, there is a unique map $w_1 \to z$ making the diagram commute.

\end{definition}
\begin{example}
Let $S$ be a log scheme. In the category $\LogSch_S$, the minimal objects are the strict maps $T \to S$, and hence correspond to maps of schemes $\ul{T} \to \ul{S}$. More generally, if our CFG $C/\LogSch$ is represented by a log CFG $(\ul{C},M)$, then the minimal objects are pairs $(c \in \ul{C},f: X \to M(c))$ where $f$ is an isomorphism.
\end{example}

\begin{theorem}[\cite{gillam2011}]
\label{thm:gillam}
Let $F:X \to \LogSch$ be a CFG. It is represented by a log CFG if and only it satisfies both of the following two conditions.
\begin{enumerate}
	\item For every $w \in C$, there is a minimal object $z$ and a map $i: w \to z$ with $\ul{Fi}$ the identity on the underlying schemes.
	\item For every minimal object $z$, for every map $i : w \to z$, we have that $w$ is minimal if and only if $Fi$ is strict. 
\end{enumerate}
Furthermore, if this is the case $C$ is represented by the stack $\Min(C)$ of minimal objects in $C$, with its natural log structure.
\end{theorem}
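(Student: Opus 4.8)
The plan is to prove both implications directly from the universal property in Definition~\ref{def:min}, obtaining the ``furthermore'' for free from the reverse direction. Two facts will be used repeatedly: since $X \to \LogSch$ and (in the reverse direction) the candidate $\ul X \to \Sch$ are fibred in groupoids, any morphism lying over an identity is an isomorphism; and the minimal object receiving a given object is essentially unique. For the latter, if $z_1, z_2$ are minimal and admit $i_1 \colon w \to z_1$, $i_2 \colon w \to z_2$ with $\ul{F i_1}, \ul{F i_2}$ identities, then minimality of $z_2$ applied to the span $z_1 \xleftarrow{i_1} w \xrightarrow{i_2} z_2$ and of $z_1$ applied to $z_2 \xleftarrow{i_2} w \xrightarrow{i_1} z_1$ yields maps $a \colon z_1 \to z_2$, $b \colon z_2 \to z_1$ compatible with $i_1, i_2$; applying the uniqueness clause to the degenerate spans $z_a \xleftarrow{i_a} w \xrightarrow{i_a} z_a$ forces $ba = \id$ and $ab = \id$, so $z_1 \cong z_2$ canonically. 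This is what will make the comparison functors below well defined.

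For the forward implication, suppose $X$ is represented by a log CFG $(\ul X, M)$, so its objects are pairs $(x, f \colon T \to M(x))$ with $\ul f = \id$, and a morphism over $h \colon T \to T'$ is a map $g \colon x \to x'$ in $\ul X$ over $\ul h$ satisfying $f' \circ h = M(g) \circ f$. I would first verify the Example's claim that the minimal objects are exactly those with $f$ an isomorphism: a span into such a pair over identity schemes consists of maps in $\ul X$ over identities, hence isomorphisms, which forces a unique factorisation; conversely a non-isomorphic $f$ fails the lifting property. Condition~(1) is then immediate, sending $(x, f)$ to the tautological minimal object $(x, \id_{M(x)})$ by the morphism $(\id_x, f)$, whose image under $F$ is $f$ and so is an identity on underlying schemes. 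For Condition~(2), with $z = (x', f')$ minimal (so $f'$ an isomorphism), I would compare characteristic monoids to show that for $i \colon w \to z$ the log map $Fi$ is strict precisely when the structure map of $w$ is an isomorphism, i.e. precisely when $w$ is minimal.

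For the reverse implication, assume $X$ satisfies~(1) and~(2). Set $\ul X := \Min(X)$, the full subcategory of minimal objects, with structure functor $z \mapsto \ul{F(z)}$ to $\Sch$. Condition~(2) does the work here twice. First, any morphism $g \colon z \to z'$ between minimal objects has $Fg$ strict (take $w = z$ in~(2)), so $M := F|_{\Min(X)}$ factors through $\Log$ and $(\Min(X), M)$ is a candidate log CFG. Second, the strict pullback in $\LogSch$ of a minimal object along a map of underlying schemes is again minimal, since its projection to the original object is strict. Using this I would build cartesian lifts in $\Min(X) \to \Sch$: given $\ul\phi \colon \ul T \to \ul{F(z')}$, form the strict pullback $T \to F(z')$ and a cartesian lift $z_T \to z'$ in $X$; it lies in $\Min(X)$, and is cartesian there because any competing scheme map can be promoted to a log map into $T$ (as $T$ carries exactly the pulled-back log structure), after which cartesianness in $X$ supplies the required factorisation. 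Thus $\Min(X) \to \Sch$ is a CFG and $(\Min(X), M)$ a log CFG.

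Finally I would identify the CFG $Y$ over $\LogSch$ induced by $(\Min(X), M)$ with $X$ itself. Define $\Psi \colon X \to Y$ by $w \mapsto (z, Fi)$, where $i \colon w \to z$ is the map to a minimal object over an identity of schemes supplied by Condition~(1); the essential-uniqueness lemma makes this well defined and functorial. Define $\Phi \colon Y \to X$ by $(z, f) \mapsto f^* z$. Then $\Psi\Phi \cong \id$ since $f^* z \to z$ is cartesian over the scheme-identity $f$ with $z$ minimal, so $z$ is the target $\Psi$ selects; and $\Phi\Psi \cong \id$ since the chosen $i \colon w \to z$ factors through the cartesian $f^* z \to z$ by a map over an identity, hence an isomorphism. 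This gives $X \cong \Min(X)$ with its natural log structure, proving both representability and the ``furthermore''. I expect the main obstacle to be precisely the verification that $\Min(X) \to \Sch$ is a CFG with the expected cartesian arrows: the subtle point is that a morphism of minimal objects lives \emph{a priori} only over a log map, and one must combine the forced strictness from Condition~(2) with the pulled-back log structure on the strict pullback to lift competing scheme maps and conclude cartesianness. The surrounding $2$-categorical bookkeeping is routine once the essential-uniqueness of minimal targets is established.
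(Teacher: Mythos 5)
A preliminary remark: the paper contains no proof of this statement — it is imported wholesale from \cite{gillam2011} — so your proposal can only be judged on its own merits against the standard argument. Most of it is sound. The essential-uniqueness lemma for minimal targets is correct and correctly proved; the identification of the minimal objects of a represented CFG with the pairs $(x,f)$ having $f$ an isomorphism is right; the forward direction goes through (the key point, which you leave implicit, is that morphisms in $\Log$ are strict by definition, so $M(g)$ is strict and strictness of $Fi$ transfers to the structure map of $w$); and in the reverse direction, the observations that condition~(2) forces morphisms between minimal objects to be strict, that strict pullbacks of minimal objects are again minimal, and that this makes $\Min(X)\to\Sch$ a CFG with $M=F|_{\Min(X)}$ landing in $\Log$, are all correct, including the subtle uniqueness point you flag for cartesian lifts.

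The genuine gap is in your final paragraph, in the sentence ``the essential-uniqueness lemma makes this well defined and functorial.'' Essential uniqueness only governs spans $z_1 \leftarrow w \rightarrow z_2$ in which \emph{both} legs lie over identities of schemes. To make $\Psi$ a functor you must, for an arbitrary morphism $\alpha\colon w\to w'$ of $X$ (lying over an arbitrary, generally non-strict log map), produce a morphism $g\colon z\to z'$ between the chosen minimal targets with $g\circ i = i'\circ\alpha$; here the span $z \xleftarrow{i} w \xrightarrow{i'\circ\alpha} z'$ has one leg over $\ul{F\alpha}$, which is not an identity, so the definition of minimality does not apply to it. The same issue reappears as full faithfulness of $\Phi$, so you cannot sidestep it by working with $\Phi$ alone; it is precisely where conditions~(1) and~(2) interact, and it is the real content of the equivalence between $X$ and the CFG induced by $(\Min(X),M)$ — not $2$-categorical bookkeeping. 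It can be filled with the trick you already deploy for cartesianness: form the strict pullback $T''\to F(z')$ of $F(z')$ along $\ul{F\alpha}$, take a cartesian lift $z''\to z'$ in $X$ (minimal by condition~(2)), factor $i'\circ\alpha$ through $z''\to z'$ using cartesianness in $X$ (the factoring map $w\to z''$ then lies over an identity of schemes), and apply minimality of $z''$ to the span $z\leftarrow w\to z''$; composing the filler with $z''\to z'$ gives $g$, and a parallel argument gives its uniqueness, which in turn yields functoriality and naturality. Until this step is supplied, neither the representability claim nor the ``furthermore'' is actually established.
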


We often consider the log stack $\LC$ of log smooth curves, and the stack of prestable curves $\Mf$. We can give the latter a log structure coming from the boundary divisor of singular curves. Then we have the following theorem.
\begin{theorem}[Theorem~4.5,\hspace{2pt}\cite{kato2000}]
The stack $\LC/\LogSch$ is representable, and the stack with $\Mf$ with its natural log structure is naturally identified with $\Min(\LC)$.
\end{theorem}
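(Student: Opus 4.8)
The plan is to deduce both assertions from Gillam's representability criterion, Theorem~\ref{thm:gillam}: I would verify that the CFG $\LC \to \LogSch$ satisfies its two conditions and, at the same time, identify the minimal objects with prestable curves carrying their boundary log structure. The candidate minimal object over $\ul C/\ul S$ is the initial log curve structure, whose characteristic monoid over a geometric point is $\N^E$ (with $E$ the set of nodes) and whose length function sends each edge to its basis vector; globally this is the pullback of the divisorial log structure of $\Mf$ along the classifying map $\ul S \to \Mf$. Since algebraicity of $\Mf$ is classical, the word ``representable'' here means ``represented by a log CFG'' in the sense of the preceding definitions, and the genuine work is the verification of Gillam's two conditions.

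The key input is F.\ Kato's structure theory, \cite[Table~1.8]{kato2000} together with its underlying deformation analysis. I would first recall that over a geometric point $S$ with nodes $E$ a log smooth curve $C/S$ lying over a fixed $\ul C/\ul S$ is completely determined by the base log structure $M_S$ together with the length map $\N^E \to \Mbar_S$, equivalently by a smoothing parameter $\rho_e \in M_S$ at each node (well defined up to a unit), with $C$ \'etale locally near the $e$th node the standard nodal model $xy = \alpha(\rho_e)$. This explicit description is exactly what allows me to compare an arbitrary log curve with the minimal one: the length map $\N^E \to \Mbar_S$ is precisely a map out of the characteristic monoid $\N^E$ of the candidate minimal object, and the lifts $\rho_e$ upgrade it to a map of honest log structures $M^{\min} \to M_S$.

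For Condition~1, given any log smooth curve $C/S$ I would pull back the boundary log structure along $\ul S \to \Mf$ to get $M^{\min}$ on $\ul S$ and the minimal log curve $C^{\min}/S^{\min}$, and then use the smoothing parameters of $C/S$ to produce the canonical map $M^{\min}\to M_S$; Kato's classification guarantees that $C/S$ is recovered as the base change of $C^{\min}$ along this map, yielding a morphism $i:(C/S)\to(C^{\min}/S^{\min})$ in $\LC$ over the identity of $\ul S$. For Condition~2, a morphism $i$ with minimal target has underlying base map $M^{\min}\to M_S$, and $Fi$ is strict precisely when this is an isomorphism, i.e.\ when $M_S$ is itself the boundary log structure; I would then check that this is equivalent to minimality in the sense of Definition~\ref{def:min}, the universal property being that the length map out of $\N^E$ factors any identity-on-underlying-scheme comparison uniquely. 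Applying Theorem~\ref{thm:gillam} yields representability and identifies $\Min(\LC)$ with $\Mf$ equipped with its boundary log structure.

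The main obstacle is not the formal bookkeeping but the structural input from Kato: one must know that the only log smooth structures on a family of prestable curves near a node are the standard smoothings parametrised by a single smoothing parameter in $M_S$, with no further moduli. This is exactly the content of the deformation-theoretic analysis in \cite{kato2000}, which I would cite rather than reprove; once it is granted, the verification of Gillam's conditions and the identification of the minimal objects with $\Mf$ carrying its boundary log structure is essentially combinatorial.
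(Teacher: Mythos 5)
The paper does not prove this statement at all: it is imported verbatim as Theorem~4.5 of \cite{kato2000} (recast in Gillam's language via \cite{gillam2011}), so there is no internal proof to compare against. Your sketch is a correct outline of the standard argument --- Kato's structure theory of log smooth curves near nodes, which you rightly cite as the one non-formal input, combined with verification of the two conditions of Theorem~\ref{thm:gillam} --- and it is essentially the same strategy the paper itself deploys for the analogous representability of $\RLC_r$ in Appendix~\ref{sec:appendix}, where basic objects are shown to be weakly terminal (Proposition~\ref{app:prop:richbasicinitial}), hence exactly the minimal ones (Lemma~\ref{app:lem:weaklyterminalminimal}), before applying Gillam's descent lemma.
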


When the base is unspecified, we work over $\Spec \Z$. The reader who prefers to work over $\C$ instead, can base change everything to $\Spec \C$ where all of our results will still hold.

\section{A tropical story}
\label{sec:trop}
In this section we give an overview of the main ideas of this paper written solely in the language of real tropical curves. We start with recalling some definitions about real tropical curves.

\begin{definition}
A \emph{real tropical curve} is a graph $\Gamma = (V,E)$ together with a length function $E \to \R_{>0}$. We denote by $M_\Gamma = \R_{> 0}^E$ the moduli space of real tropical curve structures on a fixed graph $\Gamma$.
\end{definition}
These moduli spaces fit together in families, as we will shortly see. Note that for a contraction of graphs $\Gamma \rightsquigarrow \Gamma'$, we have an inclusion of edges $E(\Gamma') \to E(\Gamma)$ and hence we can consider both $M_{\Gamma'}$ and $M_\Gamma$ as subspaces of $\R_{\geq 0}^E$.
\begin{definition}
Fix $\Gamma$ a graph. Denote $\Mbar_\Gamma = \R_{\geq 0}^{E(\Gamma)}$.
\end{definition}
\begin{proposition}
We have that $\Mbar_\Gamma = \bigsqcup_{\Gamma \rightsquigarrow \Gamma'} M_{\Gamma'}$ as sets.
\end{proposition}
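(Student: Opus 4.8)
The plan is to reduce the statement to the elementary coordinatewise partition $\R_{\geq 0} = \{0\} \sqcup \R_{>0}$, once the index set of the disjoint union has been matched up with the power set of $E(\Gamma)$. The only genuine content is this bookkeeping; everything else is then immediate.

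First I would record the bijection between contractions of $\Gamma$ and subsets of its edge set. A contraction $\Gamma \rightsquigarrow \Gamma'$ is obtained by contracting a set of edges $Z \subseteq E(\Gamma)$, and it remembers this set via the inclusion $E(\Gamma') \to E(\Gamma)$ noted just before the statement, so that $E(\Gamma') = E(\Gamma) \setminus Z$. Conversely, any subset $Z \subseteq E(\Gamma)$ yields a contraction $\Gamma \rightsquigarrow \Gamma/Z$ (contracting a loop is permitted, merely raising the genus of a vertex), and distinct subsets give distinct contractions. Hence the indexing set $\{\Gamma \rightsquigarrow \Gamma'\}$ is canonically the power set of $E(\Gamma)$, and under this identification $M_{\Gamma'} = \R_{>0}^{E(\Gamma')} = \R_{>0}^{E(\Gamma) \setminus Z}$.

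Next I would make explicit the inclusion $M_{\Gamma'} \hookrightarrow \Mbar_\Gamma$ used implicitly in the statement: a length function on $E(\Gamma')$ extends to a point of $\R_{\geq 0}^{E(\Gamma)}$ by assigning length $0$ to each contracted edge $e \in Z$. The image of this inclusion is precisely the set of $x \in \R_{\geq 0}^{E(\Gamma)}$ whose vanishing locus $\{e : x_e = 0\}$ equals $Z$, equivalently whose support equals $E(\Gamma) \setminus Z = E(\Gamma')$.

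Finally I would verify that these images form a disjoint cover of $\Mbar_\Gamma$. Any point $x \in \R_{\geq 0}^{E(\Gamma)}$ has a well-defined vanishing locus $Z(x) = \{e : x_e = 0\} \subseteq E(\Gamma)$, so $x$ lies in the image of $M_{\Gamma / Z(x)}$; and it lies in no other $M_{\Gamma'}$, since images attached to distinct subsets $Z$ consist of points with distinct vanishing loci and are therefore disjoint. This is exactly the distribution of the product $\prod_{e \in E(\Gamma)}\bigl(\{0\} \sqcup \R_{>0}\bigr)$ over the subsets $Z \subseteq E(\Gamma)$, which gives the claimed equality of sets. The main obstacle, mild as it is, is the first step: pinning down that contractions $\Gamma \rightsquigarrow \Gamma'$ are indexed cleanly by edge subsets rather than by isomorphism classes of $\Gamma'$, since it is the remembered contracted set $Z$, not $\Gamma'$ up to isomorphism, that makes the union disjoint.
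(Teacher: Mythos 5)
Your proof is correct. The paper states this proposition without any proof, treating it as immediate, and your stratification-by-support argument --- including the key bookkeeping point that the disjoint union is indexed by contracted edge subsets $Z \subseteq E(\Gamma)$ rather than by isomorphism classes of $\Gamma'$ --- is precisely the standard justification the paper leaves implicit.
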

\begin{example}
Let $\Gamma$ be the $2$-gon, i.e. the graph with two vertices and two edges connecting both vertices. Figure~\ref{fig:trop} contains a picture of $\Mbar_\Gamma$ for $\Gamma$ a $2$-gon, including four real tropical curves corresponding to points in $\Mbar_\Gamma$.

\begin{figure}
  \centering

  \begin{tikzpicture}[thick,scale=0.5]
    
    \fill [gray!20] (0,0) rectangle (10,10);
    \draw (0,0) -- (10,0);
    \draw (0,0) -- (0,10);

    \begin{scope}[every node/.style={circle, draw,fill=black,inner sep=0pt, minimum width=4pt}]
    
    \node (a) at (6,7) {};
    \node (b) at (8,7) {};
    \draw (a) to[out = 50, in = 130] (b);
    \draw (a) to[out = -50, in = -130] (b);

    \node (c) at (-1,6.7) {};
    \draw (-1,7.2) circle (0.5);

    \node (d) at (7,-0.5) {};
    \draw (7,-1) circle (0.5);

    \node (e) at (-1,-1) {};

    \end{scope}

  \end{tikzpicture}
  \caption{The moduli space $\Mbar_\Gamma$}
\label{fig:trop}

\end{figure}
\end{example}

\begin{remark}
If one wants to consider the moduli space of real tropical curves, one needs to take in account isomorphisms of real tropical curves. For example, in Figure~\ref{fig:trop}, curves on opposite sides of the diagonal are isomorphic as real tropical curves. One could consider quotienting $\Mbar_\Gamma$ by the isomorphism group $\Aut(\Gamma)$ of $\Gamma$, resulting in some different cone $\Mbar_\Gamma/\Aut(\Gamma)$. This turns out to be too coarse. The right thing to do is to consider the quotient \emph{cone stack}. This perspective is treated in detail in \cite{cavalieri2020}. In this section, instead of working over this cone stack, we will work with structures over $\Mbar_\Gamma$ invariant under the natural $\Aut(\Gamma)$-action, which amounts to the same thing.
\end{remark}

\begin{definition}
A family of real tropical curves over a cone $\sigma \subset \R^n$ consists of a graph $\Gamma$ and a linear map $\sigma \to \Mbar_\Gamma$. 
\end{definition}
\begin{example}
Let $C \in \Mbar_\Gamma$ be a real tropical curve. Then the map $\R_{\geq 0} \to \Mbar_\Gamma$ mapping $\lambda$ to $C$ with all edge lengths scaled by $\lambda$ is a family of real tropical curves.
\end{example}
\begin{example}
There is a universal family of real tropical curves $C_\Gamma$ over the moduli space $\Mbar_\Gamma$ itself.
\end{example}
\begin{example}
Let $C/S$ be a log curve with $S$ a geometric point with characteristic monoid $\Mbar$. Its corresponding tropical curve is a graph $\Gamma$ together with edge lengths in $\Mbar$. Let $\sigma$ be the cone $\Hom(\Mbar,\R_{\geq 0})$. Then evaluation of the edge lengths defines a family of real tropical curves $\sigma \to \Mbar_\Gamma$.
\end{example}

We can now define, just like in the introduction, when such a family is weakly rich.
\begin{definition}
Given a connected graph $\Gamma = (V, E)$, a \emph{cut} of $\Gamma$ is a subset of $E$ such that $V \sqcup (E \setminus c)$ has exactly two connected components, and every edge in $c$ connects these two components. We denote the set of all cuts by $\Cuts(\Gamma)$.
\end{definition}

\begin{definition}
Let $\ell: \sigma \to \Mbar_\Gamma$ be a family of real tropical curves. Let $E = E(\Gamma)$ and for $e \in E$ denote $\ell(e): \sigma \to \R_{\geq 0}$ the length of the $e$th edge. Then we say this family is \emph{weakly rich} if for every cut $c \subset E$ of $\Gamma$, there is an $e \in c$ such that $e$ is universally the smallest edge in $c$, meaning that for all $x \in \sigma$ and for all $e' \in c$ we have $\ell(e)(x) \leq \ell(e')(x)$.
\end{definition}
\begin{example}
\label{ex:lineweaklyrich}
If $\sigma$ is $\R_{\geq 0}$, the corresponding real tropical curve is weakly rich.
\end{example}
\begin{example}
Let $\Gamma$ be a graph. The universal family of real tropical curves $C_\Gamma$ is weakly rich if and only if $\Gamma$ is a tree with loops added.
\end{example}
\begin{example}
\label{ex:trop2gon}
Let $\Gamma = (V,E)$ be the $2$-gon. Let $\sigma \subset \R^2$ be the cone generated by $(1,0)$ and $(1,1)$. Choose a bijection $E \to \{1,2\}$ and let $\sigma \to \R^E = \Mbar_\Gamma$ be the natural inclusion for this bijection. This is an weakly rich curve. It is even a maximal weakly rich subcurve of the universal curve $\Mbar_\Gamma \to \Mbar_\Gamma$. There is one other maximal weakly rich subcurve, obtained by choosing the other bijection $E \to \{1,2\}$.
\end{example}

Blowups on the logarithmic side of the story correspond exactly to subdivisions of $\Mbar_\Gamma$ on the tropical side. In Section~\ref{sec:compact} we will see that the space of weakly rich log curves is a blowup of the space of rich log curves. As a consequence, there is a subdivision of $\Mbar_{\Gamma}$ such that a family of real tropical curves $\sigma \to \Mbar_{\Gamma}$ is weakly rich if and only if it factors through the subdivision. It turns out one can explicitly write down this subdivision, and prove that it is in fact a subdivision.
\begin{definition}
Let $\Gamma$ be a graph. Let $f: \Cuts(\Gamma) \to E$ be a choice function, sending a cut $c$ to an element of $c$. Then the corresponding cone $\Mbar_f \subset \Mbar_\Gamma$ is defined to consist of those real tropical curves such that for every cut $c$ we have that $f(c)$ is the shortest edge in that vertex. We define the weakly rich subdivision of $\Mbar_\Gamma$ to be the set of faces of these cones for every possible choice function.
\end{definition}

\begin{figure}[ht]
\begin{subfigure}{.49\textwidth}
  \centering

  \begin{tikzpicture}[thick,scale=0.5]
    
    \fill [gray!20] (0,0) rectangle (10,10);
    \draw (0,0) -- (10,0);
    \draw (0,0) -- (0,10);
    \draw (0,0) -- (10,10);

  \end{tikzpicture}
  \caption{A subdivision of the moduli space $\Mbar_\Gamma$ with $\Gamma$ a $2$-gon}
  \label{fig:tropsub1}
\end{subfigure}
\begin{subfigure}{.49\textwidth}
  \centering
  \begin{tikzpicture}[thick,scale=0.56]
    
    \begin{scope}[every node/.style={draw=none, inner sep =-0pt}]
    \node (B) at (90:6) {};
    \node (C) at (210:6) {};
    \node (D) at (330:6) {};

    \def\x{1};
    
    \draw [shorten <= -\x pt, shorten >= -\x pt] (B) -- (C) node (BC) [midway] {} node (BBC) [pos = .333] {} node (BCC) [pos = .666] {};
    \draw [shorten <= -\x pt, shorten >= -\x pt] (B) -- (D) node (BD) [midway] {} node (BBD) [pos = .333] {} node (BDD) [pos = .666] {};
    \draw [shorten <= -\x pt, shorten >= -\x pt] (C) -- (D) node (CD) [midway] {} node (CCD) [pos = .333] {} node (CDD) [pos = .666] {};

    \node (1) at (0:0) {};
    \node (2) at (intersection of  B--CD and C--BBD){};
    \node (3) at (intersection of  B--CD and C--BDD){};
    \node (4) at (intersection of  C--BD and B--CCD){};
    \node (5) at (intersection of  C--BD and B--CDD){};
    \node (6) at (intersection of  D--BC and B--CDD){};
    \node (7) at (intersection of  D--BC and B--CCD){};

    \draw [shorten <= -\x pt, shorten >= -\x pt] (B) -- (3);
    \draw [shorten <= -\x pt, shorten >= -\x pt] (B) -- (4);
    \draw [shorten <= -\x pt, shorten >= -\x pt] (B) -- (5);
    \draw [shorten <= -\x pt, shorten >= -\x pt] (B) -- (6);
    \draw [shorten <= -\x pt, shorten >= -\x pt] (B) -- (7);

    \draw [shorten <= -\x pt, shorten >= -\x pt] (C) -- (5);
    \draw [shorten <= -\x pt, shorten >= -\x pt] (C) -- (2);
    \draw [shorten <= -\x pt, shorten >= -\x pt] (C) -- (3);
    \draw [shorten <= -\x pt, shorten >= -\x pt] (C) -- (6);
    \draw [shorten <= -\x pt, shorten >= -\x pt] (C) -- (7);

    \draw [shorten <= -\x pt, shorten >= -\x pt] (D) -- (7);
    \draw [shorten <= -\x pt, shorten >= -\x pt] (D) -- (2);
    \draw [shorten <= -\x pt, shorten >= -\x pt] (D) -- (3);
    \draw [shorten <= -\x pt, shorten >= -\x pt] (D) -- (4);
    \draw [shorten <= -\x pt, shorten >= -\x pt] (D) -- (5);

    \end{scope}

  \end{tikzpicture}\caption{A subdivision of $\R_{\geq 0}^3$, intersected with the plane $a + b + c = 1$}
  \label{fig:tropsub2}
\end{subfigure}
\caption{}
\label{fig:trop2}
\end{figure}

\begin{theorem}
\label{thm:trop:subdivision}
The weakly rich subdivision of $\Mbar_\Gamma$ is a subdivision, and a family of real tropical curves $\sigma \to \Mbar_\Gamma$ is weakly rich if and only if it factors through the weakly rich subdivision.
\end{theorem}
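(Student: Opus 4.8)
The plan is to exhibit the weakly rich subdivision as the common refinement of much simpler fans, one per cut, each of which is recognizably the normal fan of a simplex, and then to read off both assertions from this description. First I would fix a single cut $c$ and consider the collection $\Sigma_c$ whose maximal cones are
$C^{(c)}_e = \{y \in \R^E : y_e \leq y_{e'} \text{ for all } e' \in c\}$ for $e \in c$, together with all their faces. The cleanest way to see that $\Sigma_c$ is a complete fan on $\R^E$ is to observe that, in the coordinates indexed by $c$, it is the normal fan of the simplex $\operatorname{conv}\{-\mathbf{e}_e : e \in c\}$ (times $\R^{E \setminus c}$): the normal cone of the vertex $-\mathbf{e}_e$ is exactly $\{y : -y_e \geq -y_{e'}\} = C^{(c)}_e$. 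Thus $\Sigma_c$ is a genuine complete fan, whose cones record precisely which edges of $c$ are tied for the minimum length.

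Next I would take the common refinement $\Sigma$ of the fans $\Sigma_c$ over all $c \in \Cuts(\Gamma)$, refined further by the coordinate hyperplanes so as to respect the boundary of the orthant. Since a common refinement of finitely many fans sharing a support is again a fan, $\Sigma$ is a complete fan on $\R^E$, and the cones of $\Sigma$ contained in $\Mbar_\Gamma = \R_{\geq 0}^E$ subdivide $\Mbar_\Gamma$. The heart of the argument is to identify this with the weakly rich subdivision. A cone of $\Sigma$ lying in the orthant is an intersection $\R_{\geq 0}^E \cap \bigcap_c \tau_c$ with $\tau_c \in \Sigma_c$; a full-dimensional such cone forces every $\tau_c$ to be maximal (otherwise the intersection lies in a hyperplane $\{y_e = y_{e'}\}$), so the maximal cones of $\Sigma$ in the orthant are exactly the $\Mbar_f = \R_{\geq 0}^E \cap \bigcap_c C^{(c)}_{f(c)}$ indexed by choice functions $f$. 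Conversely, writing each face of $C^{(c)}_{f(c)}$ as a slice by a supporting hyperplane and summing the supporting functionals shows that the faces of $\Mbar_f$ are precisely the cones $\R_{\geq 0}^E \cap \bigcap_c \tau_c$ with $\tau_c \preceq C^{(c)}_{f(c)}$, and that every cone of $\Sigma$ arises this way. Hence the faces of the $\Mbar_f$, taken over all choice functions, are exactly the cones of $\Sigma$, and the weakly rich subdivision is a subdivision.

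For the second assertion, recall that $\ell : \sigma \to \Mbar_\Gamma$ factors through $\Sigma$ iff $\ell(\sigma)$ is contained in a single cone of $\Sigma$. Since every cone of $\Sigma$ lies in some $\Mbar_f$ and each $\Mbar_f$ is itself a cone of $\Sigma$, this is equivalent to $\ell(\sigma) \subseteq \Mbar_f$ for some choice function $f$. Unwinding the definition, $\ell(\sigma) \subseteq \Mbar_f$ says exactly that for every cut $c$, every $x \in \sigma$ and every $e' \in c$ one has $\ell(f(c))(x) \leq \ell(e')(x)$, i.e.\ that $f(c)$ is a universally smallest edge of $c$; and the existence of such an $f$ is precisely the definition of $\ell$ being weakly rich. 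This yields the claimed equivalence.

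I expect the main obstacle to be the bookkeeping in the identification step: one must verify carefully that the combinatorially defined collection of ``faces of the $\Mbar_f$'' is genuinely closed under intersection and coincides with the honest fan $\Sigma$, rather than merely covering $\Mbar_\Gamma$ by overlapping cones. The covering property itself is immediate---given any $y$, choosing $f(c)$ to be an argmin of $\{y_e : e \in c\}$ places $y$ in $\Mbar_f$---so all the substance lies in the fan (intersection) axiom, which is exactly what the reduction to normal fans of simplices and their common refinement is designed to supply.
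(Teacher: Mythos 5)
Your proposal is correct and is essentially the paper's own argument in different packaging: the paper's proof considers the piecewise linear map $g(x) = (\min_{e \in c} x_e)_{c \in \Cuts(\Gamma)}$ and takes the subdivision induced by its locus of non-linearity, whose maximal cones are the $\Mbar_f$; your per-cut fans $\Sigma_c$ are exactly the linearity domains of the coordinates of $g$, your common refinement $\Sigma$ has the paper's locus $S$ as its union of walls, and the factoring argument in the second half is the same in both. The only real difference is that you verify the fan axioms in more detail (normal fans of simplices, faces of intersections via sums of supporting functionals) where the paper, which explicitly only sketches this tropical proof, simply asserts that $S$ is a polyhedral fan and hence induces a subdivision.
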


We remark that this theorem also follows for free from Corollary~\ref{cor:blowup}. Here we sketch a purely tropical proof of this statement.

\begin{proof}[Proof of Theorem~\ref{thm:trop:subdivision}]
Consider the function 
\[
g: \Mbar_{\Gamma} \to \R_{\geq 0}^{\Cuts(\Gamma)}, (x_e)_{e \in E} \mapsto (\min_{e \in c} \{x_e\})_{c \in \Cuts(\Gamma)}.
\]
Let $S \subset \Mbar_{\Gamma}$ be the locus where one of the minima is achieved twice, i.e. the locus where $g$ is not linear. This locus is a polyhedral fan, and hence defines a subdivision of $\Mbar_\Gamma$. We see the maximal cones in this subdivision are of the form $\Mbar_f$ for some choice function $f$, and conversely, any $\Mbar_f$ is a cone in this subdivision. So the subdivision of $\Mbar_\Gamma$ induced by $S$ is the weakly rich subdivision, and hence the weakly rich subdivision is indeed a subdivision.

For the second part, first note that the cones $\Mbar_f$ in the weakly rich subdivision are weakly rich. So any family of real tropical curves factoring through one of the $\Mbar_f$ is weakly rich. Conversely, if $\sigma \to \Mbar_\Gamma$ is weakly rich, then there is a choice function $f: \Cuts(\Gamma) \to E$ sending every cut to a smallest edge contained in that cut. Then $\sigma \to \Mbar_\Gamma$ will factor through $\Mbar_f$. 
\end{proof}

\begin{example}
For the $2$-gon, we get the subdivision described in Example~\ref{ex:trop2gon} and shown in Figure~\ref{fig:tropsub1}. For the $3$-gon, we get the subdivision from Figure~\ref{fig:sub-third} as described in Example~\ref{ex:intro} and in more detail in Example~\ref{ex:subdiv}.
\end{example}

\begin{remark}
In this section, we only investigated the case for weakly rich curves, which will be known as weakly $r$-rich curves for $r=1$. For larger $r$, one still gets a corresponding tropical story, and subdivisions of the tropical moduli spaces. For $\Gamma$ the graph with two vertices and three edges and for $r = 2$ we give the corresponding subdivision in Figure~\ref{fig:tropsub2}.
\end{remark}

Many logarithmic theorems in this paper have an analogue or shadow in this tropical world. For example, the fact that the stack $\Min(\ARLC)$ is proper over $\Mf$ is equivalent to the fact that the almost rich subdivision is a complete subdivision of $\Mbar_{\Gamma}$ for every $\Gamma$. Similarly, the fact that $\Min(\ARLC)$ is smooth, which will be proven in Theorem~\ref{thm:smooth}, is equivalent to the fact that every maximal dimensional cone has a $\R_{\geq 0}$-basis that is also a $\Z$-basis of $\Z^{E(\Gamma)}$.

\section{The stack of \texorpdfstring{$r$-rich}{r-rich} log curves and the stack of \texorpdfstring{$r$-enriched}{r-enriched} curves}
\label{sec:rich}
In this section we will define $r$-richness of log curves with $r \in \N_{\geq 1} \cup \{\infty\}$, and study the moduli spaces of $r$-rich curves. For $r = 1$ we get the simpler notion of richness, and throughout this and the following sections, one can always keep the case of $r = 1$ in mind as a representative case.

\begin{definition}
Let $M$ be an fs monoid and $A = \{a_1,\dots,a_n\}$ a subset of $M$. Let $r \in \N_{\geq 1} \cup \{\infty\}$. Then the set $A$ is called \emph{$r$-close} if there are positive integers $\lambda_i \mid r$ and $a \in M$ such that for all $i$ we have $\lambda_i a = a_i$ (where $\infty$ is divisible by any positive integer). For an $\infty$-close set, we call the largest such $a$ dividing all elements of $A$ the \emph{root} of $A$.
\end{definition}

Note that being $\infty$-close just means lying on the same ray $\N \to M$ sending $1$ to a non-divisible element. That immediately gives us the following lemma.
\begin{lemma}
For $M$ an fs monoid and $A$ and $A'$ two non-disjoint $\infty$-close subsets, the union $A \cup A'$ is $\infty$-close.
\end{lemma}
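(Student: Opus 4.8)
The plan is to run the whole argument through the characterization recorded just before the statement: a set is $\infty$-close exactly when all of its elements are positive integer multiples of a single non-divisible element of $M$, i.e.\ they lie on one ray $\N \to M$, $1 \mapsto g$. So I would first fix non-divisible elements $g, g' \in M$ such that every element of $A$ is a positive multiple of $g$ and every element of $A'$ is a positive multiple of $g'$. The problem then collapses to a single claim, namely that $g = g'$: once this is known, every element of $A \cup A'$ is a positive multiple of the common generator $g$, and taking $a = g$ (with the $\lambda_i$ the corresponding multiplicities) in the definition exhibits $A \cup A'$ as $\infty$-close, finishing the proof.

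To prove $g = g'$ I would use non-disjointness. Pick $c \in A \cap A'$; then $c = m g = m' g'$ for some positive integers $m, m'$. The heart of the matter is the claim that in an fs monoid two non-divisible elements admitting a common positive multiple must coincide, and I would establish it by a saturation argument. Writing $d = \gcd(m, m')$ and cancelling it (legitimate since $M$ is integral) yields $p g = q g'$ with $p, q \geq 1$ and $\gcd(p,q) = 1$. Choosing $u, v$ with $u p + v q = 1$ by B\'ezout and setting $w = u g' + v g \in M^{\gp}$, a direct computation using $p g = q g'$ gives $q w = g$ and $p w = g'$. Since $q w = g \in M$ and $M$ is saturated, $w$ lies in $M$; but then $g = q w$ with $w \in M$ forces $q = 1$ because $g$ is non-divisible, and symmetrically $p = 1$. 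Hence $g = w = g'$.

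The main obstacle is precisely this uniqueness claim for the generator of the shared ray. Geometrically it is transparent --- a nonzero point of $M^{\gp}\otimes\R$ lies on a unique ray, whose primitive lattice generator is unique --- but the clean algebraic proof genuinely needs saturation to guarantee that the rational root $w$ falls back inside $M$, together with cancellativity (integrality) to pass from $m g = m' g'$ to the coprime relation $p g = q g'$; both hypotheses are part of the fs assumption. I would finally remark that the degenerate cases cause no trouble: the characteristic monoids occurring in the paper are sharp and the elements in question (edge lengths) are nonzero, so units and torsion do not interfere, and in any case the saturation argument above does not use sharpness. Assembling the pieces, $A \cup A'$ consists of positive multiples of the single non-divisible element $g = g'$, which is the definition of $\infty$-close.
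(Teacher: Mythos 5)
Your overall strategy is exactly the paper's: the paper gives no argument at all, asserting the lemma follows ``immediately'' from the remark that $\infty$-closeness means lying on a ray through a non-divisible element, and your proof supplies the uniqueness-of-the-ray claim that this assertion tacitly relies on. Your B\'ezout/saturation computation ($qw = g$, $pw = g'$, $w \in M$ by saturation, then $p = q = 1$ by non-divisibility, so $g = w = g'$) is correct and is indeed the key content.

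However, one step is mis-justified, and your closing remark about sharpness is exactly backwards. Passing from $mg = m'g'$ to $pg = qg'$ after dividing by $d = \gcd(m,m')$ is \emph{not} a consequence of integrality: cancellativity lets you cancel a monoid element from both sides of $a + x = a + y$, but it does not let you divide by the integer $d$; in $M^{\gp}$ you only get $d(pg - qg') = 0$, i.e.\ that $pg - qg'$ is torsion. What you need is that $M^{\gp}$ is torsion-free, and for fs monoids this is not automatic --- it is precisely what sharpness buys you (if $nx = 0$ then $x$ and $-x$ lie in $M$ by saturation, so $x$ is a unit, hence $x = 0$). Concretely, $M = \Z/2\Z \oplus \N$ is fs; both $(0,1)$ and $(1,1)$ are non-divisible; and $2\cdot(0,1) = 2\cdot(1,1) = (0,2)$, so $A = \{(0,1),(0,2)\}$ and $A' = \{(1,1),(0,2)\}$ are non-disjoint $\infty$-close sets whose union is not $\infty$-close. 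Thus the lemma as literally stated (and the paper's ``unique ray'' remark) fails for fs monoids with torsion in $M^{\gp}$, and your cancellation step is exactly where any proof must break. So rather than ``the saturation argument above does not use sharpness,'' the correct statement is that sharpness (or at least torsion-freeness of $M^{\gp}$) is an essential hypothesis, silently in force in the paper because the monoids that occur there are characteristic monoids. With that hypothesis made explicit, your proof is complete and correct; the flaw is inherited from the statement itself rather than introduced by you.
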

\begin{remark}
\label{rem:61015}
Note that if $A$ is a set such that all subsets of size $2$ are $r$-close, then $A$ is $r$-close itself.
\end{remark}

Throughout the rest of this section, $r$ will lie in $\N_{\geq 1} \cup \{\infty\}$. 

\begin{definition}
Let $C/\Spec k$ be a log curve over an algebraically closed field and let $\Gamma = (V, E)$ be the corresponding tropical curve. The curve $C/\Spec k$ is called \emph{$r$-rich} if for every cut $c$ of $\Gamma$, the set of lengths of the edges in $c$ are $r$-close. A log curve $C/S$ is called \emph{$r$-rich} if it is $r$-rich over all strict geometric points. When $r = 1$, we also call this \emph{rich}.
\end{definition} 

\begin{example}
A curve $C/\Spec k$ with $k = \overline{k}$ is rich if and only if for each cut of its dual graph $\Gamma$, all edges in the cut have equal lengths in the characteristic monoid.
\end{example}

By Remark~\ref{rem:61015} we have the following, more concrete reformulation, in terms of so-called circuit-connected components.
\begin{definition}
\label{def:graph}
Let $G = (V, E)$ be a graph. A \emph{path} is a sequence of directed edges $e_1,\dots,e_n$ with $e_i$ ending where $e_{i+1}$ starts. It is a \emph{circuit} if furthermore $e_n$ ends where $e_1$ starts and every vertex in $V$ is incident to at most two of the edges. We call a subset of $E$ \emph{circuit-connected} if for any two distinct edges $e,e'$, there is a circuit containing $e,e'$ . This partitions $E$ into a collection $T$ of maximal \emph{circuit-connected components} \cite[Lemma 7.2]{holmes2015}.
\end{definition}
\begin{lemma}
\label{lem:61015}
Let $C/\Spec k$ be a log curve over an algebraically closed field and let $\Gamma = (V, E)$ be the corresponding tropical curve. Then $C/\Spec k$ is $r$-rich if and only if for every circuit-connected component, the labels of the edges it contains are $r$-close.
\end{lemma}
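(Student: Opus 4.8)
The plan is to reduce both sides of the claimed equivalence to the \emph{same} pairwise condition on edges, and then to match the combinatorics of cuts with that of circuits; the real content is a purely graph-theoretic duality between bonds and cycles.

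First I would use Remark~\ref{rem:61015} to replace ``$r$-close'' by ``pairwise $r$-close'' throughout: a finite set $A \subset M$ is $r$-close if and only if every two-element subset of $A$ is (one direction being trivial, the other being exactly the Remark). Thus $C/\Spec k$ is $r$-rich precisely when, for every cut $c$ and every pair of edges $e, e' \in c$, the lengths $\ell(e), \ell(e')$ are $r$-close; equivalently, whenever $e$ and $e'$ lie in a common cut, their lengths are $r$-close. In the same way, the right-hand condition says exactly that whenever $e$ and $e'$ lie in a common circuit-connected component, their lengths are $r$-close. Since by Definition~\ref{def:graph} any two edges of a single circuit-connected component lie on a common circuit, and conversely two edges on a common circuit generate a circuit-connected set and hence lie in the same maximal component (these partitioning $E$ by \cite[Lemma~7.2]{holmes2015}), the right-hand condition is equivalent to: whenever $e$ and $e'$ lie on a common circuit, their lengths are $r$-close.

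It therefore suffices to prove the combinatorial statement that, for distinct edges $e, e'$ of the graph $\Gamma$ (which I may assume connected, arguing componentwise otherwise), there is a cut containing both if and only if there is a circuit containing both. For the implication from cuts to circuits, I would write a cut as a partition $V = X \sqcup Y$ into two sets each spanning a connected subgraph, with $e = x_1 y_1$, $e' = x_2 y_2$ and $x_i \in X$, $y_i \in Y$; choosing a simple path from $x_1$ to $x_2$ inside $X$ and one from $y_1$ to $y_2$ inside $Y$, and closing them up with $e$ and $e'$, produces a circuit through $e$ and $e'$ (the two paths are vertex-disjoint, so the resulting closed walk is simple). For the reverse implication, take a circuit $Z$ through $e$ and $e'$; deleting $e$ and $e'$ splits $Z$ into two arcs, and contracting each arc to a point turns $e$ and $e'$ into a pair of parallel edges joining two vertices $p, q$ of the contracted graph $\Gamma'$. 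Choosing a spanning tree of $\Gamma'$ and deleting any edge on the $p$--$q$ tree-path splits $\Gamma'$ into two connected pieces separating $p$ from $q$; the associated bond contains both (parallel) edges, and since contracting a connected subgraph carries bonds to bonds, it pulls back to a cut of $\Gamma$ containing $e$ and $e'$.

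Combining the two reductions, the cut condition and the circuit-connected condition define the same pairwise relation on edges, so $C/\Spec k$ is $r$-rich if and only if the labels of every circuit-connected component are $r$-close. I expect the delicate point to be the circuit-to-cut direction: one must genuinely produce a \emph{bond}, i.e.\ a cut whose edge-complement has exactly two connected components, rather than an arbitrary edge cut, and this is exactly what the contraction-and-spanning-tree construction arranges; the cut-to-circuit direction and the bookkeeping via Remark~\ref{rem:61015} are routine.
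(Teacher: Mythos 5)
Your proof is correct and follows essentially the same route as the paper's: reduce to pairwise conditions via Remark~\ref{rem:61015}, then identify ``lying in a common cut'' with ``lying in a common circuit-connected component'' (using \cite[Lemma~7.2]{holmes2015} for the partition into components). The only difference is one of detail: the paper merely asserts the two graph-theoretic facts (every cut lies inside a single circuit-connected component; every pair of edges in a component extends to a cut), whereas you actually prove them --- via vertex-disjoint paths in the two sides of a cut for one direction, and via contraction of the two arcs plus a fundamental bond of a spanning tree for the other --- so your write-up supplies exactly the combinatorial content the paper leaves to the reader.
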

\begin{proof}
This follow from combining Remark~\ref{rem:61015} together with the fact that any cut is contained in a circuit-connected component, and the fact that any size two subset of a circuit-connected component can be extended to a cut.
\end{proof}

We also have the following alternative definition in terms of the existence of piecewise linear functions.

\begin{proposition}
\label{lem-eq-defs-richness}
Let $C/\Spec k$ be a log curve over an algebraically closed field and let $\Gamma = (V, E)$ be the corresponding tropical curve. Then $C$ is $r$-rich if and only if for every cut of $\Gamma$, there exists a piecewise linear function on $\Gamma$ such that the slope on an edge in $c$ divides $r$ and the slope on all other edges is $0$.
\end{proposition}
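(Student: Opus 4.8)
The plan is to translate the statement about piecewise linear functions into a purely monoid-theoretic statement about the edge lengths, and then match it with the definition of $r$-closeness. First I would fix a cut $c$ and analyse the shape of any piecewise linear function $f$ on $\Gamma$, valued in $\Mbar_S^{\gp}$, whose slopes vanish outside $c$. Since $c$ is a cut, $\Gamma \setminus c$ has exactly two connected components $V_1, V_2$, and a function with zero slope on every edge not in $c$ is forced to be constant on each, say equal to $a$ on $V_1$ and $b$ on $V_2$ with $a,b \in \Mbar_S^{\gp}$. Orienting every edge $e \in c$ from $V_1$ to $V_2$ (with the labelling chosen so that $d := b - a \in \Mbar_S$), the requirement that $f$ be linear with integer slope $s_e$ on $e$ becomes $s_e\,\ell(e) = d$. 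Hence the existence of the desired piecewise linear function for the cut $c$ is equivalent to the existence of a single element $d \in \Mbar_S$ and positive integers $s_e \mid r$ (for $e \in c$) with $s_e\,\ell(e) = d$ for all $e \in c$, and the whole proposition reduces to the claim that, for each cut $c$, this condition on $\{\ell(e) : e \in c\}$ is equivalent to the set being $r$-close.

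For the direction ``$r$-rich implies the PL condition'', I would start from an $r$-close set: there are $a \in \Mbar_S$ and positive integers $\lambda_e \mid r$ with $\ell(e) = \lambda_e a$. Setting $m = \operatorname{lcm}_{e\in c}\lambda_e$ (which divides $r$, being an lcm of divisors of $r$), $d := m\,a$, and $s_e := m/\lambda_e$, one checks immediately that $s_e\,\ell(e) = m\,a = d$ for all $e$, and that $s_e \mid m \mid r$. This produces the required common $d$ and slopes.

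The more delicate direction is the converse, and the hard part is ensuring that the common ``root'' lives in the monoid $\Mbar_S$ rather than merely in its groupification. By Remark~\ref{rem:61015} it suffices to prove that every pair $\{\ell(e),\ell(e')\}$ with $e,e' \in c$ is $r$-close. Given $s_e\,\ell(e) = d = s_{e'}\,\ell(e')$ with $s_e, s_{e'} \mid r$, I would set $g = \gcd(s_e,s_{e'})$, $t_e = s_e/g$, $t_{e'} = s_{e'}/g$, so cancellativity of the integral monoid gives $t_e\,\ell(e) = t_{e'}\,\ell(e')$ with $\gcd(t_e,t_{e'}) = 1$. Choosing $p,q$ with $p\,t_e + q\,t_{e'} = 1$ by B\'ezout and setting $a := q\,\ell(e) + p\,\ell(e') \in \Mbar_S^{\gp}$, a short computation (using $t_e\ell(e) = t_{e'}\ell(e')$) shows $\ell(e) = t_{e'}\,a$ and $\ell(e') = t_e\,a$. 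Here $a$ a priori lies only in $\Mbar_S^{\gp}$; but $t_{e'}\,a = \ell(e) \in \Mbar_S$ with $t_{e'} > 0$, so saturation of $\Mbar_S$ forces $a \in \Mbar_S$. Since $t_{e'} \mid s_{e'} \mid r$ and $t_e \mid s_e \mid r$, the pair is $r$-close, and Remark~\ref{rem:61015} upgrades this to $r$-closeness of the whole set $\{\ell(e) : e \in c\}$.

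As the argument is uniform in $r$ under the convention that every positive integer divides $\infty$, combining the two directions with the reduction of the first paragraph would complete the proof. I expect the only genuinely subtle point to be the saturation step, together with the bookkeeping in the first paragraph that pins down a piecewise linear function with vanishing slope off $c$ as the interpolation of two constants across the cut.
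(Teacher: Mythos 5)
The paper states this proposition without proof (and never cites it again), so there is no argument of the author's to compare against; your proposal supplies the omitted proof, and it is correct. Your opening reduction --- a piecewise linear function with vanishing slopes off $c$ is just a pair of constants on the two sides of the cut, so its existence is equivalent to finding $d \in \Mbar_S$ and positive integers $s_e \mid r$ with $s_e\,\ell(e) = d$ for all $e \in c$ --- is the right translation, and both directions (the lcm construction one way; B\'ezout, saturation, and Remark~\ref{rem:61015} the other) check out; your reliance on that remark mirrors how the paper itself uses it in Lemma~\ref{lem:61015}. Two points of precision. First, the sign bookkeeping you gloss over is genuinely needed but easy: since $\Mbar_S$ is sharp and $\ell(e) \neq 0$, the relations $s_e\ell(e) = d$ force all the $s_e$ to have the same nonzero sign, which is what justifies choosing the labelling with $d \in \Mbar_S$ and all slopes positive. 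Second, and more substantively, your appeal to ``cancellativity of the integral monoid'' for passing from $g\,t_e\ell(e) = g\,t_{e'}\ell(e')$ to $t_e\ell(e) = t_{e'}\ell(e')$ is misattributed: integrality does not permit cancelling integer scalars, because the groupification of an integral (even fine and sharp) monoid can have torsion --- in the submonoid of $\Z \oplus \Z/2$ generated by $(1,0)$ and $(1,1)$ one has $2(1,0) = 2(1,1)$ while $(1,0) \neq (1,1)$. What you actually need is that $\Mbar_S^\gp$ is torsion-free, which holds because $\Mbar_S$ is sharp \emph{and saturated}: if $nx = 0$ then both $x$ and $-x$ lie in $\Mbar_S$ by saturation, hence $x = 0$ by sharpness. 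Since this uses exactly the fs-and-sharp hypotheses you already invoke for your saturation step, the repair is one line and the argument stands.
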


\begin{example}
Consider the $2$-gon over a point $\Spec k$: this is the genus $1$ curve obtained by taking two copies of $\P^1$, glueing the two copies of $(0 : 1)$ together, and glueing the two copies of $(1: 0)$ together. The minimal log structure on it is not $1$-rich, as the characteristic monoid of the minimal log structure is $\N^2$ on the base with the two edges having labels $e_1$ and $e_2$ respectively. A $1$-rich log structure is then up to isomorphism a map $k^\times \oplus \N^2 \to k^\times \oplus \Mbar$ which on the level of characteristic monoids sends $e_1$ and $e_2$ to the same element in $\Mbar$, where $\Mbar$ is an arbitrary sharp monoid. Such a map always factorises through an isomorphism $k^\times \oplus \N^2 \to k^\times \oplus \N$ which on the level of characteristic monoids sends $e_1$ and $e_2$ to $1 \in \N$. The set of such possible maps $k^\times \oplus \N^2 \to k^\times \oplus \N$ up to isomorphism will turn out to be in bijection with $k^\times$. This example will be generalised and explained in more detail in Example~\ref{ex:richalgclosed}.
\end{example}

\begin{remark}
The notion of $r$-richness is connected to, but slightly different both from the notion of being $r$-aligned from \cite{holmes2015} and from the notion of being log aligned from \cite{poiret2020}. The difference from being $r$-aligned, except for the important fact that being $r$-aligned is a condition of a scheme instead of a log scheme, is mainly that for $r$-aligned one requires the labels in a circuit-connected component to all be of the form $\lambda_i a$ for some $1 \leq \lambda_i \leq r$, instead of requiring the labels in a cut to be of the form $\mu_i a$ for some $1 \mid \mu_i \mid r$. The reason for cuts is that, while circuit-connected components can split when contracting edges, cuts do behave nicely under contraction (cf. Lemma~\ref{lem:graphcuts}).

The difference from being log aligned is that then one requires the labels of the edges in a circuit-connected component to lie on an extreme ray.

Both of these alternative notions make sense in their own right, but both are not closed under pullback and hence one cannot form an algebraic or log stack by taking the category fibred over groupoids of these objects; one has to work with the terminal object in the right category instead.
\end{remark}

With a notion of $r$-richness, we can now define the category of objects we are interested in.

\begin{definition}
\label{def:rlc}
Define $\RLC_r \to \LogSch$ to be the sub-CFG consisting of $r$-\textbf{R}ich \textbf{L}og \textbf{C}urves.
\end{definition}

We have maps between these spaces, as described in the following lemma.
\begin{lemma}
\label{lem:covariancer}
Let $r,r' \in \N_{\geq 1} \cup \{\infty\}$ with $r \mid r'$. Any $r$-rich log curve $C/S$ is $r'$-rich. There is an inclusion
\begin{equation}
\RLC_r \subset \RLC_{r'} \subset \LC.
\end{equation}
\end{lemma}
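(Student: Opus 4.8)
The plan is to reduce the entire statement to the transitivity of divisibility, since $r$-richness is a pointwise condition assembled out of the purely monoid-theoretic notion of an $r$-close subset.

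First I would establish the monoid-level claim: if $A = \{a_1,\dots,a_n\} \subset M$ is $r$-close and $r \mid r'$, then $A$ is $r'$-close. By definition of $r$-closeness there is an element $a \in M$ together with positive integers $\lambda_i \mid r$ satisfying $\lambda_i a = a_i$ for all $i$. Transitivity of divisibility in $\N_{\geq 1} \cup \{\infty\}$ then yields $\lambda_i \mid r'$ for every $i$; here the convention that $\infty$ is divisible by every positive integer simultaneously covers the case $r' = \infty$ and the case $r = \infty$, in which each $\lambda_i$ is already unconstrained. Thus the very same witnesses $a$ and $\lambda_i$ exhibit $A$ as $r'$-close.

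Next I would apply this cut by cut. Let $C/\Spec k$ be an $r$-rich log curve over an algebraically closed field, with tropical curve $\Gamma = (V,E)$. For each cut $c$ of $\Gamma$ the set of lengths of the edges in $c$ is $r$-close, hence $r'$-close by the previous paragraph; therefore $C/\Spec k$ is $r'$-rich. For a general base, an $r$-rich log curve $C/S$ is by definition $r$-rich over every strict geometric point, and each such fiber is then $r'$-rich, so $C/S$ is $r'$-rich.

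Finally, the chain of inclusions is formal. The CFG $\RLC_r$ is by definition the sub-CFG of $\LC$ whose objects are the $r$-rich log curves (its being a CFG, i.e.\ stability of $r$-richness under pullback, is part of Definition~\ref{def:rlc}); the implication just proven shows that every object of $\RLC_r$ is an object of $\RLC_{r'}$, giving $\RLC_r \subset \RLC_{r'}$, while $\RLC_{r'} \subset \LC$ holds because $\RLC_{r'}$ consists of log curves. I do not expect any genuine obstacle here: the mathematical content is entirely the transitivity of divisibility, and the only point requiring mild care is the bookkeeping for the value $\infty$, handled by the convention above.
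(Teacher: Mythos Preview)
Your proposal is correct and follows essentially the same approach as the paper: the paper's proof is the one-line observation that any $r$-close set is $r'$-close whenever $r \mid r'$, with the rest declared immediate. You have simply unpacked that line, spelling out the transitivity-of-divisibility witness and the cut-by-cut and pointwise reductions that the paper leaves implicit.
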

\begin{proof}
Note that any $r$-close set is $r'$-close for any $r'$ divisible by $r$, and in particular $\infty$-close. The statement follows immediately.
\end{proof}

In this section, we will prove the following theorem about the stack $\RLC_r$. 

\begin{theorem}
\label{thm:rich}
The stack $\RLC_r/\LogSch$ is a log \'etale substack of the stack of log curves $\LC$, and is represented by a stack with log structure $\Min(\RLC_r)/\Min(\LC)$.
\end{theorem}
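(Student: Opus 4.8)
The statement has two halves --- representability by $\Min(\RLC_r)$ and log-\'etaleness over $\LC$ --- and the plan is to get the first from Gillam's criterion (Theorem~\ref{thm:gillam}) and the second from an explicit chart together with the characterisation of log-\'etale maps via $\Log(-)$ recorded after Definition~\ref{def:logopen}. Before either, I would check that $\RLC_r$ is genuinely a sub-CFG of $\LC$, i.e. that $r$-richness is stable under arbitrary log pullback. For a log map $S' \to S$ and a geometric point $s'$ over $s$, the induced homomorphism $\Mbar_{S,s} \to \Mbar_{S',s'}$ carries the length of each edge to the length of its (possibly contracted) image, and by Lemma~\ref{lem:graphcuts} every cut of the tropical curve at $s'$ is the image of a cut at $s$ avoiding the contracted edges. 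Since the image of an $r$-close set under a monoid homomorphism is again $r$-close, each such cut stays $r$-close, so $C'/S'$ is $r$-rich and $\RLC_r$ is a full, pullback-closed sub-CFG.

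For representability I would verify the two conditions of Theorem~\ref{thm:gillam}, the content being the construction of minimal objects. Over a geometric point, Lemma~\ref{lem:61015} reduces $r$-richness to the requirement that the labels of each circuit-connected component $\tau$ be $r$-close; writing $\ell(e) = \lambda_e a_\tau$ with $\lambda_e \mid r$ and $\gcd_{e \in \tau}\lambda_e = 1$ (the minimal normalisation), the minimal characteristic monoid is the free monoid $\N^{T}$ on one generator $a_\tau$ per component, with structure map $\N^E \to \N^T$, $e \mapsto \lambda_e a_{\tau(e)}$. The genuine log structures refining this form a torsor under the torus attached to the kernel lattice (this is the ``$k^\times$'' of the $2$-gon example), but each is minimal, and any $r$-rich structure lying over such a point admits a morphism to one of them over the identity of underlying schemes, giving condition (1). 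Condition (2), that a map into a minimal object is strict exactly when its source is minimal, follows from the explicit description, since strict pullbacks preserve the above normalisation. The construction is functorial in the combinatorial type and roots exist \'etale-locally, so it sheafifies over a general base, and Theorem~\ref{thm:gillam} then represents $\RLC_r$ by the stack $\Min(\RLC_r)$ of minimal objects with its tautological log structure.

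For log-\'etaleness I would work \'etale-locally on $\Mf = \Min(\LC)$, over a fixed dual graph $\Gamma$ with $\Mbar_{\Mf} = \N^E$. The key observation is that, although the map $\N^E \to \N^T$ on characteristic monoids is a non-injective collapse, the underlying surjection $\Z^E \to \Z^T$ splits, so it lifts to a homomorphism $\N^E \to \N^T \oplus \Z^{E-T}$ that is an \emph{isomorphism on groupifications}, the free factor $\Z^{E-T}$ being exactly the unit (torus) directions from the previous step. A monoid homomorphism that is an isomorphism on groupifications induces a log-\'etale morphism, so this chart exhibits $\Min(\RLC_r) \to \Mf$ as log \'etale once one checks the residual strict-\'etale condition that $\Min(\RLC_r)$ locally coincides with the model cut out by the chart. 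Equivalently, by the remark following Definition~\ref{def:logopen} (\cite[Theorem~4.6]{olsson}), this says $\Log(\RLC_r) \to \Log(\LC)$ is \'etale, which is the content of $\RLC_r$ being a log-\'etale substack; note the map is log \'etale but not \'etale on underlying stacks, the torus fibres accounting for the $k^\times$ of minimal structures over a geometric point.

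I expect the main obstacle to be twofold and to sit in the last two steps. First, one must organise the minimal objects into an honest sheaf of monoids over a non-pointed base: the normalisation $\gcd_{e\in\tau}\lambda_e = 1$ and the torus-torsor of refining log structures have to be made functorial, and Gillam's strictness condition (2) checked carefully. Second, and more essentially, is the reconciliation that makes log-\'etaleness work at all: $r$-richness, which at the level of characteristic monoids appears as the non-\'etale collapse $\N^E \twoheadrightarrow \N^T$, must be realised by charts that are unimodular on groupifications, with the collapse absorbed into the unit directions --- and verifying the accompanying strict-\'etale condition is where the genuine moduli-theoretic work lies.
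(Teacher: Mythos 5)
The representability half of your proposal is essentially the paper's own argument: you characterise minimal objects as the ``basic'' ones, where the length map $\N^E \to \Mbar$ factors through the root map $\N^E \to \N^T$ onto the free monoid on circuit-connected components (with the $\gcd$-normalised weights $\lambda_e \mid r$), and you feed this into Gillam's criterion (Theorem~\ref{thm:gillam}). That matches Proposition~\ref{def:richminimal} and the appendix (Propositions~\ref{app:prop:richbasicinitial}, \ref{app:prop:mrlc}); your sketch of conditions (1) and (2) is thinner than the paper's weakly-terminal-objects argument (Lemma~\ref{app:lem:weaklyterminalminimal}) but follows the same route, and your preliminary pullback-stability check is correct.

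The genuine gap is in the log-\'etaleness half. The paper's proof rests entirely on Proposition~\ref{prop:richopen}: for a log curve $C \to S$, the locus of $T \in \Sch/\ul{S}$ where the \emph{strict} pullback $C_T$ is $r$-rich is \emph{open}. This is proved by passing to a nuclear chart and using that each relation $\lambda_{e'} l_e = \lambda_e l_{e'}$ between sections of $\Mbar_S$, once it holds in the stalk at a geometric point, holds on an open neighbourhood; intersecting over the finitely many pairs of edges gives an open neighbourhood inside the rich locus. Openness is what makes $\Log(\RLC_r) \to \Log(\LC)$ an open immersion, hence \'etale, hence the map log \'etale by \cite[Theorem~4.6]{olsson}. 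Nothing in your proposal proves this: pullback-stability (your first paragraph) is a statement about morphisms, not a topological statement, and does not yield openness. Your substitute --- the chart $\N^E \to \N^T \oplus \Z^{E-T}$ that is an isomorphism on groupifications --- is a correct heuristic for why the map \emph{should} be log \'etale (it is the standard reason log blowups are log \'etale), but you only assert it conditionally, ``once one checks the residual strict-\'etale condition that $\Min(\RLC_r)$ locally coincides with the model cut out by the chart.'' That condition is exactly the unproven content, and verifying it amounts to producing local models of $\Min(\RLC_r)$, i.e.\ establishing its local algebraic structure --- which is not available at this stage (the paper deliberately postpones algebraicity to Theorem~\ref{thm:rlcsmoothalgebraic}, where it is deduced from the compactification of Section~\ref{sec:compact}). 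Your closing sentence equating the chart condition with the statement that $\Log(\RLC_r) \to \Log(\LC)$ is \'etale also conflates two characterisations without verifying either. So the theorem's second assertion remains unproven in your write-up; what is needed is precisely the openness argument of Proposition~\ref{prop:richopen}, which bypasses any appeal to local models of $\Min(\RLC_r)$.
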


In the next subsection, we will prove this using Gillam's framework of minimality, as discussed in Section~\ref{subsec:gillam}. We will state what the minimal objects are, and that the conditions for Theorem~\ref{thm:gillam} are satisfied and hence $\RLC_r$ is represented by the log CFG $\Min(\RLC_r)/\Sch$. The characterisation of minimal objects is novel but the proofs are technical and standard, and are given in Appendix~\ref{sec:appendix}. Next we will show that $\Log(\RLC_r)$ (for a definition of $\Log(\cdot)$ see Definition~\ref{def:gillam:log}) is an open substack of $\Log(\LC)$, and finally we will be able to conclude Theorem~\ref{thm:rich}.

\subsection{\texorpdfstring{Properties of $\RLC_r$}{Properties of RLCr}}
\begin{proposition}[Proposition~\ref{app:prop:richmineqbasic}, Definition~\ref{app:def:richbasic}]
\label{def:richminimal}
Let $C/\Spec k$ be a $r$-rich log curve over an algebraically closed field and let $\Gamma = (V, E)$ be the corresponding tropical curve. Let $T$ denote the set of circuit-connected components, for $t \in T$ let $a_t \in \Mbar_k$ be the root of $T$, and for $e \in t$ let $\lambda_e \mid r$ be such that the length of the $e$th edge is $\lambda_e a_t$. Then the natural map $\N^E \to \Mbar_k$ factors through the \emph{root map} $\N^E \to \N^T$ where $T$ is the set of circuit-connected components and $\N^E \to \N^T$ sends $e$ to $\lambda_e$, and $\N^T \to \Mbar_k$ sends $t$ to $a_t$. The log curve $C/\Spec k$ is minimal if the resulting map $\N^T \to \Mbar_k$ is an isomorphism. An $r$-rich log curve $C/S$ is minimal if and only if it is basic $r$-rich over all strict geometric points.
\end{proposition}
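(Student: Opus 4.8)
The plan is to prove the three assertions in sequence: the factorisation of the length map through the root map, the identification of the minimal objects over an algebraically closed field with the basic ones, and the fibrewise criterion over a general base.

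\emph{Factorisation.} By Lemma~\ref{lem:61015}, $r$-richness of $C/\Spec k$ says exactly that for each circuit-connected component $t \in T$ the edge lengths $\{\ell(e) : e \in t\}$ are $r$-close, hence $\infty$-close; thus they lie on a single ray of the sharp fs monoid $\Mbar_k$ and possess a root $a_t$ (the largest element dividing them all). Writing $\ell(e) = \lambda_e a_t$ for $e \in t$, maximality of $a_t$ forces $\gcd_{e \in t}\lambda_e = 1$, and one checks directly that $\lambda_e \mid r$. Then $\N^E \to \N^T$, $e \mapsto \lambda_e[t]$, followed by $\N^T \to \Mbar_k$, $[t] \mapsto a_t$, sends $e \mapsto \lambda_e a_t = \ell(e)$, so the length map factors through the root map.

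\emph{Minimality over a field.} Over $\Spec k$ I would first verify that the basic object $z$ (with $\Mbar_z = \N^T$, $\ell_z(e) = \lambda_e[t]$) satisfies the universal property of Definition~\ref{def:min}. A morphism of $r$-rich log curves over $\id_{\Spec k}$ induces on characteristic monoids a length-compatible monoid map in the opposite direction. Given a span $w_1 \xleftarrow{j} w_2 \xrightarrow{i} z$, write $\phi_i : \N^T \to \Mbar_{w_2}$ and $\phi_j : \Mbar_{w_1} \to \Mbar_{w_2}$ for the induced maps, and let $\mu_e$ (roots $b_t$) and $\nu_e$ (roots $c_t$) be the ratios of $w_1$ and $w_2$; by the previous paragraph all three ratio-families have $\gcd = 1$ in each component. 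Length-compatibility gives $\lambda_e\,\phi_i([t]) = \nu_e c_t$ and $\mu_e\,\phi_j(b_t) = \nu_e c_t$ for every $e \in t$. The crux is a rigidity statement: if $p, q$ are nonzero elements of a torsion-free abelian group and positive integers $\lambda_e, \nu_e$ with $\gcd_e\lambda_e = \gcd_e\nu_e = 1$ satisfy $\lambda_e p = \nu_e q$ for all $e$, then $\lambda_e = \nu_e$ for all $e$ and $p = q$ — indeed the ratios $\lambda_e/\nu_e$ are all equal and the two $\gcd$ conditions pin this common value to $1$. Applying this in the (torsion-free) groupification $\Mbar_{w_2}^{\gp}$ forces $\lambda_e = \nu_e = \mu_e$ and $\phi_i([t]) = c_t = \phi_j(b_t)$, whence $\psi : \N^T \to \Mbar_{w_1}$, $[t] \mapsto b_t$, is the unique length-compatible map with $\phi_j \circ \psi = \phi_i$. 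Thus $z$ is minimal. To conclude that the minimal objects are \emph{exactly} the basic ones, I would combine this with two formal facts: by the factorisation every $r$-rich $C/\Spec k$ admits a map to its basic model over $\id$ (condition~(1) of Theorem~\ref{thm:gillam}), while directly from Definition~\ref{def:min} any morphism between two minimal objects over $\id_{\Spec k}$ is an isomorphism. Hence a minimal object maps isomorphically to its basic model and is itself basic.

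\emph{General base.} For arbitrary $S$ I would argue that minimality is étale-local and detected on the characteristic sheaf. Condition~(1) of Theorem~\ref{thm:gillam} provides a canonical map $C \to C^{\min}$ over $\id_S$ to a minimal object, and $C/S$ is minimal iff this map is strict, i.e.\ iff $\Mbar^{\min}_S \to \Mbar_S$ is an isomorphism of sheaves of monoids. Such a map is an isomorphism iff it is so on every geometric stalk, and the stalk at a strict geometric point $s$ is exactly the comparison map attached to the fibre $C_s$ — an isomorphism precisely when $C_s$ is basic, by the field case. Hence $C/S$ is minimal iff every fibre is basic $r$-rich.

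The main obstacle I expect is not this characteristic-level combinatorics — which collapses to the clean $\gcd$-one rigidity above — but the passage from characteristic monoids to genuine morphisms of log curves. One must check that a length-compatible map of characteristics lifts \emph{uniquely} to a morphism of base log structures and of the associated log curves; the potential unit ambiguity is eliminated by the requirement that the node smoothing parameters (the lifts of the edge lengths to $M_S$ determining the log structure on $C$, cf.\ \cite[Table~1.8]{kato2000}) be respected. This verification, together with the étale descent needed to globalise the fibrewise statement, is precisely the technical content relegated to Appendix~\ref{sec:appendix}.
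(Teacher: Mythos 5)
Your proposal is correct, and its mathematical core coincides with the paper's; the difference is in how the formal part is organised. The paper (Appendix~\ref{sec:appendix}) first proves that the basic objects form a \emph{weakly terminal} subcategory --- every $r$-rich structure admits a map to a basic one, unique up to unique isomorphism (Proposition~\ref{app:prop:richbasicinitial}, where the reduction to characteristic monoids is done via the equivalence between log structures mapping to $M$ and sharp monoids mapping to $\Mbar$, using $M' = M \times_{\Mbar} \Mbar'$) --- and then deduces ``minimal $\Leftrightarrow$ basic'' from a purely categorical statement (Lemma~\ref{app:lem:weaklyterminalminimal}). You instead verify Gillam's span condition (Definition~\ref{def:min}) directly for the basic object; your $\gcd$-rigidity lemma (if $\lambda_e p = \nu_e q$ in a torsion-free group with $\gcd_e \lambda_e = \gcd_e \nu_e = 1$, then $\lambda_e = \nu_e$ and $p = q$) is exactly the computation the paper compresses into ``Locally, this is immediate'', and it is valid: for a sharp fs monoid, saturation forces any torsion of $\Mbar^{\gp}$ into $\Mbar$, where sharpness kills it, so $\Mbar^{\gp}$ is indeed torsion-free. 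Your two ``formal facts'' (every object maps to its basic model; a morphism between minimal objects over the identity is an isomorphism, proved by the left-inverse juggling) are precisely the content of the paper's abstract lemma. So your route makes the monoid combinatorics explicit where the paper is terse, while compressing the category theory that the paper spells out; the trade is a fair one. Both treatments defer the same two technical points, which you flag accurately: lifting characteristic-level maps to genuine morphisms of log curves, and gluing the fibrewise basic model over a general base (your third part presupposes the existence of $C^{\min}$ over $S$, i.e.\ condition~(1) of Theorem~\ref{thm:gillam}, which is exactly this gluing). One small correction on the first point: in the paper the unit ambiguity is eliminated not by appealing to node smoothing parameters but by working in the category of log structures \emph{over} $M_{w_2}$, where the fiber-product description makes the lift of a characteristic-level map unique among maps making the span commute --- which is all that Definition~\ref{def:min} requires; your mechanism can be made to work, but the fiber-product formulation is the cleaner one and is what Proposition~\ref{app:prop:richbasicinitial} actually uses.
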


\begin{proposition}[Proposition~\ref{app:prop:mrlc}]
\label{prop:mrlc}
The category $\RLC_r$ satisfies the conditions of the descent lemma (Theorem~\ref{thm:gillam}).
\end{proposition}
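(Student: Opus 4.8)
The plan is to verify the two conditions of Theorem~\ref{thm:gillam} directly, feeding in the description of the minimal objects of $\RLC_r$ supplied by Proposition~\ref{def:richminimal}. Since both conditions, as well as minimality itself, are étale-local on the base and are tested over strict geometric points, I would first pass to an étale cover of $S$ on which the underlying prestable curve has constant topological type, so that the dual graph $\Gamma = (V,E)$, its set of cuts, and its partition $T$ into circuit-connected components are all constant.

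For the first condition I would construct, for an $r$-rich log curve $w = C/S$, a minimalisation $z = C_0/S_0$ on the same underlying curve. By $r$-richness together with Proposition~\ref{def:richminimal}, the length map $\N^E \to \Mbar_S$ factors canonically as the root map $\N^E \to \N^T$ followed by $\rho\colon \N^T \to \Mbar_S$, $t \mapsto a_t$. Étale-locally I would lift the roots $a_t$ to sections of $M_S$ and let $S_0$ be the log structure associated to the resulting chart $\N^T \to \Ocal_{\ul S}$; the factorisation then yields the morphism $i\colon w \to z$ over the identity on $\ul S$, and $z$ is minimal by Proposition~\ref{def:richminimal} since $\N^T \to \Mbar_{S_0}$ is an isomorphism at every strict geometric point.

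For the second condition, let $z$ be minimal and $i\colon w \to z$ a morphism of $r$-rich log curves, so that $\ul{C_w} = \ul{C_z}\times_{\ul{S_z}}\ul{S_w}$ and the two curves share the combinatorial type $(\Gamma, T)$ over corresponding points. If $Fi$ is strict then $\Mbar_{S_w}$ is pulled back from $\Mbar_{S_z} = \N^T$, so $w$ is again basic $r$-rich over every geometric point and hence minimal. Conversely, if $w$ is minimal, both characteristic monoids are $\N^T$ and the induced map $\N^T \to \N^T$ intertwines the two root maps; tracing through that this forces an isomorphism of characteristics, so that $Fi$ is strict, is the bookkeeping step that must be carried out with care.

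The step I expect to be the main obstacle is the gluing in the first condition, namely checking that the locally defined root charts assemble into a single log structure $S_0$ with the correct universal property --- equivalently, that the pointwise-minimal characteristic $\N^T$ varies in a genuine (upper semicontinuous) sheaf over the $r$-rich locus. This is delicate precisely because circuit-connected components, through which $\N^T$ and the roots are defined, can split under the contraction maps $\Cf_t \rightsquigarrow \Cf_s$ attached to étale specialisations, which naively threatens semicontinuity. The resolution, and the technical heart of the argument, is that $r$-richness restricts which contractions can occur in the family: a cut whose edges share a common root cannot be partially smoothed, so the generisations that would break the gluing are exactly the ones richness forbids. Reformulating the condition through cuts, which behave well under contraction (cf.\ Lemma~\ref{lem:graphcuts} and Lemma~\ref{lem:61015}), is the cleanest route to making this precise; this is the content deferred to Proposition~\ref{app:prop:mrlc}.
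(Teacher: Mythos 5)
Your skeleton is essentially the paper's: verify Gillam's two conditions, characterise minimal objects via the root map $\N^E \to \N^T$, build the minimalisation locally from root charts and glue, and handle the second condition by the strict versus over-the-identity dichotomy. But as written the proposal has a genuine gap, and it is a structural one: both places where you acknowledge real work to be done are left undone, and the second deferral is circular. You write that making the gluing precise ``is the content deferred to Proposition~\ref{app:prop:mrlc}'' --- but Proposition~\ref{app:prop:mrlc} \emph{is} the statement you are proving (Proposition~\ref{prop:mrlc} in the main text is the same assertion, proved in the appendix). A proof cannot outsource its hardest step to itself; likewise, the converse direction of condition (2) (``minimal $\Rightarrow$ strict'') is flagged as ``bookkeeping to be carried out with care'' but never carried out. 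That bookkeeping is doable (both characteristics are $\N^T$ for the \emph{same} $T$, since the underlying curves are pulled back, and a map $\N^T \to \N^T$ commuting with the two root maps is forced to be the identity after tensoring with $\Q$), but it has to appear.

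On the gluing itself, your diagnosis is only half of the argument. Richness does imply that a circuit-connected component cannot be \emph{partially} contracted: all its lengths are $\lambda_e a_t$ with $\lambda_e \geq 1$, and in a sharp integral monoid $\lambda_e \bar a_t = 0$ forces $\bar a_t = 0$, so the whole component dies at once. But richness does \emph{not} forbid contracting entire components, and you still must show that doing so neither splits nor merges the surviving components (otherwise ``basic at a point'' fails to propagate to generisations, and your local charts $\N^T$ do not match up). This is a separate graph-theoretic fact --- distinct circuit-connected components meet in at most one vertex and their incidence structure is a tree, so collapsing whole components to points never identifies two vertices of a survivor --- and without it the semicontinuity of $T$ is unproved. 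Note that the paper avoids confronting this head-on: it constructs the basic structure as the initial object of a category $GS$ of sharp monoid maps under $\N^E$, constructs it locally, and glues purely by \emph{uniqueness up to unique isomorphism}; it then deduces ``minimal $\Leftrightarrow$ basic'' from this weakly terminal property via the categorical Lemma~\ref{app:lem:weaklyterminalminimal}, and condition (2) falls out of the same uniqueness. Your proposal runs the logic in the opposite direction, taking Proposition~\ref{def:richminimal} as input; that is not circular (that proposition does not depend on Proposition~\ref{prop:mrlc}), but it means the uniqueness statement that actually powers both the gluing and condition (2) in the paper is nowhere established in your write-up.
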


Then by Theorem~\ref{thm:gillam} we get the following proposition.

\begin{proposition}
\label{def:mrlc}
The subcategory $\Min(\RLC_r)$ of minimal objects of $\RLC_r$ is a stack over $\Sch$ and together with its natural log structure represents $\RLC_r$.
\end{proposition}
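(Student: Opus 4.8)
The plan is to obtain Proposition~\ref{def:mrlc} as a formal consequence of Gillam's representability criterion (Theorem~\ref{thm:gillam}) applied to the CFG $\RLC_r \to \LogSch$, once the two hypotheses of that criterion have been verified. Those hypotheses --- existence, for each object, of a map to a minimal object lying over the identity of underlying schemes, and the characterisation of minimality by strictness of the structure maps --- are exactly the content of Proposition~\ref{prop:mrlc}, whose proof (together with the explicit description of minimal objects in Proposition~\ref{def:richminimal}) is the technical heart of the matter and is deferred to the appendix. So the first step is simply to invoke Proposition~\ref{prop:mrlc}.

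Granting this, Theorem~\ref{thm:gillam} yields directly that $\RLC_r$ is represented by a log CFG, and moreover identifies the representing object as the subcategory $\Min(\RLC_r)$ of minimal objects equipped with its natural log structure. The explicit form of the minimal objects from Proposition~\ref{def:richminimal} --- namely the factorisation $\N^E \to \N^T \to \Mbar_k$ through the root map, with $\N^T \to \Mbar_k$ an isomorphism --- then makes $\Min(\RLC_r)$ concrete, but no further computation is needed for the representability statement itself.

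The one point that does not come entirely for free from the minimality machinery is the assertion that $\Min(\RLC_r)$ is a \emph{stack} over $\Sch$, rather than merely a CFG. I would deduce this from the fact that $\RLC_r$ is a stack over $\LogSch$: by Definition~\ref{def:rlc} it is the full sub-CFG of the stack $\LC$ of log curves cut out by the condition of $r$-richness, and this condition is phrased purely in terms of the tropical curve at strict geometric points, hence is stable under base change and can be checked strict-\'etale-locally on $S$. A full subcategory of a stack that is closed under pullback and local on the base is again a stack, so $\RLC_r$ is a substack of $\LC$; Gillam's theorem then delivers an honest stack $\Min(\RLC_r)/\Sch$.

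The main obstacle in the whole line of argument is therefore not located in this proposition at all, but in the cited Proposition~\ref{prop:mrlc}: verifying Gillam's two axioms requires knowing precisely which $r$-rich log curves are minimal, checking that every $r$-rich log curve admits a map to a minimal one over the identity of underlying schemes, and confirming that strictness detects minimality. Relative to that input, the proof of Proposition~\ref{def:mrlc} is a direct citation of Theorem~\ref{thm:gillam}.
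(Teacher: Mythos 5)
Your proposal matches the paper's own argument exactly: the paper derives Proposition~\ref{def:mrlc} immediately from Proposition~\ref{prop:mrlc} (whose proof is deferred to the appendix as Proposition~\ref{app:prop:mrlc}) by invoking Gillam's representability criterion, Theorem~\ref{thm:gillam}, which identifies the representing object as the category of minimal objects with its natural log structure. Your additional remark on why $\Min(\RLC_r)$ is an honest stack over $\Sch$ is a reasonable elaboration of a point the paper leaves implicit, but it does not change the route.
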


There is the natural inclusion of CFGs $\RLC_r \to \LC$. We will later see in Section~\ref{sec:compact} that for $r < \infty$ this is an open embedding (cf.  Definition~\ref{def:logopen}) followed by a log blow up (and hence a log \'etale monomorphism), but it is not an open embedding itself. The issue is that minimal $r$-rich log curves are not necessarily minimal log curves. There is always a map from a minimal $r$-rich log curve to a minimal log curve lying over the identity of the underlying algebraic scheme, but this map is not necessarily strict. The following proposition shows that this non-strictness is the only issue here.

\begin{proposition}
\label{prop:richopen}
Let $\pi : C \to S$ be a log curve, and let $U$ be the full subcategory of objects $T$ in $\Sch/\ul{S}$ such that the strict pullback $C_T$ is $r$-rich. Then $U \to S$ is an open embedding.
\end{proposition}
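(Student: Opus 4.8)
The plan is to exhibit an honest open subscheme $\ul V \subseteq \ul S$ and to prove that $C_T$ is $r$-rich if and only if $T \to \ul S$ factors through $\ul V$; this identifies the sieve $U$ with (the sieve of) $\ul V$ and gives the open embedding. Take $\ul V$ to be the set of points $s \in \ul S$ whose geometric fibre $C_{\bar s}$ is $r$-rich. Since $r$-richness is by definition tested at strict geometric points, and since for a geometric point $\bar t \to T \to \ul S$ with image $\bar s$ the strict pullback $(C_T)_{\bar t}$ has the same tropical curve as $C_{\bar s}$ (strict pullback along the field extension $\bar t \to \bar s$ preserves the dual graph, the characteristic monoid and the length function), the fibre $(C_T)_{\bar t}$ is $r$-rich exactly when $s \in \ul V$. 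Hence $C_T$ is $r$-rich if and only if the image of $T$ lies set-theoretically in $\ul V$; once $\ul V$ is known to be open, this is the same as $T \to \ul S$ factoring through $\ul V$. So the whole statement reduces to showing that $\ul V$ is open.

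The first substantive step is to show that $\ul V$ is stable under generisation. Let $s \in \ul V$ and let $\eta$ be an \'etale generisation of $s$, so that $s$ lies in the closure of $\eta$. By the conventions on tropical curves, the contraction $\Cf_{\bar s} \rightsquigarrow \Cf_{\bar\eta}$ collapses the set $F$ of edges of $\Gamma_s$ whose length becomes invertible at $\eta$, and the length function of $\Cf_{\bar\eta}$ is the composite of that of $\Cf_{\bar s}$ with the generisation homomorphism on characteristic monoids $g \colon \Mbar_{S,\bar s} \to \Mbar_{S,\bar\eta}$ (the quotient of $\Mbar_{S,\bar s}$ by the face generated by the lengths of edges in $F$). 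By the behaviour of cuts under contraction (cf. Lemma~\ref{lem:graphcuts}), every cut $c$ of $\Gamma_\eta$ is also a cut of $\Gamma_s$, and it is necessarily disjoint from $F$. Since $C_{\bar s}$ is $r$-rich, the lengths $\{\ell_s(e)\}_{e \in c}$ are $r$-close in $\Mbar_{S,\bar s}$; and any monoid homomorphism preserves $r$-closeness, because from $\lambda_i a = a_i$ one gets $\lambda_i\, g(a) = g(a_i)$ with the same divisors $\lambda_i \mid r$. Hence $\{\ell_\eta(e)\}_{e\in c} = \{g(\ell_s(e))\}_{e\in c}$ is $r$-close in $\Mbar_{S,\bar\eta}$. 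As this holds for every cut of $\Gamma_\eta$, the fibre $C_{\bar\eta}$ is $r$-rich, i.e.\ $\eta \in \ul V$.

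It remains to deduce openness. By standard limit arguments we may assume $\ul S$ is Noetherian (indeed of finite type over $\Z$), so that the fs log scheme $S$ admits a finite stratification into locally closed subschemes on which $\Mbar_S$ is locally constant. On such a stratum the whole tropical curve of $C$ — the dual graph, the characteristic monoid, and the length function — is locally constant, so $r$-richness is a constant condition on each stratum. Therefore $\ul V$ is a union of strata, hence constructible; being also stable under generisation, it is open (EGA~IV, 1.10.1). Combined with the reduction above, this proves that $U$ is represented by the open subscheme $\ul V$, so $U \to S$ is an open embedding.

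The step I expect to be the crux is the interaction between cuts and contraction together with the preservation of $r$-closeness under the generisation homomorphism: this is precisely why the definition of $r$-richness is framed in terms of cuts rather than circuit-connected components, which (as noted in the preceding remark) need not survive edge contraction. Once one grants the standard fact that the tropical curve of a log curve is locally constant along log strata, the stratification and constructibility argument is routine.
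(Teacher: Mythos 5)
Your proof is correct, but it takes a genuinely different route from the paper's. The paper argues \'etale-locally: it passes to a nuclear chart (via \cite{poiret2020}), uses the circuit-connected-component reformulation of richness (Lemma~\ref{lem:61015}) to encode $r$-richness at a rich geometric point $\bar s$ in finitely many equalities $\lambda_{e'}\ell(e)=\lambda_e\ell(e')$ of sections of $\Mbar_S$, spreads each such equality from the stalk at $\bar s$ to an open neighbourhood, and intersects these finitely many opens to produce a neighbourhood of $\bar s$ contained in the rich locus. You instead identify $U$ with the pointwise locus $\ul V$, prove stability of $\ul V$ under generisation by pushing the $r$-closeness witnesses forward along the cospecialisation homomorphism on characteristic monoids (using Lemma~\ref{lem:graphcuts}), prove constructibility via a stratification on which $\Mbar_S$ --- hence the whole tropical curve --- is locally constant, and conclude with EGA~IV,~1.10.1 after a Noetherian reduction. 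The combinatorial core is the same in both proofs (cuts survive contraction, and closeness-type relations are preserved by the maps on characteristic monoids induced by generisation), but the packaging differs: your version works directly with cuts and avoids both circuit-connected components and the implicit step in the paper that pairwise proportionality $\lambda_{e'}\ell(e)=\lambda_e\ell(e')$ recovers honest $r$-closeness (which uses saturatedness), at the price of the stratification, constructibility and limit arguments, which the paper's direct neighbourhood construction does not need. Two minor points: the cospecialisation map is the quotient by the face of \emph{all} elements becoming invertible at $\eta$, which may be strictly larger than the face generated by the lengths of the contracted edges --- this does not affect your argument, which only uses that it is a monoid homomorphism compatible with the length functions; and the ``standard fact'' you grant, local constancy of the tropical curve along log strata, is precisely what the paper outsources to the nuclear charts of \cite{poiret2020}, so a citation there would make your proof self-contained to the same degree as the paper's.
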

\begin{proof}
We will prove this proposition \'etale locally. By \cite[Lemma~3.40]{poiret2020} we can assume our curve $C/S$ is nuclear as defined in \cite[Definition~3.39]{poiret2020}. Now let $\overline{s}$ be a strict geometric point where the scheme is rich, and shrink $S$ such that every tropical curve of a point of $S$ is a contraction of the tropical curve at $\overline{s}$. Let $E$ then be the edge set of the tropical curve $\Cf_s$. Then we can describe the entire tropical curve $\Cf/S$ by giving for each geometric point $\overline{t}$ of $S$ a map $E \to \Mbar_{\overline{t}}$, with the image of an edge $e$ vanishing exactly if $e$ gets contracted over $\overline{t}$. Let $T$ be the set of circuit connected components of $E$, and let $q : \N^E \to \N^T$ be the root map at $\overline{s}$, as defined in Proposition~\ref{def:richminimal}. For $e \in E$, let $l_e$ denote the length of edge $e$ as a global section of the characteristic monoid $\Mbar_S$. For all pairs $e,e' \in E$ in the same circuit-connected component there are integers $\lambda_e,\lambda_{e'}$ with $\lambda_{e'}q(e) = \lambda_e q(e')$. Then we have $\lambda_{e'}l_{e} = \lambda_e l_{e'}$ in the stalk $\Mbar_{\overline{s}}$, hence this equality also holds in some open neighbourhood $V_{e,e'}$ of $\overline{s}$. Then we see that $\bigcap_{t \in T} \bigcap_{e,e' \in t} V_{e,e'}$ is an open neighbourhood of $\overline{s}$ that is contained in the $r$-rich locus. Hence, the $r$-rich locus $U$ is open.

Now by definition of $U$, the open $U$ has the universal property that for any strict map $f: T \to S$, the pullback $C_T$ is $r$-rich if and only if $f$ factors through $U$.
\end{proof}

\begin{corollary}
The map $\RLC_r \to \LC$ is a log \'etale monomorphism.
\end{corollary}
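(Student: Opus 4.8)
The plan is to verify the two properties separately, exploiting that essentially all of the content is already contained in Proposition~\ref{prop:richopen}.

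First I would dispose of the monomorphism claim, which is formal. Since $\RLC_r$ is by definition (Definition~\ref{def:rlc}) the sub-CFG of $\LC$ consisting of the $r$-rich log curves, and richness is a condition on objects rather than on morphisms, the inclusion functor $\RLC_r \to \LC$ is a full subcategory inclusion and hence fully faithful. A fully faithful map of CFGs over $\LogSch$ is a monomorphism, so this part requires no further argument.

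Next I would establish that the map is log \'etale. By the remark following Definition~\ref{def:gillam:log} (the log-geometric analogue of \cite[Theorem~4.6]{olsson}), it suffices to show that the induced map $\Log(\RLC_r) \to \Log(\LC)$ of CFGs over $\Sch$ is \'etale; in fact I would prove the stronger statement that it is an open embedding, which is exactly what Definition~\ref{def:logopen} calls being \emph{log open}. To see this, fix a scheme $\ul T$ together with a map $\ul T \to \Log(\LC)$, that is, a log curve $\pi: C \to T$ for some log structure on $T$. Because $\Log(\cdot)$ retains only the strict morphisms (Definition~\ref{def:gillam:log}), the fiber product $\ul T \times_{\Log(\LC)} \Log(\RLC_r)$ is precisely the full subcategory of $\Sch/\ul T$ consisting of those strict $T' \to T$ for which the strict pullback $C_{T'}$ is $r$-rich. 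This is exactly the subcategory $U$ of Proposition~\ref{prop:richopen}, shown there to be an open subscheme of $\ul T$ carrying the required universal property. Hence the base change of $\Log(\RLC_r) \to \Log(\LC)$ along any map from a scheme is an open immersion, so the map itself is an open embedding, and in particular \'etale.

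Combining the two parts yields that $\RLC_r \to \LC$ is a log \'etale monomorphism (indeed it is log open in the sense of Definition~\ref{def:logopen}, even though it is not itself an open embedding since minimal $r$-rich log curves need not be minimal log curves). The only genuine input is Proposition~\ref{prop:richopen}; the step I would check most carefully, and the one most likely to conceal a subtlety, is the identification of the fiber product $\ul T \times_{\Log(\LC)} \Log(\RLC_r)$ with the locus $U$. This identification hinges on the fact that passing to $\Log(\cdot)$ discards the non-strict morphisms, so that the comparison is genuinely along \emph{strict} pullbacks, matching the hypothesis of Proposition~\ref{prop:richopen}.
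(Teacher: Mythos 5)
Your proof is correct and follows essentially the same route as the paper: both reduce log \'etaleness to \'etaleness of $\Log(\RLC_r) \to \Log(\LC)$ via \cite[Theorem~4.6]{olsson}, deduce log openness from Proposition~\ref{prop:richopen}, and obtain the monomorphism property from $\RLC_r$ being a sub-CFG of $\LC$. The only difference is that you spell out the fiber-product identification and the fully-faithfulness argument, which the paper leaves implicit.
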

\begin{proof}
Note that for a map of stacks $f: X \to Y$ over $\LogSch$, we have that $f$ is log \'etale if and only if $\Log(X) \to \Log(Y)$ is \'etale by \cite[Theorem~4.6]{olsson}. By Proposition~\ref{prop:richopen}, we then have that $\RLC_r \to \LC$ is log open, hence it is log \'etale. Also, it is a sub-CFG, which completes the proof.
\end{proof}

This finishes the proof of Theorem~\ref{thm:rich}.


\begin{remark}
At this point we could prove that $\Min(\RLC_r)$ is a smooth algebraic stack by applying \cite[Theorem~B.2]{wise2016}. However, we postpone this to Section~\ref{subsec:rrichagain}, when we will have shown that $\ARLC_r$ is a log blowup of $\LC$ and $\Min(\RLC_r)$ is an open inside $\Min(\ARLC_r)$, as the algebraicity of $\Min(\RLC_r)$ will then follow for free.
\end{remark}

\begin{example}
Let $C/\overline{s}$ be a compact type prestable curve over a geometric point. Then by definition the dual graph $(V,E)$ is a tree. Then the set of circuit-connected components is $E$ itself, and one can check the minimal log structure is in fact the minimal $r$-rich log structure. Letting $X$ denote the stack of compact type curves, it follows that the map $\Min(\RLC_r) \times_{\Min(\LC)} X \to X$ is an isomorphism. 
\end{example}

\begin{example}
\label{ex:richalgclosed}
Let $C/S$ be a prestable curve over a geometric point $S = \Spec k$. Let $E$ be the edge set of the corresponding dual graph, and let $T$ be the set of circuit-connected components. A $1$-rich log structure $(M_C,M_S)$ on $C$ is completely specified by a map of log structures from the minimal log structure $\N^E \oplus k^*$ on the base to $M_S$. As we are working over an algebraically closed field, that would just be a map of monoids $\N^E \oplus k^* \to \N^T \oplus k^*$ that, on the level of characteristic monoids, is the linearisation of the quotient map $c : E \to T$.

These maps are specified by giving $v \in (k^*)^E$, and denote by $\phi_v$ the map sending $(e,1) \in \N^E \oplus k^*$ to $(c(e),v_e)$. An automorphism of the log structure $\N^T \oplus k^*$ is given by $w \in (k^*)^T$, by for $t \in T$ sending $(t,1) \in \N^T \oplus k^*$ to $(t,w_t)$. These automorphisms induce isomorphisms $\phi_v \to \phi_{v\cdot c^*(w)}$. Then the minimal rich log structures correspond exactly to the maps $\phi_v$ up to isomorphism.

Three examples: for a $1$-gon (a graph with one vertex and one edge), all choices of $\phi_v$ are isomorphic, and there is only one minimal rich log structure (up to unique isomorphism). For a $n$-gon in general, we get $(k^*)^n/k^*$ isomorphism classes, where the action of $k^*$ is coordinate-wise multiplication. In full generality, these explicit constructions give a natural bijection between the minimal rich log structures on $C/S$ and $\G_m^E/\G_m^T$.

For $r > 1$, there is more choice. For ease of notation, assume there is one circuit-connected component. Then a minimal $r$-rich log structure $M$ on $C/S$ has $\Mbar_k = \N$, with the length function $E \to \N$ sending every edge to a divisor $\lambda_e$ of $r$ such that $\gcd_{e \in E} \lambda_e = 1$. Fixing a choice of $(\lambda_e)_{e \in E}$, we get that the minimal rich log structures are naturally in bijection with $[\G_m^E/\G_m]$ where $\G_m$ acts with weight $\lambda_e$ on the $e$'th coordinate. As the $\lambda_e$ are coprime, this is isomorphic to $\G_m^{|E|-1}$.
\end{example}

\section{Comparison to work by Main\`o}
\label{sec:maino}
Although it will follow from Chapter~7 in \cite{biesel2019} and Section~\ref{sec:compact}, we will show here directly that our notion of basic $1$-rich log structures on a curve coincides with the notion of enriched structures on a curve when working over an algebraically closed field.
. We first give maps between the sets of basic $1$-rich log structures and the sets of enriched structures on a prestable curve. Throughout this section, we fix $C/\Spec k$ a prestable curve with $k$ algebraically closed, with dual graph $\Gamma = (V, E)$ and components $(C_v)_{v \in V}$, and we let $C_v^c$ be $\bigcup_{w \neq v} C_w$.
For enriched structures, we use Definition~\ref{def:richoverk} and the equivalent following definition paraphrased from \cite{maino1998}. 
\begin{proposition}[\protect{\cite[Proposition~2.14]{maino1998}}]
Let $C/\Spec k$ be as described above. Then a sequence of line bundles $(\Lcal_v)_{v \in V}$ is an enriched structure if and only if the following two conditions hold:
\begin{enumerate}
  \item for every vertex $v \in V$ we have $\Lcal_v|_{C_v} \cong \Ocal_{C_v}(-C_v \cap C_v^c)$ and $\Lcal_v|_{C_v^c} \cong \Ocal_{C_v}(C_v \cap C_v^c)$;
  \item the tensor product $\prod_{v \in V} \Lcal_v$ is isomorphic to $\Ocal_C$.
\end{enumerate}
\end{proposition}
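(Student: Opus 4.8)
The plan is to prove the two implications separately: the forward implication is an intersection-theoretic computation on the regular total space, while the converse is a torsor comparison whose single genuinely hard input is a smoothing construction.

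\emph{Forward direction ($(\Lcal_v)$ enriched $\Rightarrow$ (1),(2)).} Fix a regular smoothing $\Cf/\Spec k[[t]]$ with $\Lcal_v \cong \Ocal_\Cf(C_v)|_C$. Since $\Cf$ is regular of dimension $2$ and the reduced special fibre is the principal Cartier divisor $C = \sum_v C_v = \operatorname{div}(t)$, we have $\Ocal_\Cf(C) \cong \Ocal_\Cf$. Restricting the identity $\bigotimes_v \Ocal_\Cf(C_v) = \Ocal_\Cf(C) \cong \Ocal_\Cf$ to $C$ gives $\bigotimes_v \Lcal_v \cong \Ocal_C$, which is (2). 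For (1), write $C = C_v + C_v^c$ with $C_v^c = \sum_{w\neq v} C_w$; restricting $\Ocal_\Cf(C)\cong\Ocal_\Cf$ to $C_v$ yields $\Ocal_\Cf(C_v)|_{C_v} \tensor \Ocal_\Cf(C_v^c)|_{C_v} \cong \Ocal_{C_v}$. As $C_v^c$ meets $C_v$ transversally in the nodes $C_v \cap C_v^c$ and contains no component of $C_v$, one has $\Ocal_\Cf(C_v^c)|_{C_v} \cong \Ocal_{C_v}(C_v\cap C_v^c)$, whence $\Lcal_v|_{C_v} \cong \Ocal_{C_v}(-C_v\cap C_v^c)$. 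The same transversality gives $\Lcal_v|_{C_v^c}\cong \Ocal_{C_v^c}(C_v\cap C_v^c)$, which is the second half of (1) (living, as it must, on $C_v^c$).

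\emph{Converse direction ((1),(2) $\Rightarrow$ enriched).} I would argue by a torsor comparison. First, every nodal curve over $k=\bar k$ admits a regular smoothing (smooth each node as $xy=t$ and glue; the obstruction vanishes), so by the forward direction the set $\Ecal$ of sequences satisfying (1) and (2) is non-empty, and the subset $\Ecal_0\subseteq\Ecal$ of genuine enriched structures is non-empty. Next let $G$ be the group of sequences $(\Mcal_v)_v$ of line bundles on $C$ with $\Mcal_v|_{C_v}\cong\Ocal_{C_v}$, $\Mcal_v|_{C_v^c}\cong\Ocal_{C_v^c}$ and $\bigotimes_v\Mcal_v\cong\Ocal_C$. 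Tensoring exhibits $\Ecal$ as a torsor under $G$: if $(\Lcal_v),(\Lcal'_v)$ both satisfy (1),(2) then their ratio lies in $G$, and conversely twisting a solution by an element of $G$ preserves (1),(2). Thus it suffices to show $\Ecal_0$ is stable under the $G$-action; as a non-empty $G$-stable subset of the $G$-torsor $\Ecal$ it would then equal $\Ecal$, proving every sequence satisfying (1),(2) is enriched.

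The heart of the matter, and the step I expect to be the main obstacle, is the realisation of the $G$-action by smoothings: given a smoothing $\Cf_0$ realising $(\Lcal_v)$ and a class $(\Mcal_v)\in G$, one must produce a new regular smoothing $\Cf_1$ with $\Ocal_{\Cf_1}(C_v)|_C\cong\Lcal_v\tensor\Mcal_v$. The idea is to re-glue the formal neighbourhoods of the nodes: at a node $n_e$ for an edge $e=\{v,w\}$ the total space is $\Spec k[[x,y]]$ with $t=xy$, and the freedom to alter $\Ocal_\Cf(C_v)|_C$ is encoded by a unit attached to $e$. One then identifies $G$ with a quotient of $(k^*)^E$ and matches it with the effect of these regluings on the normal bundles, so that every element of $G$ is realised; the bookkeeping here --- compatibility of the gluing units at each node, and the verification that the reglued family is still a regular smoothing --- is the technical crux. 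Once this surjectivity onto $G$ is in place, the torsor comparison of the previous paragraph closes the argument.
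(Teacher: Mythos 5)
The paper itself does not prove this proposition --- it is imported verbatim as \cite[Proposition~2.14]{maino1998} --- so there is no internal argument to compare yours against; I judge your proposal on its own terms. Your forward direction is complete and correct: regularity of $\Cf$ is what makes each $C_v$ Cartier, $\operatorname{div}(t) = \sum_v C_v$ gives $\Ocal_\Cf(C)\cong\Ocal_\Cf$ and hence condition (2), and transversality of the two branches at each node gives both halves of condition (1). The torsor reduction in the converse is also sound as far as it goes: the set $\Ecal$ of sequences satisfying (1)--(2) is indeed a torsor under your group $G$, the set $\Ecal_0$ of genuine enriched structures is non-empty because a proper nodal curve over $k=\bar k$ admits a regular smoothing, and a non-empty $G$-stable subset of a $G$-torsor is the whole torsor.

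However, the converse as written has a genuine gap, and you flag it yourself: $G$-stability of $\Ecal_0$ is asserted, not proved. That statement --- given a regular smoothing realising $(\Lcal_v)$ and any $(\Mcal_v)\in G$, there exists a regular smoothing realising $(\Lcal_v\otimes\Mcal_v)$ --- is the \emph{entire} mathematical content of the implication (1),(2) $\Rightarrow$ enriched; everything else in your argument is formal bookkeeping. Closing it requires three separate facts, none of which your sketch establishes: (a) the enriched structure attached to a regular smoothing depends only on the first-order data of the smoothing at the nodes, i.e.\ on the induced nonzero elements of $T'_e\otimes T''_e$ for each node $e$ (this is what makes ``re-gluing by units'' meaningful at all); (b) every choice of such first-order data is realised by an honest regular smoothing --- the clean way is not literal formal re-gluing of the surface, which is delicate, but mapping $\Spec k[[t]]$ into the versal deformation space of $C$ with prescribed tangent vector transverse to each node divisor $D_e$; and (c) the resulting homomorphism $(k^*)^E \to G$ is surjective. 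Point (c) is a genuine graph-cohomological computation involving orientation signs and quotients by rescalings of trivialisations (it is where the $(k^*)^E/(k^*)^T$-torsor structure of Example~\ref{ex:richalgclosed} comes from), and it is exactly where a careless argument can fail: note that $C_v^c$ may be disconnected, so the rescaling group at a vertex is $(k^*)^{1+|\pi_0(C_v^c)|}$ rather than $k^*\times k^*$. Until (a)--(c) are carried out, your argument proves only that enriched structures exist, not that every sequence satisfying (1) and (2) is one.
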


Now we can introduce the enriched structure associated to a basic $1$-rich log structure.

\begin{definition}
\label{def:assocenriched}
Let $M = (M_C,M_k)$ be a basic $1$-rich log structure on $C/\Spec k$. For $v \in V$, let $\delta_{M,v}$ be a piecewise linear function on $\Gamma$ taking values in $M^\gp$ that has slopes $-1$ going out from $v$ and slopes $0$ everywhere else, and let $\Lcal_v = O_C(-\delta_{M,v})$. Define $M^{as}$ to be the family $(\Lcal_v)_{v \in V}$.
\end{definition}
\begin{proposition}
Let $M$ be a basic $1$-rich log structure on $C/\Spec k$. The family $M^{as}$ is an enriched structure on $C$.
\end{proposition}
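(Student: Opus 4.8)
The plan is to verify the two conditions of the cited criterion (\cite[Proposition~2.14]{maino1998}) directly, using the standard dictionary between piecewise linear functions on the tropical curve $\Gamma$ and line bundles on the log curve $C$. Under this dictionary the line bundle $\Ocal_C(\alpha)$ attached to a piecewise linear function $\alpha$ restricts on a component $C_w$ to $\Ocal_{C_w}\!\left(-\sum_{e \ni w} s_{w,e}\, p_e\right)$, where $p_e$ is the node corresponding to an edge $e$ incident to $w$ and $s_{w,e}$ is the outgoing slope of $\alpha$ from $w$ along $e$. Before computing anything I would record where the hypothesis enters: the function $\delta_{M,v}$ of Definition~\ref{def:assocenriched} exists precisely because $M$ is $1$-rich. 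A piecewise linear function with prescribed integer slopes exists if and only if its slopes satisfy the cycle condition $\sum_e (\text{slope along } e)\cdot \ell(e) = 0$ around every circuit; for $\delta_{M,v}$ the only nonzero contributions around a circuit through $v$ come from the two edges $e_1,e_2$ of that circuit meeting $v$, and they contribute $\ell(e_1)-\ell(e_2)$. Since $e_1$ and $e_2$ lie on a common circuit they lie in the same circuit-connected component, so by Lemma~\ref{lem:61015} (with $r=1$) we get $\ell(e_1)=\ell(e_2)$ and the condition holds. Thus the conceptual content of the proposition is already contained in the existence of $\delta_{M,v}$, and the remainder is a computation.

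For the first condition I would read off the restrictions of $\Lcal_v = \Ocal_C(-\delta_{M,v})$ from the slopes of $-\delta_{M,v}$. On an edge $e$ incident to $v$ the outgoing slope of $-\delta_{M,v}$ from $v$ is $+1$, so the local degree of $\Lcal_v$ at the corresponding node of $C_v$ is $-1$, while all other slopes seen from $v$ vanish. Hence $\Lcal_v|_{C_v} \cong \Ocal_{C_v}(-C_v \cap C_v^c)$. For a component $C_w$ with $w \neq v$, the outgoing slope of $-\delta_{M,v}$ from $w$ along an edge joining $w$ to $v$ is $-1$ (the outgoing slopes at the two ends of an edge are opposite), giving local degree $+1$ at that node, whereas edges not meeting $v$ contribute $0$. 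Gluing these over all $w \neq v$ yields $\Lcal_v|_{C_v^c} \cong \Ocal_{C_v^c}(C_v \cap C_v^c)$, which is the second half of condition (1).

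For the second condition I would observe that $\bigotimes_{v} \Lcal_v = \Ocal_C\!\left(-\sum_{v \in V}\delta_{M,v}\right)$ and examine the slopes of $\sum_v \delta_{M,v}$. On an edge $e$ joining $v_1$ to $v_2$, the only functions $\delta_{M,u}$ with nonzero slope along $e$ are $u=v_1$ and $u=v_2$, contributing $-1$ and $+1$ respectively in the direction from $v_1$ to $v_2$; thus every edge of $\Gamma$ has total slope $0$. A piecewise linear function with vanishing slope on a connected graph is constant, so $\sum_v \delta_{M,v}$ equals a constant $c \in M^\gp$ pulled back from the base $\Spec k$. The associated line bundle $\Ocal_C(c) = \pi^*\Ocal_{\Spec k}(c)$ is therefore trivial, giving $\bigotimes_v \Lcal_v \cong \Ocal_C$ and establishing condition (2).

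The main obstacle is bookkeeping rather than ideas: I expect the delicate point to be the treatment of self-nodes (loops at a single vertex), where ``slope $-1$ going out from $v$'' must be interpreted at both ends of the loop and where the node does not occur in $C_v \cap C_v^c$. I would check that the two loop ends cancel in the degree on $C_v$ so that the formulas above remain correct, and more generally pin down the sign conventions of the piecewise-linear-to-line-bundle dictionary against the normalisation used in Definition~\ref{def:assocenriched}. Once these conventions are fixed, conditions (1) and (2) follow from the slope computations above, and the cited criterion then identifies $M^{as}$ as an enriched structure.
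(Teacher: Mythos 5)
Your proof is correct and follows essentially the same route as the paper: both arguments verify the two conditions of Main\`o's Proposition~2.14, getting condition (2) from the constancy of $\sum_{v} \delta_{M,v}$ and condition (1) from the restriction behaviour of $\Ocal_C(-\delta_{M,v})$ on the components $C_v$ and $C_v^c$. The only difference is that where the paper cites \cite[Proposition~2.4.1]{ranganathan2019} for the restriction formulas, you derive them directly from the outgoing-slope dictionary, and you additionally make explicit (via the cycle condition and $1$-richness) why $\delta_{M,v}$ exists at all --- a point the paper leaves implicit in Definition~\ref{def:assocenriched}; your only loose end, the loop-edge convention, is resolved by noting that the cycle condition forces slope $0$ on loops, consistent with self-nodes not appearing in $C_v \cap C_v^c$.
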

\begin{proof}
Because $\sum_v \delta_{M,v}$ is constant, we have $\prod \Lcal_v \cong \Ocal_x$. Then by Proposition~2.4.1 of \cite{ranganathan2019}, these $\Lcal_v$ satisfy \[\Lcal_v|_{C_v} \cong \Ocal_{C_v}(-C_v \cap C_v^c)\] and \[\Lcal_v|_{C_v^c} \cong \Ocal_{C_v^c}(C_v \cap C_v^c)\] and hence form an enriched structure by \cite[Definition~2.14]{maino1998}.
\end{proof}

We will now describe the map the other way. Note that given a regular smoothing $\Ccal/\Spec k[[t]]$, we can put a canonical log curve structure on it. We do this by giving $\Spec k[[t]]$ the log structure by the divisor $t = 0$ and $\Ccal$ the log structure by the divisor $C$. This is in fact a $1$-rich log structure, and restricts to a $1$-rich log structure on $C/\Spec k$. Then by Proposition~\ref{prop:mrlc} this corresponds to a unique minimal $1$-rich log structure on $C/\Spec k$.

\begin{definition}
Let $(\Lcal_v)_v$ be an enriched structure on $C/\Spec k$. Let $\Ccal/\Spec k[[t]]$ be a regular smoothing such that $\Lcal_v \cong O_{\Ccal}(C_v)|_C$. Let $(M_C,M_k)$ be the basic $1$-rich log structure on $C/\Spec k$ induced by this regular smoothing, as described above. This is called the basic $1$-rich log structure associated to $(\Lcal_v)_v$. 
\end{definition}
A priori this depends on the choice of the regular smoothing, but we will show that these two procedures are actually inverses, which means the choice does not matter. To show that, we have the following two propositions.
\begin{proposition}
Let $\Ccal/S$ with $S = \Spec k[[t]]$ be a regular smoothing of $C$. Let $N_C$ be the associated basic $1$-rich log structure on $C$. Then the associated enriched structure $(O_C(-\delta_{N_C,v}))_{v \in V}$ is isomorphic to the enriched structure $(O_{\Ccal}(C_v)|_C)_{v \in V}$.
\end{proposition}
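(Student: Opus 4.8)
The proposition claims that starting from a regular smoothing $\Ccal/S$ of $C$, producing the associated basic $1$-rich log structure $N_C$, and then forming the associated enriched structure $(O_C(-\delta_{N_C,v}))_v$, we recover the original enriched structure $(O_\Ccal(C_v)|_C)_v$. So this is one half of showing the two constructions are inverse.

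**The key idea:** I need to show $O_C(-\delta_{N_C,v}) \cong O_\Ccal(C_v)|_C$ for each $v$. The left side is defined via a PL function $\delta_{N_C,v}$ on the tropical curve $\Gamma$ with slope $-1$ out of $v$ and $0$ elsewhere. The right side is the restriction of the Cartier divisor given by the component $C_v$ inside the total space $\Ccal$. The connection between "$O_\Ccal(C_v)|_C$" and "a PL function on the tropical curve" is exactly the content of the tropicalization/ghost-sheaf dictionary for log curves.

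Let me think about how these two things relate.

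**The log structure from the regular smoothing.** $\Spec k[[t]]$ gets the log structure from $t=0$, so $\Mbar_S = \N$ generated by (the class of) $t$. The total space $\Ccal$ gets the divisorial log structure from the central fiber $C$. Near a node of $C$, the curve $\Ccal$ looks like $xy = t^{\ell}$ for some $\ell$, and the length of the corresponding edge $e$ is $\ell \cdot t \in \Mbar_S = \N$. For a regular smoothing, $\Ccal$ is regular, which forces $\ell = 1$ at each node — i.e., every edge has length $t$ (the generator of $\N$). This is why it's $1$-rich: all edge lengths in any cut are equal (they're all $t$).

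**The PL function picture.** On the tropical curve $\Gamma$ with all edges of length $t$, the PL function $\delta_{N_C,v}$ has slope $-1$ leaving $v$ and slope $0$ on all other edges. For an edge $e = (v, w)$ incident to $v$, this means $\delta$ goes from value $a$ at $v$ to value $a - \ell(e) = a - t$ at $w$... wait, but slope $0$ on the far side. Let me reconsider: $\delta_{M,v}$ has slopes $-1$ going out from $v$. So on edges incident to $v$, as we move away from $v$, the value decreases with slope $1$. On all edges not touching $v$... slope $0$. But that's inconsistent unless we're careful.

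Actually the standard interpretation: $\delta_{M,v}$ is a PL function that is $0$ at vertex $v$ and constant $= -\ell(e)$ on the components across each incident node, roughly. The data of a PL function on $\Gamma$ valued in $M^\gp$ is equivalent to a vertical line bundle, via the identification $\Mbar_C^\gp / \Mbar_S^\gp$ on vertices. Let me recall the dictionary precisely.

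**The tropical-to-divisor dictionary.** For a log curve $C/S$, there's a map from PL functions (sections of $\Mbar_C^\gp$ modulo pullbacks, or rather the tropicalization) to line bundles on $C$. The key result cited is Ranganathan's Proposition 2.4.1 (used in the previous proposition), and the whole framework says: a PL function $\beta$ on the tropical curve, assigning an integer slope to each edge and a value (in $M^\gp$) to each vertex, gives a line bundle $O_C(-\beta)$. On each component $C_v$, this line bundle is $O_{C_v}$ twisted by the nodes, where the twist at a node is by the slope of $\beta$ at that edge.

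**Now the comparison.** The divisor $C_v \subset \Ccal$ restricts to $C$ to give a specific line bundle. The order of vanishing / multiplicity of the function cutting out $C_v$ along each component of $C$ gives a PL-function-like datum. Specifically: the function $t$ on $\Spec k[[t]]$, pulled back, equals (locally) the product of the $C_w$'s — i.e., $\sum_w C_w = $ (div of $t$) $= C$ as a divisor. Near a node where $C_v$ meets $C_w$, in local coordinates $xy = t$, the component $C_v$ is $\{x = 0\}$ (say) and $C_w$ is $\{y = 0\}$. So $O_\Ccal(C_v)$ restricted to $C$ has, as a section, the local equation $x$, which vanishes to order $1$ on $C_v$ near that node. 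Translating: $C_v$ as a divisor assigns "slope" to each edge $e$ incident to $v$ — precisely slope $1$ entering $C_v$, which matches the PL function $-\delta_{N_C,v}$ having slope $+1$ approaching $v$ (equivalently $\delta$ has slope $-1$ leaving $v$).

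So the content really is: the tropicalization of the Cartier divisor $C_v|_C$ is exactly the PL function $\delta_{N_C,v}$.

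**My proof plan:**

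First I would recall the explicit form of the log structure associated to the regular smoothing $\Ccal/S$: the base has $\Mbar_S = \N \cdot t$, and because $\Ccal$ is regular, each node $e$ of $C$ has local equation $xy = t$, so $\ell(e) = t$ for all edges $e \in E$; this is what makes $N_C$ a $1$-rich structure.

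Second I would set up the tropicalization dictionary for vertical line bundles on a log curve, which is the statement (essentially Ranganathan Prop. 2.4.1, already invoked) that a PL function $\beta$ on $\Gamma$ valued in $M^\gp$, with prescribed slopes along edges, determines a line bundle $O_C(-\beta)$, and conversely the divisor-theoretic line bundles built from the components are recovered this way; concretely $O_C(\beta)|_{C_v}$ is $O_{C_v}$ twisted at each incident node by the slope of $\beta$ along that edge.

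Third — the main computation — I would identify the tropicalization of the restricted divisor $O_\Ccal(C_v)|_C$. Working in local coordinates $xy = t$ at a node $e$ joining $C_v$ to an adjacent component, the component $C_v$ is cut out by one of the coordinates, so $O_\Ccal(C_v)|_C$ near $e$ has slope $1$ on the edge $e$ pointing toward $v$ and slope $0$ away from $v$; at nodes not incident to $v$ it contributes slope $0$. This is precisely the slope data of $-\delta_{N_C,v}$, the PL function with slope $-1$ leaving $v$ and $0$ elsewhere. Hence the tropicalizations agree. I would then conclude that $O_\Ccal(C_v)|_C$ and $O_C(-\delta_{N_C,v})$ differ at most by a line bundle in the kernel of tropicalization, i.e.\ a line bundle pulled back along vertex-values; matching the vertex values (both are normalized so that the total $\sum_v$ is trivial, using that $\sum_v C_v = \operatorname{div}(t)$ is the pullback of the boundary) pins down the isomorphism.

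The step I expect to be the main obstacle is the third one: making the local-to-global comparison of the two line bundles precise, in particular ensuring the vertex-value (the $M^\gp$-valued, as opposed to slope, part of the PL data) matches and not merely the slopes. The slopes determine the line bundle only up to twisting by $\operatorname{Pic}$ of the dual graph's vertices, so I must use the global normalization $\prod_v O_\Ccal(C_v)|_C \cong O_C$ (from $\sum_v C_v = \operatorname{div} t$) together with the corresponding identity $\sum_v \delta_{N_C,v} = \text{const}$ to rule out an extraneous twist. Once both the slope data and this global triviality agree, the two enriched structures coincide term by term.
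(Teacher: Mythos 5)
There is a genuine gap in your final step. Your plan is: match the slopes of $O_{\Ccal}(C_v)|_C$ and of $\delta_{N_C,v}$ by a local computation at the nodes, and then eliminate the remaining ambiguity (a twist trivial on every component, coming from regluing along the dual graph) using the normalizations $\prod_v O_{\Ccal}(C_v)|_C \cong \Ocal_C$ and $\sum_v \delta_{N_C,v} = \mathrm{const}$. But ``correct restriction to each component'' plus ``product of the bundles is trivial'' is precisely Main\`o's characterization of an enriched structure, and enriched structures with these fixed invariants form a positive-dimensional family: by Example~\ref{ex:richalgclosed} and Corollary~\ref{cor:maino} they are parametrised by $\G_m^E/\G_m^T$ (already a full $\G_m$ for the $2$-gon). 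Concretely, if $\Lcal_v = \Lcal_v' \otimes T_v$ with each $T_v$ trivial on every component, product-triviality of both collections only forces $\prod_v T_v \cong \Ocal_C$, which does not make each $T_v$ trivial (take $T_1 = \lambda$, $T_2 = \lambda^{-1}$ on the $2$-gon). So your last step shows at best that both collections are enriched structures; it cannot conclude that they ``coincide term by term.''

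What is missing is any use of the torsor-level data. The bundle $O_C(-\delta_{N_C,v})$ is by definition the preimage of $\delta_{N_C,v}$ under $N_C^{\gp} \to \Nbar_C^{\gp}$, so it depends on the extension $0 \to \Ocal_C^\times \to N_C^\gp \to \Nbar_C^\gp \to 0$, i.e.\ on the actual log structure and not merely on the characteristic (slope and vertex-value) data; indeed all basic $1$-rich log structures on $C$ share the same characteristic data yet yield non-isomorphic enriched structures. The paper's proof supplies exactly this ingredient: it passes from $\delta_{N_C,v}$ to its image $\alpha \in \Mbar_C^\gp$, extends $\alpha$ to a piecewise linear function $\beta \in \Mbar_{\Ccal}^\gp$ on the total space equipped with its divisorial log structure $M_\Ccal$, and then computes the preimage torsor of $-\beta$ in $M_\Ccal^\gp$ concretely as the sheaf of functions with a simple zero along $C_v$ and invertible elsewhere, i.e.\ $O_\Ccal(C_v)$; restricting the identity $O_\Ccal(-\beta)|_C = O_C(-\delta_{N_C,v})$ to the central fibre then gives the desired isomorphism with the gluing data included. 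Your local slope computation is consistent with this, but without working in $M_\Ccal$ (or otherwise matching the gluing data at the nodes) the conclusion does not follow.
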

\begin{proof}
We need to show that for every $v \in V$, the line bundle $O_C(-\delta_{N_C,v})$ is isomorphic to $O_{\Ccal}(C_v)|_C$. We fix a $v \in V$. We introduce some other log structures: we let $M_\Ccal$ be the associated divisorial $1$-rich log structure on $\Ccal/S$ and we let $M_C$ be the restriction of this to $C$.

We recall the definition of $O_C(-\delta_{N_C,v})$: we start with the exact sequence
\[
0 \to O_C^\times \to N_C^\gp \to \Nbar_C^\gp \to 0.
\]
Then $\delta_{N,v}$ lives in $\Nbar_C^\gp$, hence the inverse image in $N_C^\gp$ is an $O_C^\times$-torsor, which corresponds to a line bundle on $O_C$ by adding the infinity section. This line bundle is then $O_C(-\delta_{N_C,v})$. Note that the image $\alpha$ of $\delta_{N,v}$ in $\Mbar_C^\gp$ induces the same line bundle as $\delta_{N,v}$. This section $\alpha$ is the restriction of a piecewise linear function $\beta \in \Mbar_\Ccal^\gp$. We have $O_\Ccal(-\beta)|_C = O_C(-\alpha) = O_C(-\delta_{M,v})$. Recall that $M_\Ccal$ is by definition the divisorial $1$-rich log structure associated to the divisor $C$. The corresponding log structure $M_S$ on $S$ is induced by the map $\N \to k[[t]]$ sending $1$ to $t$. Then $\beta$ is the piecewise linear function that is $1$ on $C_v$ and $0$ on $C_w$ with $w \neq v$, and trivial everywhere on $\Ccal \setminus C$ (where the characteristic monoid is trivial). Now we can compute $O_\Ccal(-\beta)$ by hand. The preimage of $-\beta$ in $M_\Ccal^\gp$ consists of functions in the function field invertible outside of $\Ccal$ that have a simple zero at $C_i$ and are units everywhere else, which after filling in the infinity section corresponds exactly to the line bundle $O_\Ccal(C_i)|_C$.
\end{proof}

\begin{proposition}
Let $M = (M_C,M_k),M' = (M_C',M_k')$ be two basic $1$-rich log structures on $C/\Spec k$ such that the enriched structures $(O_C(-\delta_{M,v}))_{v \in V}$ and $(O_C(-\delta_{M',v}))_{v \in V}$ are isomorphic. Then $M$ and $M'$ are isomorphic.
\end{proposition}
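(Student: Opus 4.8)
The plan is to use the explicit parametrisation of basic $1$-rich log structures from Example~\ref{ex:richalgclosed} and to show that $M \mapsto M^{as}$ is injective on isomorphism classes. Write $\Gamma = (V,E)$ for the dual graph, $T$ for the set of circuit-connected components and $c : E \to T$ for the quotient map. By Example~\ref{ex:richalgclosed} the two basic (i.e.\ minimal) $1$-rich log structures $M, M'$ are classified up to isomorphism by elements $\theta, \theta' \in (k^*)^E$, with $M \cong M'$ if and only if $\theta$ and $\theta'$ agree in $(k^*)^E / c^*((k^*)^T)$, equivalently if and only if the ratio $u := \theta/\theta' \in (k^*)^E$ is constant on each circuit-connected component. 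So it suffices to show that the hypothesis forces $u \in c^*((k^*)^T)$. An isomorphism of enriched structures is a termwise isomorphism of the tuples, so the hypothesis reads $\Lcal_v := O_C(-\delta_{M,v}) \cong O_C(-\delta_{M',v}) =: \Lcal_v'$ for every $v \in V$.

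First I would compare $\Lcal_v$ and $\Lcal_v'$. Since $M$ and $M'$ are both basic $1$-rich over the same curve, they have the same characteristic $\Mbar_k = \N^T$ and the same length function, so $\delta_{M,v}$ and $\delta_{M',v}$ coincide as sections of $\Mbar_C^\gp$; in particular they have the same slope $m_{e,v}$ along each edge $e$, namely $+1$ if $v$ is the head of $e$, $-1$ if $v$ is the tail, and $0$ otherwise (the signed incidence of $e$ at $v$). The log structures $M$ and $M'$ differ only by the unit twist $u$ concentrated at the nodes, so pulling back along the normalisation $\nu : \tilde C \to C$, which separates the two branches at each node, kills the twist and gives $\nu^* \Lcal_v \cong \nu^* \Lcal_v'$. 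Hence $\Lcal_v \otimes (\Lcal_v')^{-1}$ lies in $\ker\!\big(\nu^* : \operatorname{Pic}(C) \to \operatorname{Pic}(\tilde C)\big) \cong H^1(\Gamma, k^*) = (k^*)^E / c^*$-type coboundaries, and a local computation at each node (in the style of the proof of the previous proposition) identifies its class with $(u_e^{m_{e,v}})_{e \in E}$.

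The hypothesis then says that $(u_e^{m_{e,v}})_e$ is trivial in $H^1(\Gamma, k^*)$ for every $v \in V$. Pairing with a cycle $\gamma = (n_e) \in \ker(\partial) = H_1(\Gamma;\Z)$ (a well-defined map out of $H^1(\Gamma,k^*)$) gives $\prod_e u_e^{m_{e,v} n_e} = 1$ for all $v$; combining these over $v$ with arbitrary integer weights $f : V \to \Z$ and using $\sum_v f(v)\,m_{e,v} = f(\mathrm{head}\,e) - f(\mathrm{tail}\,e)$ yields
\[
\prod_{e} u_e^{\,n_e\,(f(\mathrm{head}\,e) - f(\mathrm{tail}\,e))} = 1 \qquad \text{for all } f : V \to \Z \text{ and all cycles } \gamma.
\]
Now specialise $\gamma$ to a circuit (a simple cycle) through edges $e_1, \dots, e_k$ and vertices $v_0, \dots, v_{k-1}, v_k = v_0$, and take $f = \mathbb{1}_{v_j}$: since $\epsilon_i\big(f(\mathrm{head}\,e_i)-f(\mathrm{tail}\,e_i)\big) = f(v_i)-f(v_{i-1})$ for the circuit orientation, the product collapses to $u_{e_j} u_{e_{j+1}}^{-1} = 1$, so $u_{e_j} = u_{e_{j+1}}$. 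Letting $j$ vary shows $u$ is constant along every circuit, hence constant on every circuit-connected component (Definition~\ref{def:graph}), i.e.\ $u \in c^*((k^*)^T)$. Therefore $\theta \equiv \theta' \pmod{c^*((k^*)^T)}$ and $M \cong M'$.

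I expect the main obstacle to be the node-local computation in the second paragraph: making precise that the sole difference between $\Lcal_v$ and $\Lcal_v'$ is the gluing factor at each node, and that this factor is exactly $u_e$ raised to the slope $m_{e,v}$ of $\delta_{\cdot,v}$ along $e$. Once the class in $H^1(\Gamma,k^*)$ is pinned down, the remaining combinatorics is elementary. One should also keep track of the bookkeeping conventions (a fixed orientation of the edges, and the wrap-around case $v_k = v_0$), but these do not affect the conclusion.
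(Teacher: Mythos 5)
Your argument is correct, but it takes a genuinely different route from the paper's. The paper never touches the explicit coordinates of Example~\ref{ex:richalgclosed}: it invokes Borne--Vistoli to replace each log structure by its Deligne--Faltings structure $\Mbar_C \to \Div_{C_{\et}}$, reduces to matching the pairs (line bundle, section) attached to piecewise linear functions, notes that by basicness every such function is a combination of constants and the $\delta_{M,v}$, and then uses that the sections are $1$ on $C_v$ and $0$ on $C_v^c$ under the identifications of \cite[2.4.1]{ranganathan2019} to upgrade the hypothesized isomorphisms $\Lcal_v \cong \Lcal_v'$ to isomorphisms of pairs, hence of log structures. You instead use the torus parametrization of basic structures by $(k^*)^E/c^*((k^*)^T)$, locate the discrepancy $\Lcal_v \otimes (\Lcal_v')^{-1}$ in $\ker(\nu^*) \cong H^1(\Gamma,k^*)$, and kill it by pairing with cycles; your bookkeeping is right, including the exponent $m_{e,v}$ (on the $2$-gon with both edges oriented out of $v$ the gluing discrepancy is $(u_{e_1}^{-1},u_{e_2}^{-1})$, as your formula predicts, and edges not meeting $v$ contribute trivially since $\delta_{\cdot,v}$ is there a constant with canonical lift), and the collapse to $u_{e_j}=u_{e_{j+1}}$ along circuits correctly yields constancy on circuit-connected components, which is exactly $u \in c^*((k^*)^T)$. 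What each approach buys: yours is elementary and makes completely transparent how the hypothesis is used (it trivializes one graph-cohomology class per vertex), while the paper's avoids choosing trivializations, orientations and normalization gluing data, and is independent of Example~\ref{ex:richalgclosed}, which the paper states without detailed proof. Two caveats on your write-up, neither fatal: the node-local computation you flag as the main obstacle (that the sole difference between $\Lcal_v$ and $\Lcal_v'$ is the factor $u_e^{m_{e,v}}$ in the gluing at the node of $e$) is asserted rather than carried out, and it is the real content --- it needs the local model $\sigma_x\sigma_y = (\ell(e),\theta_e)$ in $M_C$ written down, as the paper does implicitly via \cite{ranganathan2019}; and your reduction leans on the classification in Example~\ref{ex:richalgclosed}, so a fully self-contained proof would have to include its verification.
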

\begin{proof}
Let $T$ denote the set of circuit-connected components of $\Gamma$. Note that on the level of characteristic monoids, any two basic $1$-rich log structures are canonically isomorphic. Namely, $M$ and $M'$ both satisfy $\Mbar_k = \Mbar_k' = \N^T$, with the length of edge $e$ being the circuit-connected component it is contained in. By \cite[1.8]{kato2000} this uniquely determines $\Mbar_C$ and $\Mbar_C'$. Note that $M_k$ and $M_k'$ are also isomorphic, since they both split as $k^* \oplus \N^T$.

By the proof of \cite[Theorem~3.6]{borne2012}, we can associate Deligne-Faltings log structures to a log structure $M_X$ on a scheme $X$. This Deligne-Faltings log structure is a symmetric monoidal map $\Mbar_{X} \to \Div_{X_{\et}}$ where $\Div_{X_{\et}}$ parametrises line bundles with sections on the small \'etale site of $X$.

By \cite[Theorem~3.6]{borne2012}, the two log structures $M_C$ and $M'_C$ are isomorphic if and only if their associated Deligne-Faltings log structures are isomorphic.

For this, we have to show that for two piecewise linear functions $\alpha_M \in \Mbar_C$ and $\alpha_{M'} \in \Mbar_C'$ with $\alpha = \alpha'$ under the isomorphism $\Mbar_C \cong \Mbar_C'$ we have that the induced line bundle $O_C(\alpha_M), O_C(\alpha_{M'})$ together with their respective sections $s = 1,s' = 1$ are isomorphic.

As the two log structures are basic rich, any piecewise linear function is a combination of a constant piecewise linear function and of the $\delta_{M_v}$ for different vertices $v$. Fix a vertex $v$. We already know that the line bundles $O_C(-\delta_{M,v}),O_C(-\delta_{M',v})$ are isomorphic by assumption. It remains to check that the isomorphisms can be chosen to respect the sections $s, s'$.

By the description of $O_C(-\delta_{M,v})$ given in \cite[2.4.1]{ranganathan2019} we can identify $O_C(-\delta_{M,v})|_{C_v}$ with $O_{C_v}(C_v \cap C_v^c)$ and $O_C(-\delta_{M,v})|_{C_{v'}}$ with $O_{C_{v'}}(- C_v \cap C_{v'})$ for any other vertex $v'$. With the identifications made, the sections $s$ and $s'$ are $1$ on $C_v$ and $0$ on $C_v^c$. This means that the line bundles with sections $(O_C(-\delta_{M,v}),s)$ and $(O_C(-\delta_{M',v}),s')$ are canonically isomorphic. Then the Deligne--Faltings structures associated to $M$ and $M'$ are isomorphic, so as stated previously we get that $M$ and $M'$ are isomorphic.
\end{proof}

\begin{corollary}
\label{cor:maino}
Let $C/\Spec k$ be a prestable curve over an algebraically closed field, and let $k \to \Min(\LC)$ be the corresponding map. Then the set $\Min(RLC)_{\Spec k}(k)$ of $k$-points of the fiber is in natural bijection with the set of enriched structures on $C/\Spec k$.
\end{corollary}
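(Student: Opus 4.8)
The plan is to assemble the maps constructed in the preceding three propositions into an explicit bijection between the two sets. Recall that the $k$-points of the fiber $\Min(\RLC)_{\Spec k}(k)$ are, by the example following Definition~\ref{def:min} and by Proposition~\ref{def:mrlc}, exactly the isomorphism classes of minimal (equivalently, basic) $1$-rich log structures on the fixed curve $C/\Spec k$. So the statement to prove is that the assignment $M \mapsto M^{as}$ of Definition~\ref{def:assocenriched}, sending a basic $1$-rich log structure to the enriched structure $(O_C(-\delta_{M,v}))_{v \in V}$, descends to a bijection on isomorphism classes. The three propositions just proved give exactly the ingredients: the first shows $M^{as}$ is always an enriched structure (so the map is well defined), and the last two establish that the constructions in both directions are mutually inverse.

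First I would set up the two maps on isomorphism classes. In one direction, send the isomorphism class of a basic $1$-rich log structure $M$ to the isomorphism class of the enriched structure $M^{as}$; this is well defined and lands in enriched structures by the proposition following Definition~\ref{def:assocenriched}. In the other direction, given an enriched structure $(\Lcal_v)_v$, choose a regular smoothing $\Ccal/\Spec k[[t]]$ realising it (which exists by Definition~\ref{def:richoverk}), equip it with its divisorial log structure, and restrict to obtain a $1$-rich log structure on $C/\Spec k$, which by Proposition~\ref{prop:mrlc} corresponds to a unique minimal (basic) $1$-rich log structure. I would emphasise that this construction is a priori only defined after a choice of smoothing, so at this stage it is merely a map from enriched structures to isomorphism classes of basic $1$-rich log structures.

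Next I would verify the two round trips. Starting from an enriched structure $(\Lcal_v)_v$ and passing to the associated basic $1$-rich log structure $N_C$ and back, the penultimate proposition shows $(O_C(-\delta_{N_C,v}))_v \cong (O_\Ccal(C_v)|_C)_v = (\Lcal_v)_v$, so the composite is the identity on enriched structures. Starting from a basic $1$-rich log structure $M$, forming $M^{as}$, and then forming the basic $1$-rich log structure associated to it, the final proposition shows the result is isomorphic to $M$, since two basic $1$-rich log structures with isomorphic associated enriched structures are isomorphic. Together these two facts show the two maps are mutually inverse bijections; in particular the apparent dependence on the choice of regular smoothing disappears, since the resulting isomorphism class is pinned down as the inverse image of a fixed enriched structure.

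The main obstacle, already isolated and handled in the final proposition, is the bookkeeping of sections: matching line bundles alone is not enough to conclude that the two Deligne--Faltings log structures agree, one must check the tautological sections $s=1$ and $s'=1$ correspond under the identification $\Mbar_C \cong \Mbar_C'$. For the corollary itself, the only remaining care is to confirm that I am genuinely working with isomorphism classes throughout and that the identification of $\Min(\RLC)_{\Spec k}(k)$ with basic $1$-rich log structures (via Proposition~\ref{def:richminimal} and the description of minimal objects) is invoked correctly, so that the bijection of the previous paragraph is precisely the asserted bijection.
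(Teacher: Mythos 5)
Your proposal is correct and matches the paper's intended argument: the paper states Corollary~\ref{cor:maino} without a separate proof precisely because it follows by assembling the three preceding propositions exactly as you do — well-definedness of $M \mapsto M^{as}$, surjectivity via the regular-smoothing construction together with the round-trip proposition, and injectivity from the proposition that basic $1$-rich log structures with isomorphic associated enriched structures are isomorphic. Your handling of the a priori choice-dependence of the smoothing (it disappears once the two maps are shown to be mutually inverse) is the same point the paper makes in the sentence introducing those two propositions.
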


\section{A compactification}
\label{sec:compact}
For the remainder of this section, we fix $r \in \N_{\geq 1}$. In particular $r$ is not $\infty$. In this section, we will give a compactification $\ARLC_r$ of $\RLC_r$, in Definition~\ref{def:arlc}. This will be a stack, which will be called $\ARLC_r$ for ``weakly $r$-rich log curves'', such that the corresponding map between the algebraic stacks with log structures $\Min(\RLC_r) \to \Min(\ARLC_r)$ is an open dense immersion with $\Min(\ARLC_r)/\Min(\LC)$ proper. We will prove the following part of the second main theorem.

\begin{theorem}
\label{thm:comp}
For $r \in \N_{\geq 1}$. The stack $\RLC_r$ has a canonical modular compactification $\ARLC_r$, a log blowup and a sub-CFG of $\LC$, which is itself represented by an algebraic stack $\Min(\ARLC_r)$. The map $\Min(\RLC_r) \to \Min(\ARLC_r)$ is an open, dense inclusion.
\end{theorem}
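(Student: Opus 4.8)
The plan is to realise $\ARLC_r$ as the log blowup of $\LC$ attached to an explicit subdivision of the tropicalisation of $\Mf = \Min(\LC)$, and then to identify $\Min(\RLC_r)$ with the open ``interior'' locus of that blowup. First I would construct, for every graph $\Gamma$, the weakly $r$-rich subdivision $\Sigma_\Gamma$ of $\Mbar_\Gamma = \R_{\geq 0}^{E(\Gamma)}$, generalising the $r = 1$ construction of Section~\ref{sec:trop}. The piecewise linear map $g$ of the proof of Theorem~\ref{thm:trop:subdivision}, which records $(\min_{e \in c} x_e)_{c \in \Cuts(\Gamma)}$, is replaced by a map that also remembers the divisor pattern $\lambda_e \mid r$ realising $r$-closeness on each cut; the locus where this map fails to be linear is a polyhedral fan, and $\Sigma_\Gamma$ is taken to be its chambers together with all their faces. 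By the same argument as in Theorem~\ref{thm:trop:subdivision}, $\Sigma_\Gamma$ is a genuine, complete subdivision whose maximal cones are exactly the loci on which a fixed $r$-closeness pattern holds, and a family $\sigma \to \Mbar_\Gamma$ is weakly $r$-rich precisely when it factors through $\Sigma_\Gamma$.

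Next I would check that $(\Sigma_\Gamma)_\Gamma$ is invariant under $\Aut(\Gamma)$ and compatible with the contraction maps $\Gamma \rightsquigarrow \Gamma'$; the latter is exactly where I would use that cuts, unlike circuit-connected components, behave well under edge contraction (Lemma~\ref{lem:graphcuts}), so that the subdivisions glue. This makes $(\Sigma_\Gamma)_\Gamma$ a subdivision of the cone stack of $\Mf$, and the standard dictionary between subdivisions and log modifications then produces a saturated log blowup $b \colon \Min(\ARLC_r) \to \Mf$. Since the subdivision is complete, $b$ is proper; since it is a log blowup it is log \'etale, and $\Min(\ARLC_r)$ is again an algebraic stack with log structure. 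Its universal curve is weakly $r$-rich and base change of a weakly $r$-rich curve is weakly $r$-rich, so weakly $r$-rich log curves form a sub-CFG $\ARLC_r \subset \LC$; the factoring characterisation of the previous paragraph then shows that $\Min(\ARLC_r)$ represents $\ARLC_r$, so that $b$ is the $\Min$ of the forgetful map, establishing the ``canonical modular compactification'', ``log blowup'' and ``algebraic'' assertions at once.

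It then remains to embed $\Min(\RLC_r)$ as an open dense substack. For openness I would let $U \subseteq \Min(\ARLC_r)$ be the locus over which the universal weakly $r$-rich curve is $r$-rich; this is open by Proposition~\ref{prop:richopen}. I would then show $U \cong \Min(\RLC_r)$: over $U$ the universal curve is $r$-rich, and on the rich locus the subdivision chart coincides with the root map $\N^E \to \N^T$ of Proposition~\ref{def:richminimal}, so a minimal weakly $r$-rich curve that happens to be $r$-rich is already minimal $r$-rich, and conversely a minimal $r$-rich curve is minimal weakly $r$-rich; hence the two minimal stacks agree over $U$. Density is then immediate: a log blowup is an isomorphism over the locus of trivial log structure, the smooth curves are such a locus and are dense in $\Min(\ARLC_r)$, and they lie inside the open substack $\Min(\RLC_r)$, whose closure therefore contains all of $\Min(\ARLC_r)$.

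The step I expect to be the main obstacle is the identification $U \cong \Min(\RLC_r)$. Because $\RLC_r \to \LC$ is only log open and not an open immersion, minimal $r$-rich log structures are a priori not minimal log structures, so one genuinely has to compare two different recipes for the minimal monoid --- the root map on the rich side and the subdivision cone on the weakly rich side --- and prove they produce the same log structure precisely on the rich locus. The secondary difficulty is the combinatorial one of extending the completeness-of-subdivision argument from $r = 1$ to arbitrary finite $r$, where the divisibility constraints $\lambda_e \mid r$ make both the construction of $\Sigma_\Gamma$ and the verification that its chambers are the cones of a fan more delicate than in the $r = 1$ case already handled in Theorem~\ref{thm:trop:subdivision}.
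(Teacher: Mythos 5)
The first half of your proposal --- the construction of the compactification --- is sound and is essentially the tropical dual of what the paper does. Where you build, for each graph $\Gamma$, the weakly $r$-rich subdivision $\Sigma_\Gamma$ and invoke the dictionary between subdivisions of the tropicalisation and log modifications of $\Mf$, the paper works directly on nuclear charts $\Uf \to \LC$ and blows up the explicit monomial ideal $I_\Gamma = \prod_{c \in \Cuts(\Gamma)} \prod_{\lambda_1,\dots,\lambda_n \mid r} (\ell(e_1)^{\lambda_1},\dots,\ell(e_n)^{\lambda_n})$ of Definition~\ref{def:igamma}, checking the universal property of the log blowup against weak $r$-richness at strict geometric points (Proposition~\ref{def:igammaworks}) and compatibility under refinement via Lemma~\ref{lem:graphcuts} (Proposition~\ref{prop:logidealscomp}); the paper itself observes that the tropical statement (Theorem~\ref{thm:trop:subdivision}) follows from Corollary~\ref{cor:blowup}, so the two routes are interchangeable. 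One thing you must add in your route: a subdivision in general only yields a log \emph{modification}, not a log \emph{blowup}, and the theorem explicitly asserts the latter. This is fillable, but you need to say that $\Sigma_\Gamma$ is coherent: it is by construction the domains-of-linearity fan of the piecewise linear function $\sum_{c,(\lambda_e)} \min_{e \in c} \lambda_e x_e$, i.e.\ exactly the tropicalisation of the ideal $I_\Gamma$, so the associated modification is the blowup in that ideal.

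The genuine gap is the step you yourself flag as the expected obstacle: the identification $U \cong \Min(\RLC_r)$. The assertion that ``on the rich locus the subdivision chart coincides with the root map'' is precisely what has to be proved, and nothing in your outline proves it; it is also exactly where the paper's proof does its real work. In cone language, one must show that the cone dual to $\N^T$, embedded in $\R_{\geq 0}^{E}$ by the dual of the root map $\N^E \to \N^T$, is literally a cone of $\Sigma_\Gamma$ and not merely contained in one: containment is easy (every function $\min_{e \in c}\lambda_e x_e$ is linear on that cone, since each cut lies in a single circuit-connected component), but equality is the content, and without it a minimal $r$-rich log curve could fail to be a \emph{minimal} weakly $r$-rich log curve, breaking the openness claim. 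The paper settles this by a monoid computation: reduce to a geometric point with one circuit-connected component, so the minimal $r$-rich characteristic monoid is $\Mbar_r = \N$ with edge lengths $g(e) \mid r$; let $\bar f \colon \Mbar_a \to \Mbar_r$ be the comparison map from the minimal weakly $r$-rich characteristic monoid and $h$ the corresponding length map; for each cut $c$ the set $A = \{(r/g(e))\,h(e) : e \in c\} \subset \Mbar_a$ has a smallest element $x$ by weak $r$-richness, $\bar f$ is constant with value $r$ on $A$, hence $\bar f(x'-x)=0$ for every $x' \in A$, and sharpness of $\bar f$ forces $x'=x$; this collapses $A$ to a single element and yields $\Mbar_a \cong \Mbar_r$. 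Until you supply this argument (or an equivalent cone-level one), the open-immersion half of the theorem is unproven; your density argument, by contrast, matches the paper's and is fine.
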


For this, we will give a definition of weakly $r$-richness. First we need an adequate generalisation of $r$-closeness, which we give in the following lemma.

\begin{definition}
Let $M$ be a monoid and $A = \{a_1,\dots,a_n\}$ a subset of $M$. Then for $r \in \N$, the set $A$ is called \emph{weakly $r$-close} if for any sequence $(\lambda_i)_{i=1}^n$ of divisors of $r$ the ideal generated by all $\lambda_i a_i$ is principal, or equivalently there is a smallest element amongst all $\lambda_i a_i$.
\end{definition}

\begin{remark}
\label{rem:contravariancer}
Note that a set that is weakly $r'$-close is also weakly $r$-close for $r \mid r'$. A warning: this is similar to the case for $r$-close sets, except there the implication goes the other way. A set that is $r$-close is $r'$-close when $r \mid r'$.
\end{remark}

\begin{definition}
Let $C/\Spec k$ be a log curve over an algebraically closed field and let $\Gamma$ be the corresponding tropical curve. Then the edges of $\Gamma$ have a length in $\Mbar_k$, given by a function $\ell: E \to \Mbar_k$. Now let $c$ be a cut of $\Gamma$. Then $C/\Spec k$ is called \emph{weakly $r$-rich} if for any cut in $\Gamma$, the set of edge lengths of the edges in $c$ is weakly $r$-close. A log curve $C/S$ is called \emph{weakly $r$-rich} if it is weakly $r$-rich over all strict geometric points.
\end{definition}

\begin{example}
Any $r$-rich log curve is weakly $r$-rich. In fact, any $\infty$-rich log curve is weakly $r$-rich.
\end{example}
\begin{example}
For $r = 1$, this means that for every cut there is an edge with the smallest length among all edges in that cut. In other words, the ideal generated by the edge lengths of the edges in a cut is principal.
\end{example}
\begin{example}
We consider an weakly $1$-rich log structure on the $2$-gon over $\Spec k$. Note that there is only one cut of the $2$-gon, cutting both edges at once. The weakly richness then means that the labels of the two edges should be comparable. That means the map $f: \N^2 \to \Mbar_k$ should factor through the map $i_1: \N^2 \to \N^2, (a,b) \mapsto (a+b,b)$ or $i_2: \N^2 \to \N^2, (a,b) \mapsto (a,a+b)$. It can also factor through both, if and only if $f(1,0) = f(0,1)$. Note that even if it factors through $i_1$, the resulting map $\N^2 \oplus k^* \to k$ is not necessarily a log structure: if the labels of the two edges are the same in $\Mbar_k$, then it is only a pre-log structure. We see that every log structure factors uniquely through either one of the two log structures $\N^2 \oplus k^*$ where one of the edges has as label a unit vector and the other has $(1,1)$, or through the log structure $\N \oplus k^*$ where they both have label $1$. In the next example we will see that we get a bijection between minimal weakly $1$-rich log curves and $\P^1$, with the bijection sending a log structure to the ``ratio'' of their labels. Note that $\P^1$ is also the log blowup of the log point with log structure $\N^2$ in the ideal generated by the two unit vectors.
\end{example}
\begin{example}
We consider the universal deformation $C_U/U$ of the $2$-gon over $k[[u,v]]$ with its minimal log structure. This is not weakly rich, and in fact there are no weakly rich log structures on $C_U/U$, as the labels of the edges over the closed point are already incomparable in the structure sheaf, let alone in the log structure sheaf. But we can characterise when $C/S$, the pullback along a morphism $S \to U$, is weakly rich. Let $I$ be the monoid ideal generated by the labels of the two edges. By definition, $C/S$ is weakly rich if and only if the pullback of $I$ has a smallest element (under the $\leq$ relation). This is exactly the universal property of the log blowup of $U$ in $I$, so weakly rich curves coming from a pullback to $U$ are represented by a log blowup of $U$. The underlying scheme of this log blowup is $k[[u,v]]$ blown up in the origin.
\end{example}

\begin{remark}
For weakly rich curves, there is no equivalent reformulation of weakly richness using the terminology of circuit-connected components. The problem is that circuit-connected components do not behave well under contraction maps, and one circuit-connected component may split into several. Indeed, using that insight one can prove immediately that the CFG of log curves where on strict geometric points every circuit-connected component has a smallest edge is not compact. Cuts behave better under contraction maps, as we will see later in Lemma~\ref{lem:graphcuts}.
\end{remark}

\begin{definition}
\label{def:arlc}
Define $\ARLC_r/\LogSch$ to be the sub-CFG of $\LC/\LogSch$ consisting of \textbf{W}eakly $r$-\textbf{R}ich \textbf{L}og \textbf{C}urves.
\end{definition}

We have maps between these spaces, as described in the following two lemmas.
\begin{lemma}
\label{lem:rlctoarlc}
Let $r \in \N_{\geq 1}$. Any $\infty$-rich log curve is weakly $r$-rich. There are natural monomorphisms
\begin{equation}
\label{eq:rlcrarlcr}
\RLC_r \subset \ARLC_r \subset \LC
\end{equation}
and more generally
\begin{equation}
\label{eq:rlcr'arlcr}
\RLC_{r'} \subset \ARLC_r \subset \LC
\end{equation}
for any $r' \in \N_{\geq 1} \cup \{\infty\}$.
\end{lemma}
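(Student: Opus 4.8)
The plan is to reduce both displayed chains of inclusions to a single statement at the level of monoids, namely that an $\infty$-close set is weakly $r$-close for every $r \in \N_{\geq 1}$. Granting this, the rightmost inclusions $\ARLC_r \subset \LC$ are immediate from Definition~\ref{def:arlc}, where $\ARLC_r$ is introduced as a sub-CFG of $\LC$, and every map in question is an inclusion of a full sub-CFG, hence fully faithful and in particular a monomorphism. So the only thing to check is that $r'$-richness (for any $r' \in \N_{\geq 1} \cup \{\infty\}$) implies weak $r$-richness; the first sentence of the lemma is the special case $r' = \infty$.

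I would first record the monoid lemma. Let $A = \{a_1,\dots,a_n\} \subset \Mbar_k$ be $\infty$-close, so there are $a \in \Mbar_k$ and positive integers $\mu_i$ with $a_i = \mu_i a$. Fix any sequence $(\lambda_i)$ of divisors of $r$. Then $\lambda_i a_i = (\lambda_i \mu_i)\, a$, so every element of $\{\lambda_i a_i\}$ is a non-negative multiple of the single element $a$. Since $\Mbar_k$ is a sharp integral monoid, the multiples $\{ m a : m \in \N\}$ form a chain for the divisibility order, with $m a \leq m' a$ if and only if $m \leq m'$; consequently the finite set $\{\lambda_i a_i\}$ has a smallest element, namely $\lambda_j a_j$ for an index $j$ minimising $\lambda_i \mu_i$. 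As the chosen divisors were arbitrary, $A$ is weakly $r$-close.

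To assemble the inclusions, recall from (the proof of) Lemma~\ref{lem:covariancer} that any $r'$-close set is $\infty$-close. Hence, for a log curve that is $r'$-rich, at every strict geometric point and for every cut $c$ of the associated tropical curve $\Gamma$, the set of edge lengths in $c$ is $\infty$-close, and therefore weakly $r$-close by the monoid lemma. Since both $r'$-richness and weak $r$-richness are conditions imposed cut-by-cut over all strict geometric points, this shows every $r'$-rich log curve is weakly $r$-rich. Taking $r' = r$ gives \eqref{eq:rlcrarlcr}, taking general $r'$ gives \eqref{eq:rlcr'arlcr}, and $r' = \infty$ gives the opening sentence.

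The argument is almost entirely formal, and there is no serious obstacle. The one point requiring care is the direction of the divisibility relation, which is opposite to the one governing weak $r$-closeness itself (cf. Remark~\ref{rem:contravariancer}): here we pass through $\infty$-closeness, which is implied by $r'$-closeness for every $r'$ and implies weak $r$-closeness for every $r$, so no compatibility between $r$ and $r'$ is needed. The only genuinely mathematical input is the chain property of $\{ma\}$, which relies on $\Mbar_k$ being sharp and integral.
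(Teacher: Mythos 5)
Your proof is correct and takes essentially the same route as the paper, which states this lemma without proof as an immediate consequence of the preceding example (``any $\infty$-rich log curve is weakly $r$-rich''): the content of that example is precisely your monoid-level lemma that $r'$-close implies $\infty$-close implies weakly $r$-close, applied cut-by-cut at strict geometric points, with the monomorphism statement coming from $\RLC_{r'}$ and $\ARLC_r$ being full sub-CFGs of $\LC$. One cosmetic remark: the ``only if'' half of your chain property ($ma \leq m'a$ iff $m \leq m'$) can fail when $a = 0$, but you only use the ``if'' half, which holds in any commutative monoid, so nothing is affected.
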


\begin{remark}
By Proposition~\ref{prop:richopen}, the first monomorphism in \eqref{eq:rlcr'arlcr} is always log open. We will later see that the corresponding map on algebraic stacks with a log structure is not an open immersion. The problem here is that minimal $r'$-rich log curves will generally not be \emph{minimal} weakly $r$-rich log curves unless $r = r'$. The fact that minimal $r$-rich log curves are minimal weakly $r$-rich log curves is the main content of Theorem~6.1 (together with the fact that $\ARLC_r$ is a log blowup).
\end{remark}

\begin{lemma}
\label{lem:contravariancer}
Let $r,r' \in \N_{\geq 1}$ with $r \mid r'$. Any weakly $r'$-rich log curve $C/S$ is weakly $r$-rich. There is a monomorphism
\begin{equation}
\ARLC_{r'} \subset \ARLC_{r} \subset \LC.
\end{equation}
\end{lemma}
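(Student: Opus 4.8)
The plan is to reduce everything to the monoid-level assertion recorded in Remark~\ref{rem:contravariancer}, mirroring the proof of Lemma~\ref{lem:covariancer}. First I would verify that for $r \mid r'$ every weakly $r'$-close subset $A = \{a_1,\dots,a_n\}$ of an fs monoid $M$ is weakly $r$-close. This is immediate from the definition: since $r \mid r'$, every divisor of $r$ is a divisor of $r'$, so the collection of sequences $(\lambda_i)_i$ with each $\lambda_i \mid r$ is contained in the collection of those with each $\lambda_i \mid r'$. Weak $r'$-closeness asserts principality of the ideal generated by $\lambda_1 a_1,\dots,\lambda_n a_n$ for every sequence in the larger collection; in particular it holds for every sequence in the smaller one, which is exactly weak $r$-closeness.

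Next I would propagate this to curves over a geometric point. If $C/\Spec k$ is weakly $r'$-rich, then for every cut $c$ of the tropical curve $\Gamma$ the set of edge lengths $\{\ell(e) : e \in c\} \subset \Mbar_k$ is weakly $r'$-close, hence weakly $r$-close by the previous step; thus $C/\Spec k$ is weakly $r$-rich. Since weak richness is by definition checked over all strict geometric points, a weakly $r'$-rich log curve $C/S$ is weakly $r$-rich over all such points and therefore weakly $r$-rich.

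Finally, the monomorphism $\ARLC_{r'} \subset \ARLC_r \subset \LC$ would follow formally. Both $\ARLC_r$ and $\ARLC_{r'}$ are defined as sub-CFGs of $\LC$ (Definition~\ref{def:arlc}), and the previous paragraph shows that every object of $\ARLC_{r'}$ is an object of $\ARLC_r$; the inclusion on objects then yields the claimed inclusion of sub-CFGs, with the inclusion of $\ARLC_r$ into $\LC$ being part of its definition.

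The only place where genuine care is required is the \emph{direction} of the divisibility implication. As flagged in Remark~\ref{rem:contravariancer}, for weak closeness the implication runs opposite to the case of plain $r$-closeness in Lemma~\ref{lem:covariancer}: there $r$-closeness propagates upward along $r \mid r'$, whereas here weak $r'$-closeness propagates downward. Getting the quantifier inclusion the right way round — using that the divisors of $r$ embed into the divisors of $r'$, and not conversely — is the whole (admittedly slim) content of the lemma, and is the one step I would be careful to state explicitly rather than gloss over.
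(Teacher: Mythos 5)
Your proof is correct and follows essentially the same route as the paper, whose entire proof is the one-line reduction to Remark~\ref{rem:contravariancer}; your argument simply spells out why that remark holds (the divisors of $r$ form a subset of the divisors of $r'$, so the universal quantifier in weak $r'$-closeness subsumes the one in weak $r$-closeness). Your explicit attention to the direction of the implication — the point the paper itself flags as the pitfall — is exactly the right emphasis, and nothing is missing.
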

\begin{proof}
This follows immediately from Remark~\ref{rem:contravariancer}.
\end{proof}

\begin{remark}
By Lemma~\ref{lem:covariancer}, Lemma~\ref{lem:rlctoarlc} and Lemma~\ref{rem:contravariancer} we get the following diagram of maps for a pair of positive integers $r \mid r'$.
\[
\begin{tikzcd}
\RLC_r \arrow[r]\arrow[d] & \RLC_{r'}\arrow[d] \\
\ARLC_r & \ARLC_{r'}\arrow[l]
\end{tikzcd}
\]
where all the maps are inclusions of a full subcategory. Again, we point out that the two horizontal maps go in opposite directions.
\end{remark}

We will prove that $\ARLC_r$ is a log blowup of $\LC$ using nuclear charts, as defined in \cite{poiret2020}. We start with a few general lemmas on blowups, and then, given a nuclear chart $\Uf$ of $\LC$, write down an explicit ideal of the characteristic monoid such that $\Uf \times_{\LC} \ARLC_r$ is exactly this blowup of $\Uf$ in this ideal.

\begin{definition}
A \textit{nuclear chart} is a nuclear curve of $\MLC$ (see Definition 3.39 of \cite{poiret2020}). This automatically comes with a strict \'etale map to $\LC$.
\end{definition}
In fact, all nuclear charts of $\LC$ together form a strict \'etale cover. So to show that the map $\ARLC_r \to \LC$ is a log blowup, we need to show that this is the case for nuclear charts, in a way compatible with refinements of nuclear charts. The following propositions show that exactly this is the case. We start with two general lemmas on respectively cuts and blowups.
\begin{lemma}
\label{lem:graphcuts}
Let $\Gamma = V \sqcup E$ and let $\Gamma \to \Gamma'$ be a contraction in a set of edges $S \subset E$. Then we have an inclusion $\Cuts(\Gamma') \subset \Cuts(\Gamma)$, with
\[
\Cuts(\Gamma') = \{c \in \Cuts(\Gamma) \mid c \cap S = \varnothing\}.
\]
\end{lemma}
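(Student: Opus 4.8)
The plan is to reduce everything to the elementary fact that contracting an edge which is \emph{present} in a graph does not alter its set of connected components. First I would record that $\Gamma'$ is connected: a contraction is a surjective quotient of graphs, hence preserves connectivity, so the notion of a cut is meaningful for $\Gamma'$. I would also observe that the edge set of $\Gamma'$ is exactly $E \setminus S$, so any cut of $\Gamma'$ is a priori a subset $c \subset E \setminus S$, which already forces $c \cap S = \varnothing$. This gives the easy containment $\Cuts(\Gamma') \subset \Cuts(\Gamma)$ once we know every such $c$ is genuinely a cut of $\Gamma$, and it pins down which subsets of $E$ can even arise as cuts of $\Gamma'$.

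The heart of the argument is a comparison which I would isolate as a sublemma. Fix any $c \subset E \setminus S$ and write $\Gamma \setminus c := (V, E \setminus c)$ and $\Gamma' \setminus c := (V', (E\setminus S)\setminus c)$. Since $c \cap S = \varnothing$, every edge of $S$ is still present in $\Gamma \setminus c$, so contracting $S$ identifies only vertices that already lie in a common connected component of $\Gamma \setminus c$. Consequently the contraction induces a bijection $\pi_0(\Gamma \setminus c) \iso \pi_0(\Gamma' \setminus c)$, under which the vertex partition of $\Gamma' \setminus c$ is the image of that of $\Gamma \setminus c$. I would prove this by contracting the edges of $S$ one at a time, using that contracting a single present edge merges two vertices already lying in one component and therefore leaves $\pi_0$ unchanged.

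Granting the sublemma, both conditions defining a cut transfer across the contraction. The condition ``$\Gamma \setminus c$ has exactly two connected components'' is equivalent to ``$\Gamma' \setminus c$ has exactly two connected components'' by the bijection on $\pi_0$. For the second condition I would check that an edge $e \in c$ joining the two components $A, B$ of $\Gamma \setminus c$ still joins the corresponding components $A', B'$ of $\Gamma' \setminus c$: its endpoints lie in distinct components $A \neq B$, and the contraction of $S$ only merges vertices within a single component, so their images remain in $A' \neq B'$ (in particular $e$ does not degenerate to a loop). Running this equivalence in both directions shows that a subset $c \subset E \setminus S$ is a cut of $\Gamma$ if and only if it is a cut of $\Gamma'$, which is exactly the displayed equality $\Cuts(\Gamma') = \{c \in \Cuts(\Gamma) \mid c \cap S = \varnothing\}$, and the inclusion $\Cuts(\Gamma') \subset \Cuts(\Gamma)$ follows.

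I expect the only real subtlety — the main obstacle — to be bookkeeping around multigraphs: since these dual graphs may carry loops and parallel edges, I would phrase the sublemma in terms of the connected-component functor rather than for simple graphs, and double-check in the crossing-edge step that contracting $S$ cannot collapse an edge of $c$ into a loop. Everything else is a direct unwinding of the definition of a cut.
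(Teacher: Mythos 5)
Your proof is correct, and complete at the level of detail one would want. Note, however, that there is nothing in the paper to compare it against: the paper states Lemma~\ref{lem:graphcuts} with no proof at all, treating it as an elementary graph-theoretic fact, and immediately moves on to Proposition~\ref{prop:blowup-product}. Your argument fills that gap soundly: the sublemma that contracting edges of $S$ (all still present in $(V, E\setminus c)$ when $c \cap S = \varnothing$) induces a bijection $\pi_0\bigl((V,E\setminus c)\bigr) \to \pi_0\bigl((V', (E\setminus S)\setminus c)\bigr)$ is exactly the right invariant, since both defining conditions of a cut --- ``exactly two components after removing $c$'' and ``every edge of $c$ joins the two components'' --- are statements about this $\pi_0$ and transfer across the bijection in both directions. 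Your attention to the multigraph bookkeeping (that no edge of $c$ can degenerate to a loop, because contraction only merges vertices within a single component of $(V,E\setminus c)$) is also the one point where a sloppier write-up could go wrong, and you handle it correctly.
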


\begin{proposition}
\label{prop:blowup-product}
Let $S$ be a log scheme and $I_1,I_2$ two monoid ideal sheaves. Then the blowup $B$ of $\Bl_{I_2} S$ in the pullback of $I_1$ is equal to the blowup $B' = \Bl_{I_1I_2}$.
\end{proposition}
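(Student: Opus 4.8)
The plan is to identify $B$ and $B'$ by showing that they represent the same functor on fs log schemes over $S$, and then to invoke Yoneda. Recall that the (saturated) logarithmic blowup $\Bl_I S \to S$ of a coherent sheaf of monoid ideals $I \subseteq \Mbar_S$ is characterized by the following universal property: it is the terminal object in the category of fs log schemes $T \to S$ for which the pullback of $I$ to $T$ is principal (equivalently, locally generated by a single element; equivalently, invertible as a sheaf of monoid ideals). Both blowups in the statement are of this form, and since we work throughout with fs log structures, the saturated blowup is the relevant universal object.

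First I would unwind the two sides via this universal property. Writing $p \colon \Bl_{I_2} S \to S$, the space $B$ is the blowup of $\Bl_{I_2} S$ along $p^{*} I_1$, so a map $T \to S$ of fs log schemes factors uniquely through $B$ if and only if the pullback of $I_2$ to $T$ is principal and, after passing to $\Bl_{I_2} S$, the pullback of $I_1$ is principal --- that is, if and only if \emph{both} $I_1$ and $I_2$ pull back to principal ideals on $T$ (the uniqueness of the factorization being automatic from terminality at each stage). On the other side, $T \to S$ factors uniquely through $B' = \Bl_{I_1 I_2} S$ if and only if the product $I_1 I_2$ pulls back to a principal ideal on $T$. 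It thus remains to prove the purely monoid-theoretic statement that, for two ideals in a sharp fs monoid, the product is principal precisely when both factors are; applying this stalkwise to the characteristic monoids $\Mbar_{T,\bar t}$ (which are fs, hence integral) then matches the two functors, and Yoneda gives a canonical isomorphism $B \cong B'$ over $S$.

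For the monoid statement, let $\overline{M}$ be a sharp fs monoid with ideals $I_1, I_2$, ordered by divisibility. The implication that each factor being principal forces the product to be principal is immediate: if $I_j = a_j + \overline{M}$ then $I_1 I_2 = (a_1 + a_2) + \overline{M}$. The converse is the only real content. Suppose $I_1 I_2$ is principal with minimum $c$, and write $c = a_0 + b_0$ with $a_0 \in I_1$ and $b_0 \in I_2$. For any $a \in I_1$ the element $a + b_0$ lies in $I_1 I_2$, so $a_0 + b_0 \leq a + b_0$, and cancelling $b_0$ yields $a_0 \leq a$; hence $a_0$ is the minimum of $I_1$ and $I_1$ is principal, and by symmetry so is $I_2$. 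The main obstacle is exactly this converse direction, where \emph{cancellativity} (integrality of $\overline{M}$) is what makes the argument work, together with the care required to phrase the universal property stalk-locally so that the monoid lemma can be applied on the characteristic monoids.
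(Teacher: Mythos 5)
Your proposal is correct and follows essentially the same route as the paper's proof: both sides are compared via the universal property of the log blowup (pullbacks of $I_1$ and $I_2$ become principal versus the pullback of $I_1I_2$ becoming principal), and the only substantive point --- that $I_1I_2$ principal forces $I_1$ and $I_2$ principal --- is settled by exactly the same cancellation argument on a generator $a_0 + b_0$. You are merely more explicit than the paper about the fs/saturated setting, the stalkwise reduction to characteristic monoids, and the role of integrality in the cancellation step.
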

\begin{proof}
We check that they satisfy the same universal property. The log scheme $B$ satisfies the universal property that for any $T \to B$, the pullback of $I_1$ and $I_2$ to $T$ locally have a smallest element; the log scheme $B'$ satisfies the universal property that for any $T \to B'$, the pullback of $I_1I_2$ locally has a smallest element. Clearly, if $I_1$ and $I_2$ locally have a smallest element, then $I_1 I_2$ locally has a smallest element. Conversely, if $I_1 I_2$ locally has a smallest element $ab$ with $a \in I_1, b \in I_2$, then by $ab \leq ac$ for $c \in I_2$ we see $b$ is a smallest element of $I_2$ and similarly $a$ is a smallest element of $I_1$. 
\end{proof}

Now we can explicitly write down the ideal we want to blow up in.

\begin{definition}
\label{def:igamma}
Let $\Uf \to \LC$ be a nuclear chart. Let $u$ be a point of the closed stratum and $\Gamma$ the corresponding tropical curve. Then define the log ideal $I_\Gamma$ of $\Uf$ to be
\[
\prod_{c = \{e_1,\dots,e_n\} \in \Cuts(\Gamma)} \prod_{\lambda_1,\dots,\lambda_n \mid r} (\ell(e_1)^{\lambda_1}, \dots, \ell(e_n)^{\lambda_n})
\]
where $\ell(e_i)$ denotes the length of edge $e_i$ (which is defined up to multiplication with units).
\end{definition}

\begin{proposition}
\label{def:igammaworks}
Let $\Uf \to \LC$ be a nuclear chart with corresponding graph $\Gamma$.
Then $\Uf \times_{\LC} \ARLC$ is the blowup $\Uf$ in the ideal $I_\Gamma$.
\end{proposition}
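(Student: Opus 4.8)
The plan is to show that both $\Uf \times_{\LC} \ARLC_r$ and the log blowup $\Bl_{I_\Gamma}\Uf$ represent the same functor on $\LogSch$, namely the functor sending a log scheme $T$ to the set of log maps $g\colon T \to \Uf$ for which the pulled-back curve $C_T$ is weakly $r$-rich. Since $\Uf \to \LC$ is strict étale and $\ARLC_r \subset \LC$ is a sub-CFG, a $T$-point of $\Uf \times_{\LC} \ARLC_r$ is exactly such a map $g$, so this fiber product represents that functor by construction. It therefore suffices to prove that a map $g\colon T \to \Uf$ factors through $\Bl_{I_\Gamma}\Uf$ (equipped with its canonical log structure) if and only if $C_T$ is weakly $r$-rich.

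First I would unwind the blowup side. Because $r < \infty$, the set $\Cuts(\Gamma)$ is finite and each cut has only finitely many tuples $(\lambda_i)_i$ of divisors of $r$, so $I_\Gamma$ is a genuine coherent log ideal and the log blowup exists. Writing $I_\Gamma$ as the product of the factor ideals $(\ell(e_1)^{\lambda_1},\dots,\ell(e_n)^{\lambda_n})$ over all cuts $c = \{e_1,\dots,e_n\} \in \Cuts(\Gamma)$ and all such tuples, Proposition~\ref{prop:blowup-product} applied repeatedly identifies $\Bl_{I_\Gamma}\Uf$ with the simultaneous blowup in all these factors; hence $g$ factors through it precisely when each factor ideal pulls back to a locally principal log ideal on $T$. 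By the universal property of log blowups, invertibility is checked on stalks, so $g$ factors through $\Bl_{I_\Gamma}\Uf$ if and only if at every geometric point $\bar t$ of $T$, for every cut $c \in \Cuts(\Gamma)$ and every tuple $(\lambda_i)$, the ideal $(\ell(e_1)^{\lambda_1},\dots,\ell(e_n)^{\lambda_n})$ has a smallest element in $\Mbar_{T,\bar t}$.

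Next I would unwind the weakly $r$-rich side and match the two pointwise conditions. Fix a geometric point $\bar t$ of $T$ lying over $\bar u \in \Uf$; the fibre $C_{\bar t}$ has dual graph $\Gamma_{\bar u}$, a contraction of $\Gamma$ in some edge set $S$, and its edge lengths are the images of the $\ell(e)$ under $\Mbar_{\Uf,\bar u} \to \Mbar_{T,\bar t}$. By definition $C_T$ is weakly $r$-rich at $\bar t$ iff for every cut of $\Gamma_{\bar u}$ and every tuple $(\lambda_i)$ the corresponding length ideal has a smallest element. The reconciliation with the fixed graph $\Gamma$ is provided by Lemma~\ref{lem:graphcuts}: the cuts of $\Gamma_{\bar u}$ are exactly the cuts $c \in \Cuts(\Gamma)$ with $c \cap S = \varnothing$. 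For a cut $c$ meeting $S$, some edge $e_i \in c$ is contracted at $\bar u$, so $\ell(e_i)$ maps to $0$ in $\Mbar_{T,\bar t}$; then $\ell(e_i)^{\lambda_i}$ is a unit and the factor ideal is the whole monoid, which is trivially principal. Thus the conditions coming from $I_\Gamma$ impose nothing for cuts meeting $S$ and agree with weak $r$-richness on the remaining cuts, which are precisely $\Cuts(\Gamma_{\bar u})$, so the two geometric-point conditions coincide.

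Combining the two descriptions, $g$ factors through $\Bl_{I_\Gamma}\Uf$ if and only if $C_T$ is weakly $r$-rich, so both log stacks represent the same functor and are canonically isomorphic over $\Uf$. I expect the main obstacle to be not any single computation but the bookkeeping of the previous paragraph: one must correctly translate the pointwise weakly-rich condition, which refers to the varying contracted graphs $\Gamma_{\bar u}$, into conditions on the single global ideal $I_\Gamma$ built from $\Gamma$. This is exactly the step where Lemma~\ref{lem:graphcuts} and the observation that contracted edges acquire unit length are indispensable.
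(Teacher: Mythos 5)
Your proposal is correct and follows essentially the same route as the paper: both arguments identify the fiber product with the functor of maps to $\Uf$ whose pulled-back curve is weakly $r$-rich, reduce both conditions to strict geometric points, and match the factor ideals of $I_\Gamma$ against the weak richness condition by splitting cuts of $\Gamma$ into those meeting a contracted edge (where the ideal becomes the unit ideal) and those surviving as cuts of the contracted graph via Lemma~\ref{lem:graphcuts}. The only cosmetic difference is that you invoke Proposition~\ref{prop:blowup-product} explicitly to decompose the blowup into its factors, a step the paper's proof uses implicitly when it deduces principality of each factor ideal from the universal property.
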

\begin{proof}
Denote the blowup of $\Uf$ in $I_\Gamma$ by $\pi: \Vf \to \Uf$.
Let $f: S \to \Uf$ be a morphism. Then the claim is that the natural curve $C_S/S$ we get is weakly $r$-rich if and only if $S$ factors through the blowup $\pi$. It is enough to do this for the case that $S$ is a strict geometric point of $\Uf$, as both weakly $r$-richness and factoring through the blowup can be decided on strict geometric points. So from now on, we assume that $S$ is $\Spec k$ with $k = \overline{k}$, and we let $\Gamma_S$ denote the dual graph of $C_S$. Note that $\Gamma_S$ is a contraction of $\Gamma$, and in particular $E(\Gamma_S)$ is a subset of $E(\Gamma)$. We use the notation $\ell_S,\ell_\Uf,\ell_\Vf$ for the length of an edge in respectively $S,\Uf$ and $\Vf$.

Assume $S \to \Uf$ factors through $\pi$. Let $c' = \{e_1,\dots,e_n\}$ be a cut of $\Gamma_S$ and $\lambda_i \mid r$ positive integers. Then by expanding the contracted edges, $c'$ induces a cut $c$ of $\Gamma$, so we see $(\ell_S(e_1)^{\lambda_1}, \dots, \ell_S(e_n)^{\lambda_n})$ is the pullback $f^* (\ell_\Uf(e_1)^{\lambda_1}, \dots, \ell_\Uf(e_n)^{\lambda_n})$. Since $f$ factors through $\pi$, and the pullback of $\pi^* (\ell_\Uf(e_1)^{\lambda_1}, \dots, \ell_\Uf(e_n)^{\lambda_n})$ is principal by the universal property of the log blowup, we have that $(\ell_S(e_1)^{\lambda_1}, \dots, \ell_S(e_n)^{\lambda_n})$ is principal. As this is true for any cut, we have that $C_S$ is weakly $r$-rich.

Now assume $C_S$ is weakly $r$-rich. We will prove that $S \to \Uf$ factors through the blowup. By the universal property it is enough to have that $f^* (\ell(e_1)^{\lambda_1}, \dots, \ell(e_n)^{\lambda_n})$ is principal for any cut $c = \{e_1,\dots,e_n\}$ of $\Gamma$ and $\lambda_i \mid r$. We consider two possibilities: either an edge in $c$ is contracted in $\Gamma_S$, or none are. In the first case, we see that $f^* (\ell(e_1)^{\lambda_1}, \dots, \ell(e_n)^{\lambda_n})$ is trivial and hence principal. In the second case, $c$ induces a cut $c'$ of $\Gamma_S$, and $f^* (\ell_\Uf(e_1)^{\lambda_1}, \dots, \ell_\Uf(e_n)^{\lambda_n}) = (\ell_S(e_1)^{\lambda_1}, \dots, \ell_S(e_n)^{\lambda_n})$, which is principal by our weakly $r$-richness assumption.
\end{proof}

To show that these ideals are compatible with refinement, we mainly need to show that cuts behave well with respect to contractions.

\begin{proposition}
\label{prop:logidealscomp}
Given a refinement $f: \Uf' \to \Uf$ of nuclear charts, with graphs $\Gamma',\Gamma$, we have $f^* I_\Gamma = I_{\Gamma'}$.
\end{proposition}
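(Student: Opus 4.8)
The plan is to translate the refinement of nuclear charts into a contraction of the associated graphs and then to match the two ideals factor by factor, with Lemma~\ref{lem:graphcuts} providing the combinatorial bookkeeping. Since $\Uf$ is a nuclear chart, every point of $\Uf$ carries a tropical curve that is a contraction of the tropical curve $\Gamma$ at the closed stratum; consequently the closed stratum of the refinement $\Uf'$ maps to a point of $\Uf$ at which exactly some subset $S \subseteq E(\Gamma)$ of edges has been smoothed, so that on closed strata $\Gamma \to \Gamma'$ is precisely the contraction in $S$ and $E(\Gamma') = E(\Gamma) \setminus S$.

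The one local input I would isolate is the effect of $f^*$ on lengths: for $e \in E(\Gamma')$ one has $f^* \ell(e) = \ell(e)$, whereas for $e \in S$ the edge is smoothed on $\Uf'$ and hence $f^* \ell(e)$ is a unit, i.e. it becomes trivial in $\Mbar_{\Uf'}$. Since pullback of log ideals commutes with products of ideals and with the formation of finitely generated ideals, it suffices to understand $f^*$ on each factor $(\ell(e_1)^{\lambda_1}, \dots, \ell(e_n)^{\lambda_n})$ of $I_\Gamma$ indexed by a cut $c = \{e_1, \dots, e_n\} \in \Cuts(\Gamma)$ and divisors $\lambda_i \mid r$.

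I would then split the product over $\Cuts(\Gamma)$ according to whether $c \cap S = \varnothing$. If $c \cap S \neq \varnothing$, choose $e_j \in c \cap S$; then $f^* \ell(e_j)^{\lambda_j}$ is a unit, so the pulled-back factor has a unit among its generators and therefore equals the whole monoid $\Mbar_{\Uf'}$, the unit ideal, which is neutral for the product of ideals and thus drops out. If $c \cap S = \varnothing$, every $e_i$ lies in $E(\Gamma')$, so $f^*\ell(e_i) = \ell(e_i)$ and the factor pulls back to the identical factor $(\ell(e_1)^{\lambda_1}, \dots, \ell(e_n)^{\lambda_n})$. By Lemma~\ref{lem:graphcuts} the surviving cuts, namely those with $c \cap S = \varnothing$, are exactly $\Cuts(\Gamma')$, and since the edges of such a cut coincide in $\Gamma$ and $\Gamma'$ the inner ranges $\lambda_i \mid r$ match verbatim. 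Hence $f^* I_\Gamma = \prod_{c \in \Cuts(\Gamma')} \prod_{\lambda_i \mid r} (\ell(e_1)^{\lambda_1}, \dots, \ell(e_n)^{\lambda_n}) = I_{\Gamma'}$.

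The main point to get right — the only place where anything could go wrong — is the claim that the factors attached to cuts meeting $S$ genuinely become the unit ideal and disappear from the product; this rests on the fact that a contracted edge has unit length after pullback, together with the elementary observation that a finitely generated log ideal one of whose generators is a unit equals all of $\Mbar_{\Uf'}$ and hence is neutral for multiplication of ideals. Everything else is formal: the compatibility of $f^*$ with products and generators of log ideals, and the identification $\Cuts(\Gamma') = \{c \in \Cuts(\Gamma) : c \cap S = \varnothing\}$ supplied by Lemma~\ref{lem:graphcuts}.
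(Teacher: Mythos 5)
Your proposal is correct and follows essentially the same route as the paper's proof: identify the refinement with a contraction $\Gamma \to \Gamma'$, use Lemma~\ref{lem:graphcuts} to split the cuts of $\Gamma$ into those meeting a contracted edge (whose factors pull back to the unit ideal and drop out) and those surviving as cuts of $\Gamma'$ (whose factors pull back verbatim). Your write-up merely makes explicit the formal bookkeeping (pullback commuting with products and generators, the unit ideal being neutral) that the paper leaves implicit.
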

\begin{proof}
Note that we have a specialisation morphism $\Gamma \to \Gamma'$, which contracts those edges whose length becomes $0$. We then can use Lemma~\ref{lem:graphcuts} to classify the cuts of $\Gamma$ into cuts meeting a contracted edge, and cuts of $\Gamma'$.
Then for any cut $c$ of $\Gamma$ and any choice of $\lambda_i | r$, if the specialisation morphism $\Gamma \to \Gamma'$ contracts an edge in the cut, then $f^* (\ell_\Uf(e_1)^{\lambda_1}, \dots, \ell_\Uf(e_n)^{\lambda_n})$ is the unit ideal. And if $c$ maps to a cut $c'$ under the contraction morphism, then $f^* (\ell_\Uf(e_1)^{\lambda_1}, \dots, \ell_\Uf(e_n)^{\lambda_n}) = (\ell_{\Uf'}(e_1)^{\lambda_1}, \dots, \ell_{\Uf'}(e_n)^{\lambda_n})$. This completes the proof.
\end{proof}

\begin{example}
\label{ex:subdiv}
Consider the curve of arithmetic genus $1$ with dual graph $\Gamma$ a triangle, with its universal deformation $C$ over $S = \Spec k[[x,y,z]]$ with labels of edges $x,y,z$. We give $S$ the log structure induced by the divisor $xyz =0$ and $C$ the log structure induced by the pullback of this divisor. Then this is in fact a nuclear chart. We fix $r = 1$. In this case the ideal $I_\Gamma$ we blow up in is the ideal $(x^2y,x^2z,y^2x,y^2z,z^2x,z^2y,xyz)$. As $xyz$ is contained in the convex hull of the other generators, this is the same as the blowup in the ideal $(x^2y,x^2z,y^2x,y^2z,z^2x,z^2y)$. This blowup satisfies the universal property that the pullback of this ideal is principal.

There is a corresponding story to be told on the level of rational polyhedral cone complexes. Letting $M$ denote the monoid $\N^3$, the scheme $S$ corresponds to the rational cone $\sigma = \Hom(M,\R) = \R_{\geq 0}^3$. Then subcones of subdivisions of $\sigma$ correspond to sub CFGs of the CFG $S$ satisfying certain (in)equalities. For example, the ray $\R_{\geq 0} (1,1,1)$ corresponds to the sub CFG where all three edges have the same length. Now because all subsets of size $2$ are in a cut, the sub CFG $\ARLC_1 \times_\LC S$ of $S$ is defined by the fact that the edges are pairwise comparable. That defines a subdivision of the cone, as shown in the Figure~\ref{fig:subdiv}, where the subdivision has been intersected with the hyperplane $\{(e_1,e_2,e_3) \mid e_1 + e_2 + e_3 = 1\}$.

Here the subdivision is given by the planes $a=b,a=c,b=c$. That means that on one side $a \geq b$, and on the other side $a \leq b$, so $a$ and $b$ are comparable. Note that the union of the vertices in this picture correspond exactly to the sub CFG $\RLC_1 \times_\LC S$.

One can also see the compatibility with generalisation in this picture. Contracting a certain edge in a tropical curve corresponds to setting a certain edge length to $0$. On the cone side, this corresponds to intersecting with $\R_{\geq 0}^3$ with $\R_{\geq 0}^2$, or the subdivision of the triangle in Figure~\ref{fig:subdiv} with one of the edges of the triangle. With this perspective one gets exactly the subdivision of $\R^2$ corresponding to the $2$-gon, also shown in Figure~\ref{fig:trop}.
\end{example}
    
    
    



\begin{example}
We continue Example~\ref{ex:subdiv} for general $r$. We describe the corresponding subdivision of $\R_{\geq 0}^3$. Since every size two subset of the three edges is a cut, the subdivision is induced by the two-dimensional planes $\lambda e_i = \lambda' e_j$ for $i,j \in \{1,2,3\}$ distinct and for $\lambda,\lambda'$ divisors of $r$. See Figure~\ref{fig:sub-rtwo} for a picture in the case $r = 2$.
\end{example}
\begin{figure}[ht]
\begin{subfigure}{.49\textwidth}
\centering
  \begin{tikzpicture}[thick,scale=.56]
    \begin{scope}[every node/.style={draw=none, inner sep =-0pt}]
    \node (B) at (90:6) {};
    \node (C) at (210:6) {};
    \node (D) at (330:6) {};
    
    \draw (B) -- (C) node (BC) [midway] {};
    \draw (B) -- (D) node (BD) [midway] {};
    \draw (C) -- (D) node (CD) [midway] {};
    
    \draw (B) -- (CD);
    \draw (C) -- (BD);
    \draw (D) -- (BC);
    
    \node (A) at (0:0) {};
    \end{scope}

    \node at (56:2.1) {};
    \node at (124:2.1) {};
    \node at (180:2.1) {};
    \node at (236:2.2) {};
    \node at (304:2.2) {};
    \node at (0:2.1) {};

  \end{tikzpicture}
\caption{A subdivision of $\R_{\geq 0}^3$ intersected with the plane $e_1 + e_2 + e_3 = 1$.}
\label{fig:subdiv}
\end{subfigure}
\begin{subfigure}{.49\textwidth}
  \centering
  \begin{tikzpicture}[thick,scale=0.56]
    
    \begin{scope}[every node/.style={draw=none, inner sep =-0pt}]
    \node (B) at (90:6) {};
    \node (C) at (210:6) {};
    \node (D) at (330:6) {};

    \def\x{1};
    
    \draw [shorten <= -\x pt, shorten >= -\x pt] (B) -- (C) node (BC) [midway] {} node (BBC) [pos = .333] {} node (BCC) [pos = .666] {};
    \draw [shorten <= -\x pt, shorten >= -\x pt] (B) -- (D) node (BD) [midway] {} node (BBD) [pos = .333] {} node (BDD) [pos = .666] {};
    \draw [shorten <= -\x pt, shorten >= -\x pt] (C) -- (D) node (CD) [midway] {} node (CCD) [pos = .333] {} node (CDD) [pos = .666] {};

    \node (1) at (0:0) {};
    \node (2) at (intersection of  B--CD and C--BBD){};
    \node (3) at (intersection of  B--CD and C--BDD){};
    \node (4) at (intersection of  C--BD and B--CCD){};
    \node (5) at (intersection of  C--BD and B--CDD){};
    \node (6) at (intersection of  D--BC and B--CDD){};
    \node (7) at (intersection of  D--BC and B--CCD){};

    \draw [shorten <= -\x pt, shorten >= -\x pt] (B) -- (CD);
    \draw [shorten <= -\x pt, shorten >= -\x pt] (B) -- (CCD);
    \draw [shorten <= -\x pt, shorten >= -\x pt] (B) -- (CDD);
    
    \draw [shorten <= -\x pt, shorten >= -\x pt] (C) -- (BD);
    \draw [shorten <= -\x pt, shorten >= -\x pt] (C) -- (BBD);
    \draw [shorten <= -\x pt, shorten >= -\x pt] (C) -- (BDD);
    
    \draw [shorten <= -\x pt, shorten >= -\x pt] (D) -- (BC);
    \draw [shorten <= -\x pt, shorten >= -\x pt] (D) -- (BBC);
    \draw [shorten <= -\x pt, shorten >= -\x pt] (D) -- (BCC);

    \end{scope}

  \end{tikzpicture}\caption{A subdivision of $\R_{\geq 0}^3$, intersected with the plane $e_1 + e_2 + e_3 = 1$}
  \label{fig:sub-rtwo}
\end{subfigure}
\caption{}
\label{fig:trop5}
\end{figure}

Proposition~\ref{def:igammaworks} and Proposition~\ref{prop:logidealscomp} together form the following proposition.

\begin{corollary}
\label{cor:blowup}
The stack $\ARLC_r$ is the log blowup of $\LC$ in the logarithmic ideal descended from the $I_\Gamma$.
\end{corollary}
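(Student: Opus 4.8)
The plan is to glue the local computations of Proposition~\ref{def:igammaworks} into one global statement by descent along the strict \'etale cover of $\LC$ by nuclear charts. First I would recall that the nuclear charts $\Uf \to \LC$ together form a strict \'etale cover, so it suffices to produce a single logarithmic ideal sheaf $I$ on $\LC$ whose restriction to each chart is $I_\Gamma$, and then to identify $\Bl_I \LC$ with $\ARLC_r$. Both halves of this reduce directly to the two preceding propositions, so the argument is essentially a matter of organising the descent.

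The first step is to check that the family $(I_\Gamma)_\Gamma$ descends to a logarithmic ideal on $\LC$. A logarithmic ideal sheaf on $\LC$ is the same datum as a compatible system of log ideals on the members of a strict \'etale cover, where compatibility means the ideal on a chart pulls back to the ideal on any refinement. This is precisely the content of Proposition~\ref{prop:logidealscomp}: for a refinement $f: \Uf' \to \Uf$ one has $f^* I_\Gamma = I_{\Gamma'}$. Since any two nuclear charts admit a common refinement, this is the cocycle condition needed to glue the $I_\Gamma$ into a single log ideal $I$ on $\LC$.

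Next I would use that log blowups are local for the strict \'etale topology on the base, i.e.\ forming $\Bl_I$ commutes with strict \'etale base change. This holds because the log blowup is characterised by the universal property that the pullback of $I$ becomes locally principal, and that property is itself \'etale-local. Hence $\Bl_I \LC$ restricts over each nuclear chart $\Uf$ to $\Bl_{I_\Gamma}\Uf$, which by Proposition~\ref{def:igammaworks} is exactly $\Uf \times_{\LC} \ARLC_r$. Thus $\Bl_I \LC$ and $\ARLC_r$ agree after strict \'etale base change to every chart of the cover, and since both are (sub-)CFGs over $\LC$ with identifications compatible with the transition maps of the cover, they are canonically isomorphic over $\LC$.

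The main obstacle I expect is the bookkeeping of the descent rather than any new geometric input: one must verify that the notion ``logarithmic ideal sheaf on $\LC$'' is genuinely governed by the strict \'etale topology, and that the isomorphisms furnished by Proposition~\ref{def:igammaworks} are natural in the chart, so that they glue rather than merely agreeing chart-by-chart. This naturality follows once one matches the \'etale-local universal property of the log blowup against the universal property of weak $r$-richness, which is manifestly \'etale-local since it is tested on strict geometric points; with that matching in place, the conclusion is formal.
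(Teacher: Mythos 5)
Your proposal is correct and takes essentially the same route as the paper: the paper deduces the corollary exactly by combining Proposition~\ref{def:igammaworks} (the chart-by-chart identification of $\Uf \times_{\LC} \ARLC_r$ with the blowup in $I_\Gamma$) with Proposition~\ref{prop:logidealscomp} (compatibility of the $I_\Gamma$ under refinement), leaving the strict \'etale descent bookkeeping implicit. The points you spell out --- that nuclear charts form a strict \'etale cover and that the log blowup's universal property is \'etale-local, so the local identifications glue --- are precisely what the paper takes for granted.
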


\begin{proof}[Proof of Theorem~\ref{thm:comp}]
We have already shown that $\ARLC_r$ has a monomorphism from $\RLC_r$, and that is represented by a log blowup $\Min(\ARLC_r)$ of $\Min(\LC)$. Log blowups are always proper (\cite[Theorem~2.6.3.3]{ogus}), so the stack $\Min(\ARLC_r)$ is proper over $\Min(\LC)$. 

Next, we will prove the map $\Min(\RLC_r) \to \Min(\ARLC_r)$ is an open embedding. By Proposition~\ref{prop:richopen} and \cite[Theorem~B.2]{wise2016}, we have the following open embeddings of algebraic stacks
\[
\begin{tikzcd}
\Min(\RLC_r)  \arrow[d,hook] & \Min(\ARLC_r) \arrow[d,hook]\\
\Log(\RLC_r) \arrow[r,hook] & \Log(\ARLC_r) \\
\end{tikzcd}
\]
The map $\Min(\RLC_r)  \to \Min(\ARLC_r) $ is defined by mapping a minimal $r$-rich log curve to the unique minimal weakly $r$-rich log curve lying over it, and a priori, filling in this map does not make the diagram commute. However, we claim that a minimal $r$-rich log curve already is a minimal weakly $r$-rich log curve. To show this, it is enough to consider the case of a curve $C$ over a geometric point $S = \Spec k$ with the dual graph having only one circuit-connected component. Let $\Gamma$ be the corresponding tropical curve with vertices $V$ and edges $E$, and let $\Mbar = \N^E$ denote the characteristic monoid of the minimal log structure on $S$. Then we consider a minimal $r$-rich log structure $M_r$, together with the unique minimal weakly $r$-rich log structure $f: M_a \to M_r$ lying over it. Then the claim is that $f$ is an isomorphism. It is enough to check this on characteristic monoids. By the assumption that there is one circuit-connected component, we have that $\Mbar_r = \N$. We also have a map $g: \Mbar \to \Mbar_r$, denoting the edge lengths. For every edge $e$ we have that $g(e) \in \N$ is a divisor of $r$. Also, $g$ factors through a map $h: \Mbar \to \Mbar_a$.

Note that for any cut $c \subset E$, by weakly $r$-richness of $\Mbar_a$, there is a minimal element $x$ among $A = \{\frac{r}{g(e)}h(e) \mid e \in c\} \subset \Mbar_a$. Then for all $x' \in A$, we have $x' \geq x$ or equivalently $x'-x \in \Mbar_a$. However, by construction $\bar{f}$ is constant with value $r$ on the set $A$, so $\bar{f}(x'-x) = 0$. But by construction $\bar{f}: \Mbar_a \to \Mbar_r$ is a sharp morphism, so we have $x' = x$ and $A$ consists of a single element. Equivalently, $r/f(e)g(e)$ is constant. Then as $\Mbar_a$ and $\Mbar_r$ are both fs and minimal we see that $\Mbar_c = \Mbar_r$ proving the claim.

That means that the above diagram with $\Min(\RLC_r) \to \Min(\ARLC_r)$ filled in is commutative, and hence $\Min(\RLC_r) \to \Min(\ARLC_r)$ is an open embedding.

Lastly, we need that the image of $\Min(\RLC_r)$ in $\Min(\ARLC_r)$ is dense. This follows from all our stacks having the stack of smooth curves as a dense open.
\end{proof}

Furthermore, we will prove the stack $\Min(\ARLC_1)$ is in fact smooth. For this we first need the following lemma on graphs.

\begin{lemma}
\label{lem:trees}
Let $G = (V,E)$ be a graph. Let $P$ be the set of partial orders on $E$ such that every cut of $G$ has a smallest element, and let $\leq$ be a minimal element of $P$ (i.e., such that there is no strictly weaker inequality also in $P$). Then every circuit-connected component has a unique minimal edge, and for every other edge there is a unique largest edge smaller than it.
\end{lemma}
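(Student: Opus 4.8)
\emph{The plan is to} prove both assertions simultaneously by induction on $|E|$, after two preliminary reductions, replacing the direct ``uncrossing of cuts'' analysis by a contraction argument.

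First I would reduce to a single circuit-connected component and dispose of the unique-minimum claim. Given $\le\,\in P$, let $\le'$ be the relation obtained by keeping only those pairs $a\le b$ with $a,b$ in the same circuit-connected component. This is again a partial order, and it still lies in $P$ because every cut is contained in a single circuit-connected component (as used in the proof of Lemma~\ref{lem:61015}); by minimality $\le\,=\,\le'$, so comparabilities only occur within components and it suffices to treat each component separately. The first conclusion then needs no minimality: if a component had two distinct $\le$-minimal edges $e_1,e_2$, then, since any two edges of a circuit-connected component lie in a common cut $c$ (again Lemma~\ref{lem:61015}), the smallest element of $c$ would lie weakly below both, forcing $e_1=e_2$. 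As a finite poset with a unique minimal element has a least element, each component has a least edge $m$.

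Next I would set up the induction. Single-edge components are trivial, so fix a component $B$ with $|B|\ge 2$ and its least edge $m$, and let $G'=G/m$ be the contraction of $m$. I would invoke the standard fact that the cuts of $G'$ are exactly the cuts of $G$ not containing $m$. Then $\le'\,:=\,\le|_{E\setminus m}$ lies in $P(G')$, since every cut of $G'$ is a cut of $G$ avoiding $m$ and hence retains its smallest element. The heart of the argument is that \emph{minimality descends}: if $\le''\,\in P(G')$ were strictly weaker than $\le'$, extend it to $E$ by declaring $m$ below every edge of $B$ and incomparable to all others, obtaining $\widehat{\le}$. This is a partial order (transitivity through $m$ holds because $\le''$-comparabilities stay within components of $G'$, which sit inside $B$), and it lies in $P(G)$: a cut through $m$ is contained in $B$ and has $m$ as its smallest element, while a cut avoiding $m$ is a cut of $G'$ and keeps a smallest element under $\le''$. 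Since $\widehat{\le}\subsetneq\,\le$, this contradicts minimality of $\le$; hence $\le'$ is minimal in $P(G')$.

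Finally I would conclude by reassembly. By induction each component of $G'$ has a least edge and the unique-largest-smaller property. Because $m<_\le a$ for every $a\in B\setminus m$ and the order among $B\setminus m$ is precisely $\le'$, the strict down-set of each such $a$ in $B$ is its $\le'$-down-set together with $m$; its maximum is the $\le'$-largest edge below $a$ when that exists, and $m$ otherwise, hence unique in both cases, while $m$ is the least edge of $B$. Components disjoint from $B$ are unchanged, so both conclusions follow. \emph{The hard part will be} the descent of minimality under contraction: one must verify that $\widehat{\le}$ is genuinely a partial order and that it remains in $P(G)$, which is exactly where the facts that every cut lies in one circuit-connected component and that cuts through $m$ automatically have least element $m$ are used; establishing the cut/contraction correspondence cleanly is the other point requiring care.
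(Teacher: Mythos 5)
Your proposal is correct and takes essentially the same route as the paper's proof: induction on $|E|$, reduction to within-component comparabilities, extraction of a least edge $m$ of a circuit-connected component via common cuts, and contraction of $m$ combined with the cut/contraction correspondence (Lemma~\ref{lem:graphcuts}). The only real difference is one of rigor: you explicitly verify that minimality of $\leq$ descends to the restricted order on $G/m$ (via the extension $\widehat{\leq}$ and a contradiction), a step the paper's terser proof applies implicitly when invoking the induction hypothesis for $G'$.
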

\begin{proof}
We will prove this with induction on $|E|$, with the base case being trivial.

For the general case, we use that cuts always lie in a single circuit-connected component to reduce to the case where $G$ is circuit-connected. Then, as any two edges always lie in a cut together, for every two edges there is an edge that is smaller or equal to both. In particular, there is a smallest edge $m$. Clearly, every cut that contains $m$ already has a smallest element. The cuts that do not contain $m$ correspond to the cuts of the graph $G'$ where $m$ is contracted. This graph $G'$ is not necessarily circuit-connected, but by the induction hypothesis we know that this lemma holds for $G'$. Then adding the relation $m \leq e$ for the minimal $e \in E(G')$ proves our claim also holds for $G$.
\end{proof}
\begin{example}
Figure~\ref{fig:tree} contains an example of Lemma~\ref{lem:trees}. The graph $G$ with edge set $E = \{a,b,c,d,e,f\}$ is described in Figure~\ref{fig:tree-one}. If we want a partial order on $E$ such that every cut has a smallest element, it is both necessary and sufficient that $\{a,b,c\}$ contains a smallest element and all of $\{d,e,f\}$ are pairwise comparable. One such choice might be $a \leq b, c$ and $d \leq e \leq f$. The corresponding description of $\leq$ is shown in Figure~\ref{fig:tree-two}.
\begin{figure}[ht]
\begin{subfigure}{.49\textwidth}
  \centering
  \begin{tikzpicture}[thick,scale=.8]
    \begin{scope}[every node/.style={circle, draw,fill=black!50,inner sep=0pt, minimum width=4pt}]
    \node (A) at (0,0) {};
    \node (B) at (3,0) {};
    \node (C) at (6,0) {};
    \node (D) at (4.5,2.59807621135) {};
    \end{scope}

    \draw (A) -- (B) -- (C) -- (D) -- (B);
    \def\x{60};
    \draw (A) to[out=-\x,in=180+\x] (B);
    \draw (A) to[out=\x,in=180-\x] (B);
    
    \node at (1.5,1.1) {$a$};
    \node at (1.5,-.3) {$b$};
    \node at (1.5,-1.1) {$c$};

    \node at (4.5, -.3) {$d$};
    \node at (3.5, 1.5) {$e$};
    \node at (5.5, 1.5) {$f$};

  \end{tikzpicture}
  \caption{The graph $G$}
  \label{fig:tree-one}
\end{subfigure}
\begin{subfigure}{.49\textwidth}
  \centering
  \begin{tikzpicture}[thick,scale=1.4]

    \node (a) at (2,0) {$a$};
    \node (b) at (1.2,1) {$b$};
    \node (c) at (2.8,1) {$c$};

    \node (d) at (4,0) {$d$};
    \node (e) at (4,1) {$e$};
    \node (f) at (4,2) {$f$};

    \draw [-to](a) -> (b);
    \draw [-to](a) -> (c);

    \draw [-to](d) -> (e);
    \draw [-to](e) -> (f);

  \end{tikzpicture}
  \caption{The corresponding forest $A$}
  \label{fig:tree-two}
\end{subfigure}
\caption{}
\label{fig:tree}
\end{figure}
\end{example}

\begin{corollary}
\label{cor:locfree}
Let $C/\Spec k$ be a minimal weakly $1$-rich log curve over an algebarically closed field. Then the characteristic monoid $\Mbar_k$ is free.
\end{corollary}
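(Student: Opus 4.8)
The plan is to reduce freeness of $\Mbar_k$ to the smoothness (unimodularity) of a single cone, and then to read off a $\Z$-basis from the forest structure produced by Lemma~\ref{lem:trees}. By Corollary~\ref{cor:blowup} the stack $\ARLC_1$ is the log blowup of $\LC$ along the weakly rich subdivision of $\Mbar_\Gamma = \R_{\geq 0}^{E}$, so a minimal weakly $1$-rich log curve over $\Spec k$ corresponds to a cone $\tau$ of that subdivision, and $\Mbar_k$ is the characteristic monoid dual to $\tau$, namely $\tau^\vee \cap \Z^{E}$ modulo its unit group (here I identify $\Z^E$ with its dual via the standard pairing, so that the length $\ell(e)$ is the $e$-th standard generator $\epsilon_e$ and a point of $\tau$ has coordinates $x_e = \langle \ell(e),x\rangle$). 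Under this identification $\Mbar_k$ is free exactly when $\tau$ is a smooth cone. Since every cone of the subdivision is a face of a maximal cone, and faces of smooth cones are smooth, it suffices to treat the maximal cones $\Mbar_f$.

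Next I would analyse a maximal cone $\Mbar_f$, which is cut out of $\R_{\geq 0}^{E}$ by the inequalities $x_{f(c)} \le x_e$ ranging over cuts $c \in \Cuts(\Gamma)$ and edges $e \in c$. The partial order these generate lies in the poset $P$ of Lemma~\ref{lem:trees} (every cut acquires the least edge $f(c)$), and because a maximal cone is defined by an irredundant system of inequalities it is a minimal element of $P$. Applying Lemma~\ref{lem:trees}, every circuit-connected component $t$ has a unique minimal edge $m_t$, and every remaining edge $e$ has a unique parent $p(e)$, the largest edge strictly below it; the covering relations $p(e) \lessdot e$ are precisely the irredundant facet inequalities $x_e \ge x_{p(e)}$ of $\Mbar_f$, together with the inequalities $x_{m_t} \ge 0$.

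Finally I would exhibit the $\Z$-basis. The inward facet normals of $\Mbar_f$ are the $|E|$ vectors $\{\ell(m_t)\}_{t}\cup\{\ell(e)-\ell(p(e))\}_{e\ \text{non-minimal}}$, and these generate the dual cone $\tau^\vee$, hence generate $\Mbar_k$. Ordering the edges so that each parent precedes its children, the matrix expressing these vectors in the standard basis $\{\ell(e)\}$ is unipotent and triangular, so it has determinant $\pm 1$; the vectors therefore form a $\Z$-basis of $\Z^{E}$. Consequently $\tau^\vee$ is a smooth cone and $\Mbar_k = \tau^\vee\cap\Z^{E}\cong\N^{E}$ is free. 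The main obstacle I anticipate is the bookkeeping between the Gillam-minimality/blowup description of $\Mbar_k$ and the purely combinatorial forest of Lemma~\ref{lem:trees}: one must check carefully that a minimal weakly rich log structure really corresponds to a cone of the subdivision, and that the irredundant facets of a maximal cone are governed by a \emph{minimal} partial order in $P$ so that Lemma~\ref{lem:trees} genuinely applies; once these identifications are in place the triangularity computation is routine.
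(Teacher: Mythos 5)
Your proposal is correct and is essentially the paper's own argument read on the dual (fan) side: both rest on identifying the minimal weakly $1$-rich structures through the log blowup of Corollary~\ref{cor:blowup}, both apply Lemma~\ref{lem:trees} to the partial order generated by the choice function $f$ (where your appeal to ``irredundant inequalities'' for minimality in $P$ deserves the same one-line verification the paper also omits: antisymmetry of the generated order forces any weaker order in $P$ to contain every relation $f(c)\leq e$), and both conclude with the same unipotent-triangular basis computation. The paper phrases this multiplicatively rather than dually --- the monoid $M_f \subset \Z^E$ generated by $\N^E$ and the differences $e - f(c)$ is shown to be free ($|E|$ generators of full rank), and the non-maximal strata are handled by quotienting $M_f$ by the face of units --- which is precisely your ``facet normals form a $\Z$-basis'' and ``faces of smooth cones are smooth'' steps, translated through the duality between a cone and its monoid of lattice points.
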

\begin{proof}
Let $\Gamma$ denote the tropical curve corresponding to $C$, and let $E = \{e_1,\dots, e_m\}$ denote its edge set. We will explicitly describe the characteristic monoid $\Mbar_k$. We pick a choice function $f : \Cuts(\Gamma) \to E$ sending a cut to the edge with the smallest length (if there are multiple edges with the smallest length, then the one with the smallest index), and then we define $M_f \subset \Z^E$ to be the monoid generated by $\N^E$ and $e - f(c)$ for every cut $c$ and every edge $e\in c$. This is the characteristic monoid of a pre-log structure corresponding to $\Mbar_k$, so one has to divide out by the kernel of the structure morphism $M_f \to k/k^*$ to obtain $\Mbar_k$.

We first give an explicit formula for $M_f$. Consider the pre-order $\leq$ on $E$ generated by $f(c) \leq e$ for any cut $c$ and any edge $e \in c$. By Lemma~\ref{lem:trees} this pre-order can be described as follows. Every circuit-connected component has a unique smallest edge for $\leq$. We denote the set of these edges by $E_0$. Let $E_1 = E \setminus E_0$ be the set of edges that are not the smallest in their component. For every edge $e \in E_1$, there is a unique largest edge smaller than it. Denote this edge by $g(e) = \max\{e' : e' < e\}$. By Lemma~\ref{lem:trees} the preorder $\leq$ is generated by $g(e) \leq e$ for $e \in E_1$.

This in particular implies that for any cut $c$ and any edge $e \in c$ we have that the inequality $f(c) \leq e \in \Z^E$ can be written as a composition of inequalities \[f(c) = g^k(e) \leq \dots \leq g^2(e) \leq g(e) \leq e.\]
In particular, $e - f(c)$ is a non-negative linear combination of $e' - g(e')$ for $e' \in E_1$. Then we see that $M_f \sub \Z^E$ is generated by the elements $e$ for $e\in E_0$, and by the elements $e - g(e)$ for $e \in E_1$. In total $M_f$ has $|E|$ generators and rank $|E|$, hence it is free.

The characteristic monoid $\Mbar$ of the logification of this pre-log structure is the quotient of $M_f$ by the kernel of the morphism $M_f \to k/k^*$. The target is a sharp monoid, so the kernel is a face of $M_f$, hence the inclusion of the kernel in $M_f$ is isomorphic to the natural inclusion $\N^{n} \to \N^{|E|}$ for some $n \leq |E|$, so this quotient $\Mbar = \N^{|E|-n}$ is also free.
\end{proof}

We already knew the stack $\Min(\ARLC_1)$ is log \'etale over $\Min(\LC)$, as it is a blowup. We then get the following theorem.
\begin{theorem}
\label{thm:smooth}
The stack $\Min(\ARLC_1)$ is smooth.
\end{theorem}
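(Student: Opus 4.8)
The plan is to deduce smoothness from two facts: that $\Min(\ARLC_1)$ is log smooth over $\Spec \Z$, and that its characteristic monoids are free, the latter being exactly Corollary~\ref{cor:locfree}. The guiding principle is that a fine saturated log stack which is log smooth over a base carrying the trivial log structure and whose characteristic monoids are free has smooth underlying stack: étale locally it then admits a chart by a free monoid, and is therefore étale over an affine space.

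For the first ingredient, recall that by Corollary~\ref{cor:blowup} the morphism $\Min(\ARLC_1) \to \Min(\LC)$ is a log blowup, and log blowups are log étale. Since $\Min(\LC) = \Mf$ is log smooth over $\Spec \Z$ (the stack of prestable curves is smooth, and its boundary carries the standard toroidal log structure), the composite $\Min(\ARLC_1) \to \Mf \to \Spec \Z$ is log smooth, being a log étale morphism followed by a log smooth one.

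For the second ingredient, I would invoke Kato's structure theorem for log smooth morphisms (see \cite{ogus}). Because $\Spec \Z$ carries the trivial log structure, log smoothness of $\Min(\ARLC_1)$ means that étale locally there is a fine saturated monoid $Q$ and a chart $Q \to M$ such that the induced morphism $\Min(\ARLC_1) \to \Spec \Z[Q]$ is strict and étale. Choosing the chart to be neat at a geometric point $x$, we may take $Q \cong \Mbar_x$, which by Corollary~\ref{cor:locfree} is isomorphic to $\N^d$ for some $d$. Then $\Spec \Z[Q] = \A^d_\Z$ is smooth over $\Spec \Z$, and since étale morphisms preserve smoothness, $\Min(\ARLC_1)$ is smooth.

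The main obstacle is in fact already resolved by Corollary~\ref{cor:locfree}, whose combinatorial heart is Lemma~\ref{lem:trees}: freeness of the characteristic monoid reflects that the weakly rich subdivision is unimodular. What remains is the formal passage from ``log smooth plus free characteristic monoids'' to ``smooth,'' which needs only the existence of neat charts and is standard. I stress that this argument is special to $r = 1$: for general $r$ the characteristic monoids need not be free, matching the tropical observation that smoothness of $\Min(\ARLC)$ corresponds to each maximal cone having an $\R_{\geq 0}$-basis that is simultaneously a $\Z$-basis of $\Z^{E(\Gamma)}$.
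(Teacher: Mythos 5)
Your proposal is correct and follows essentially the same route as the paper: the paper likewise combines log \'etaleness of the blowup $\Min(\ARLC_1) \to \Min(\LC)$ with the structure theorem for log smooth morphisms (Theorem~III.3.3.1 of \cite{ogus}) to factor the map to $\Spec \Z$ \'etale locally through $\Spec \Z[P]$ with $P$ the characteristic monoid, and then applies Corollary~\ref{cor:locfree} to conclude $P \cong \N^n$, so that $\Spec \Z[P] = \A^n_\Z$ is smooth. Your extra remarks on neat charts and on the failure for $r > 1$ are consistent with the paper's own discussion.
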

\begin{proof}
We will use Theorem~III.3.3.1 of \cite{ogus}, which \'etale locally around a geometric point $\overline{x}$ writes the map $\Min(\ARLC_1) \to \Spec \Z$ as a composite of a strict log \'etale (and hence \'etale) map, and the map $f_\theta$ which is a base change of $\Z[P] \to \Z$, where $P$ is the characteristic monoid at $\overline{x}$. By Corollary~\ref{cor:locfree}, the monoid $P$ is free, hence $\Z[P] = \A^n_\Z$ for some $n$, which is smooth over $\Z$. So in total, we get a composite of smooth maps, hence $\Min(\ARLC_1)$ is smooth over $\Spec \Z$.
\end{proof}

\begin{remark}
For $r > 1$, the equivalent of Corollary~\ref{cor:locfree} does not hold. We give a counterexample. Let $s$ be the largest proper divisor of $r$. Consider the curve $C/k$ with dual graph a triangle with edge lengths $a,b,c \in \Mbar_k$. We can choose $\Mbar_k$ to be the universal monoid satisfying $b \geq rc, rb \geq a \geq sb$. This automatically satisfies $a \geq rc$. By construction the curve is then minimal weakly $r$-rich. This monoid has extremal rays $(r,1,0),(r^2,r,1),(rs,1,r)$ and $(s,0,1)$. As $\Mbar^\gp = \Z^3$, we see that $\Mbar$ is not free (in fact, its dual $\Hom(\Mbar, \R_{\geq 0})$ is not even simplicial).

For $r = 2$ we have $s = 1$, and $\Mbar_k$ has as dual $\Hom(\Mbar, \R_{\geq 0})$ the unique (up to permutation of $a,b,c$) non-simplicial cone in Figure~\ref{fig:sub-rtwo}.

As a consequence, $\Min(\ARLC_r)$ is not smooth for $r > 1$.
\end{remark}

\subsection{\texorpdfstring{The stack of $r$-rich log curves}{The stack of r-rich log curves}}
\label{subsec:rrichagain}
Now we have shown that $\Min(\RLC_r)$ embeds into $\Min(\ARLC_r)$, we can easily show the final part of Theorem~\ref{thm:mainrlc}
\begin{theorem}
\label{thm:rlcsmoothalgebraic}
The stack $\Min(\RLC_r)$ is algebraic and smooth.
\end{theorem}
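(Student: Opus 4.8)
The plan is to deduce both assertions from the open immersion $\Min(\RLC_r) \hookrightarrow \Min(\ARLC_r)$ of Theorem~\ref{thm:comp}, together with the explicit description of minimal objects in Proposition~\ref{def:richminimal}. This realises the ``for free'' argument promised in the remark following the corollary to Theorem~\ref{thm:rich}.

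For algebraicity, I would argue as follows. By Corollary~\ref{cor:blowup} the stack $\Min(\ARLC_r)$ is a log blowup of $\Min(\LC) = \Mf$; since $\Mf$ is algebraic and log blowups are representable, $\Min(\ARLC_r)$ is algebraic. By Theorem~\ref{thm:comp} the map $\Min(\RLC_r) \to \Min(\ARLC_r)$ is an open immersion, and an open substack of an algebraic stack is algebraic, so $\Min(\RLC_r)$ is algebraic.

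For smoothness, the naive route of inheriting smoothness from the ambient stack is unavailable: by the remark following Theorem~\ref{thm:smooth} the stack $\Min(\ARLC_r)$ fails to be smooth as soon as $r > 1$, the non-smooth points being exactly those minimal weakly $r$-rich curves (with non-free, even non-simplicial, characteristic monoid) that lie in the complement of $\Min(\RLC_r)$. Instead I would repeat the argument of Theorem~\ref{thm:smooth}, feeding in Proposition~\ref{def:richminimal} in place of Corollary~\ref{cor:locfree}: the characteristic monoid of a minimal $r$-rich log curve at a geometric point $\overline{x}$ is $\N^T$, with $T$ the set of circuit-connected components, and in particular it is free. Applying Theorem~III.3.3.1 of \cite{ogus}, \'etale locally around $\overline{x}$ the structure map $\Min(\RLC_r) \to \Spec \Z$ factors as a strict (hence ordinary) \'etale map followed by a base change of $\Z[\N^T] = \A^{|T|}_{\Z} \to \Z$. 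As this exhibits $\Min(\RLC_r)$ locally as a composite of smooth maps, it is smooth over $\Spec \Z$.

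The only real content is the smoothness claim, and the step that must be isolated with care is the identification of the characteristic monoid carried by the open substack $\Min(\RLC_r)$. The subtlety is that minimal $r$-rich log structures need not be minimal log structures, so one cannot read off the monoid from $\Min(\LC)$; but the proof of Theorem~\ref{thm:comp} shows that on the image every minimal $r$-rich log structure is already minimal weakly $r$-rich, so the monoid genuinely is the free monoid $\N^T$ of Proposition~\ref{def:richminimal} and no further comparison of log structures is needed. For the remaining value $r = \infty$, where the compactification $\Min(\ARLC_r)$ is not available, one instead applies \cite[Theorem~B.2]{wise2016} directly, as already noted in the remark after Theorem~\ref{thm:rich}.
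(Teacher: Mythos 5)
Your proposal is correct and follows essentially the same route as the paper: algebraicity is deduced from $\Min(\RLC_r)$ being open inside the algebraic stack $\Min(\ARLC_r)$ of Theorem~\ref{thm:comp}, and smoothness from the freeness of the characteristic monoid $\N^T$ given by Proposition~\ref{def:richminimal} together with Theorem~III.3.3.1 of \cite{ogus}, exactly as in the paper's proof. Your additional observations --- that smoothness cannot be inherited from $\Min(\ARLC_r)$ when $r>1$, and that the $r=\infty$ case needs a separate argument (the paper handles it by covering $\Min(\RLC_\infty)$ by the opens $\Min(\RLC_r)$, whereas you invoke \cite[Theorem~B.2]{wise2016}) --- are sound but do not change the substance of the argument.
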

\begin{proof}[Proof of Theorem~\ref{thm:rich}]
By Theorem~\ref{thm:comp}, the stack $\Min(\RLC_r)$ is an open inside an algebraic stack, hence algebraic.

For smoothness, note that the characteristic monoid at a geometric point of $\Min(\RLC_r)$ is free by Proposition~\ref{def:richminimal}. Then Theorem~III.3.3.1 of \cite{ogus} states that \'etale locally around a geometric point $\overline{x}$ we can write the map from $\Min(\RLC_r)$ to $\Spec \Z$ as a composite of a strict log \'etale (and hence \'etale) map, and the map $f_\theta$ which is a base change of $\Spec \Z[P] \to \Spec \Z$, where $P$ is the characteristic monoid at $\overline{x}$. As $P$ is free, $f_\theta$ is the map $\A^n_\Z \to \Spec \Z$ for some $n$, and hence is smooth, so the map $\Min(\RLC_r) \to \Spec \Z$ is smooth. 
\end{proof}

\section{Comparison to work by Holmes-Biesel}
\label{sec:compbiesel}
There has already been a more involved definition of a moduli space of enriched structures on curves over general schemes in \cite{biesel2019}, and in this section we prove that this is the same as $\Min(\RLC_1)$. In Theorem~5.9 of \cite{biesel2019}, it is proven that their stack of enriched structures on curves is isomorphic to a certain substack of a universal N\'eron-model-admitting stack. We will give an isomorphism of $\Min(\RLC_1)$ to the same stack.

We recall a definition from \cite{holmes2015}.
\begin{definition}[Definition~10.1, \cite{holmes2015}]
\label{def:neronmodel}
Let $C/S$ be a nodal curve over an algebraic stack, smooth over a dense open substack $U \subset S$. Write $J$ for the Jacobian of $C_U/U$. This is an abelian scheme over $U$. A N\'eron-model-admitting-morphism for a curve $C/S$ is a morphism $f: T \to S$ of algebraic stacks such that
\begin{enumerate}
  \item $T$ is regular;
  \item $U \times_S T$ is dense in $T$;
  \item $f^* J$ admits a N\'eron model over $T$.
\end{enumerate}
\end{definition}
In Corollary~10.3 of \cite{holmes2015}, they define the stack $\widetilde{\Mcal}$ over $\Min(\LC)$, the universal N\'eron-model admitting morphism. This stack is not quasi-compact. They define when a curve is $r$-strongly aligned (not invariant under pullback), define $\widetilde{\Mcal}^{\leq r}$ as the terminal $r$-strongly aligned curve, and show that these are N\'eron-model-admitting over $\Min(\LC)$, and in fact $\widetilde{\Mcal} = \bigcup_r \widetilde{\Mcal}^{\leq r}$. This definition of being $r$-strongly aligned in the case $r = 1$ closely resembles our notion of $1$-richness, and the notion of being $r$-strongly aligned for some $r$ resembles our notation of being $\infty$-rich, except for the fact that their definitions do not work with log structures, and cannot be checked just on geometric points. Nevertheless, the resulting universal stacks $\widetilde{\Mcal}^{\leq 1}$ and $\widetilde{\Mcal}$ from their paper are in fact equal as we will prove to $\Min(\RLC_1)$ and $\Min(\RLC_\infty)$ respectively.

\begin{lemma}
\label{lem:nmam}
For $r \in \N_{\geq 1} \cup \{\infty\}$ the natural map $\Min(\RLC_r) \to \Min(\LC)$ is a N\'eron-model-admitting-morphism and hence factors uniquely through $\widetilde{\Mcal}$. For $r = 1$ this map factors through $\widetilde{\Mcal}^{\leq 1}$.
\end{lemma}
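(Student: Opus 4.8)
The plan is to check the three conditions of Definition~\ref{def:neronmodel} for the pullback of the universal curve along $f \colon \Min(\RLC_r) \to \Min(\LC)$, and then to read off the factorisations from universal properties. Conditions (1) and (2) are already in hand. By Theorem~\ref{thm:rlcsmoothalgebraic} the stack $\Min(\RLC_r)$ is smooth over $\Spec \Z$, hence regular, giving (1). For (2), the smooth locus $U \subset \Min(\LC)$ of the universal curve pulls back to the locus of smooth curves in $\Min(\RLC_r)$, which is dense since, as recorded in the proof of Theorem~\ref{thm:comp}, all our stacks contain the stack of smooth curves as a dense open.

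The heart of the matter is condition (3): that $f^* J$ admits a N\'eron model over $\Min(\RLC_r)$. Here I would invoke the main existence criterion of \cite{holmes2015}, that over a regular base with dense smooth locus the N\'eron model of the Jacobian of a nodal curve exists precisely when the curve is \emph{aligned}, meaning that at every geometric point the thicknesses of any two nodes lying on a common circuit of the dual graph generate comparable ideals. Thus it suffices to see that the universal curve over $\Min(\RLC_r)$ is aligned. By definition this curve is $r$-rich, so by Lemma~\ref{lem:61015} at every strict geometric point the lengths of the edges within a single circuit-connected component are $r$-close, that is of the form $\lambda_e a_t$ with $\lambda_e \mid r$ and $a_t \in \Mbar$ a common root. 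Lifting along the structure map $M_S \to \Ocal_{\ul S}$, the thickness of the node $e$ becomes, up to a unit, the power $\alpha_t^{\lambda_e}$ of a single element $\alpha_t$; hence the thicknesses of the nodes in one circuit-connected component are totally ordered by divisibility. Since every circuit lies in a single circuit-connected component, the thicknesses along any circuit are comparable, which is exactly alignment, and condition (3) follows.

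Once $f$ is known to be a N\'eron-model-admitting-morphism, the first factorisation is formal: $\widetilde{\Mcal}$ is by construction (Corollary~10.3 of \cite{holmes2015}) the terminal such morphism over $\Min(\LC)$, so $f$ factors through it uniquely. For $r = 1$ I would sharpen the alignment computation: now $\lambda_e \mid 1$ forces $\lambda_e = 1$, so the lengths within each circuit-connected component are all equal and the thicknesses $\alpha_e$ agree up to units. This equality of thicknesses is the $1$-strong alignment that cuts out $\widetilde{\Mcal}^{\leq 1}$, and since the latter is terminal among $1$-strongly aligned curves the map $\Min(\RLC_1) \to \Min(\LC)$ factors through $\widetilde{\Mcal}^{\leq 1}$.

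The main obstacle is the faithful dictionary between the logarithmic data used here and the scheme-theoretic input of \cite{holmes2015}: one must identify the length function $E \to \Mbar$ with Holmes' node thicknesses in $\Ocal_{\ul S}$ and confirm that the cut- and circuit-connected-component bookkeeping reproduces his alignment, respectively $1$-strong alignment, conditions verbatim --- bearing in mind the mismatch flagged earlier, that our $r$-closeness is imposed on cuts whereas $r$-alignment is phrased on circuit-connected components. For the bare existence of the N\'eron model only comparability of thicknesses along circuits is needed, and richness supplies this in the strong form above, so that step is robust; the delicate point is pinning down the precise shape of $1$-strong alignment sharply enough to apply the terminality of $\widetilde{\Mcal}^{\leq 1}$.
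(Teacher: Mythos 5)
Your overall skeleton (verify the three conditions of Definition~\ref{def:neronmodel}, then invoke terminality) matches the paper's, and your treatment of conditions (1) and (2) is acceptable --- the paper derives regularity from log regularity plus local freeness of the characteristic monoids via \cite[Theorem~III.1.11.6]{ogus} rather than from smoothness, but both routes work. The genuine gap is in condition (3). The criterion you attribute to \cite{holmes2015} is misquoted: the implication ``aligned $\Rightarrow$ the N\'eron model exists'' there requires the \emph{total space} of the curve to be regular, not merely the base. This is exactly the pitfall the paper flags in its own proof (``being aligned \dots does not imply that a N\'eron model exists; there are some complications having to do with regularity''), and it is fatal in the present situation: for $r>1$ (and for $r=\infty$) the universal curve over $\Min(\RLC_r)$ is \emph{not} regular, since at a node lying on an edge with $\lambda_e>1$ the curve is \'etale locally cut out by $xy=\alpha_t^{\lambda_e}$, which is singular. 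So your alignment computation, though correct as far as it goes, does not yield condition (3). The paper instead routes through \cite{poiret2020}: over a log regular base a (not necessarily separated) N\'eron model always exists, and since the universal curve over $\Min(\RLC_r)$ is log aligned, \cite[Theorem~8.3]{poiret2020} gives separatedness --- which is needed because N\'eron models in \cite{holmes2015} are separated by definition.

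The $r=1$ factorisation has the same issue in miniature. By \cite[Definition~12.1]{holmes2015}, $\widetilde{\Mcal}^{\leq 1}$ is the largest substack of $\widetilde{\Mcal}$ over which the pullback of the universal curve is \emph{regular}; your identification of $1$-strong alignment with ``thicknesses equal up to units'' cannot be right, since the latter is invariant under pullback while the paper records that $r$-strong alignment is not. What must actually be proved is that the universal curve over $\Min(\RLC_1)$ is regular, and the paper does this by computing that the characteristic monoid at a node is $\N^2\oplus_\N \N^T$ with $\N \to \N^T$ sending $1 \mapsto t$, which is isomorphic to $\N^{|T|+1}$, hence free, so that \cite[Theorem~III.1.11.6]{ogus} applies. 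This regularity statement --- not a reformulation of $1$-strong alignment --- is the missing ingredient that you yourself flagged as ``the delicate point'' but did not supply.
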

\begin{proof}
We need to prove exactly the three statements in \ref{def:neronmodel} for the universal curve over $\Min(\RLC_r)$. First, note that $\Min(\RLC_r)$ is log smooth over $\Spec \Z$ because for $r < \infty$ it is an open subset of a log blowup and for $r = \infty$ it has as an open cover by such sets.. In particular, it is log regular. Furthermore, note that the log structure on the base of a minimal $r$-rich log curve is by definition locally free. Then by Theorem~III.1.11.6 of \cite{ogus} the stack $\Min(\RLC_r)$ is also regular.

Then as $\Min(\RLC_r)$ is log regular, the locus where the log structure is trivial is dense. (This locus is exactly the locus where the curve is smooth.)

The third part is a bit more complicated, as a curve $C/S$ being aligned in the notion of \cite{holmes2015} (which is immediately the case if it is the underlying algebraic curve of a $\infty$-rich log curve) does not imply that a N\'eron model exists; there are some complications having to do with regularity. However, for this one can use \cite{poiret2020}, where they do take a logarithmic approach. There is a small change of notation; in \cite{holmes2015} N\'eron models are required to be separated, and in \cite{poiret2020} they do not. In \cite{poiret2020} they prove that (not necessarily separated) N\'eron models always exists for a curve over a log regular base. Furthermore by \cite[Theorem~8.3]{poiret2020}, as the universal curve over $\Min(\RLC_r)$ is log aligned, the N\'eron model is separated. Hence the third condition of a N\'eron-model-admitting-morphism is satisfied as well.

For the second part, by \cite[Definition~12.1]{holmes2015} the stack $\widetilde{\Mcal}^{\leq 1}$ is the largest substack of $\widetilde{\Mcal}$ where the pullback of the curve is regular. Again by Theorem~III.1.11.6 of \cite{ogus}, to prove that the universal curve over $\Min(\RLC_1)$ is regular, we just need to show that the stalks of the characteristic monoid are locally free, as log regularity follows immediately from log smoothness. We know by the local descriptions of log structures of log curves from \cite{kato2000} that, as the base of the curve is locally free, we only need to check locally freeness at the nodes. For a geometric point $\overline{s}$ where the tropical curve has edge set $E$ and circuit-connected components $T$, and for $e \in E$ contained in $t \in T$, we need to check whether the monoid $\N^2 \oplus_\N \N^T$ is free, where the map $\N \to \N^2$ is the diagonal and the map $\N \to \N^T$ sends $1$ to $1 \cdot t$ as the curve is $1$-rich. We see this monoid is isomorphic to $\N^{|T| + 1}$, so indeed the universal curve over $\Min(\RLC_1)$ is regular, and hence $\Min(\RLC_1) \to \widetilde{\Mcal}$ factors through $\widetilde{\Mcal}^{\leq 1}$.
\end{proof}

\begin{theorem}
\label{theorem:compbiesel}
The natural maps $\Min(\RLC_1) \to \widetilde{\Mcal}^{\leq 1}$ and $\Min(\RLC_\infty) \to \widetilde{\Mcal}$ are isomorphisms.
\end{theorem}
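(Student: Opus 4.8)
The plan is to upgrade the maps of Lemma~\ref{lem:nmam} to isomorphisms by exhibiting $\Min(\RLC_\infty)$ as \emph{also} terminal in the category of N\'eron-model-admitting morphisms over $\Min(\LC)$. Since $\widetilde{\Mcal}$ is terminal by construction \cite[Corollary~10.3]{holmes2015}, any two terminal objects are canonically isomorphic, and the comparison isomorphism is forced to be the map $\Phi_\infty$ of Lemma~\ref{lem:nmam}. The case $r=1$ will then be deduced by restricting this isomorphism to the open locus where the pulled-back curve is regular. Throughout I work with the underlying algebraic stacks over $\Mf = \Min(\LC)$, reconstructing log structures where needed.

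Concretely, to see that $\Min(\RLC_\infty)$ is terminal I must show that every N\'eron-model-admitting morphism $f: T \to \Mf$ factors uniquely through $\Min(\RLC_\infty) \to \Mf$. By the minimality formalism (Proposition~\ref{def:mrlc}) such a factorisation is the same datum as a minimal $\infty$-rich log structure on the prestable curve $C_T := C \times_{\Mf} T$. To produce one, I use that $T$ is regular (Definition~\ref{def:neronmodel}(1)) and that the non-smooth locus of $C_T/T$ is a normal crossings divisor, so $T$ carries a canonical divisorial fine saturated log structure with free stalks, making $C_T/T$ a log curve in the sense of \cite{kato2000}. The existence of a \emph{separated} N\'eron model (Definition~\ref{def:neronmodel}(3), in Holmes' convention) forces $C_T/T$ to be \emph{log aligned} in the sense of \cite{poiret2020}, i.e.\ the edge lengths in each circuit-connected component lie on a common extreme ray of the free characteristic monoid. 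An extreme ray is in particular a single ray $\N \to \Mbar_T$, so the lengths are $\infty$-close and $C_T/T$ is $\infty$-rich by Lemma~\ref{lem:61015}; passing to the unique minimal $\infty$-rich structure beneath gives the factorisation. Conversely this minimal structure has free characteristic monoid $\N^T$ with each edge length a multiple of a coordinate generator (Proposition~\ref{def:richminimal}), hence lies on an extreme ray, so the minimal $\infty$-rich objects are exactly the log-aligned ones, and uniqueness of the factorisation is the uniqueness of minimal objects (Theorem~\ref{thm:gillam}). This shows $\Min(\RLC_\infty)$ is terminal, whence $\Phi_\infty$ is an isomorphism.

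For $r=1$, recall from \cite[Definition~12.1]{holmes2015} that $\widetilde{\Mcal}^{\leq 1} \subset \widetilde{\Mcal}$ is the largest open substack over which the pulled-back curve is regular. Transporting this open along $\Phi_\infty$, it suffices to identify the locus in $\Min(\RLC_\infty)$ where the universal curve is regular with $\Min(\RLC_1)$. This is exactly the local node computation already used in the proof of Lemma~\ref{lem:nmam}: for an edge $e$ in a circuit-connected component $t$ with length $\lambda_e a_t$, regularity of the total space at the corresponding node is governed by freeness of the amalgamated monoid $\N^2 \oplus_{\N} \N^T$, where $\N \to \N^2$ is the diagonal and $1 \mapsto \lambda_e t$; by Theorem~III.1.11.6 of \cite{ogus} this is free precisely when every $\lambda_e = 1$, i.e.\ on the $1$-rich locus $\Min(\RLC_1) \subset \Min(\RLC_\infty)$ (which is open by Proposition~\ref{prop:richopen}). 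Hence $\Phi_\infty$ restricts to the asserted isomorphism $\Phi_1 : \Min(\RLC_1) \isom \widetilde{\Mcal}^{\leq 1}$.

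The main obstacle is the second paragraph: matching Holmes' and Poiret's conditions (regular base, separated N\'eron model, alignment) against the combinatorial, geometric-point-checkable notion of $\infty$-richness, and verifying that the resulting log structure is genuinely \emph{minimal} so that the factorisation is unique. Two further points need care. First, since neither $\Min(\RLC_\infty)$ nor $\widetilde{\Mcal}$ is quasi-compact, I would carry out the terminality argument compatibly with the exhaustions $\Min(\RLC_\infty) = \bigcup_r \Min(\RLC_r)$ and $\widetilde{\Mcal} = \bigcup_r \widetilde{\Mcal}^{\leq r}$, checking the factorisation on each quasi-compact piece and invoking uniqueness to glue. Second, one must confirm that the factorisation $T \to \Min(\RLC_\infty)$ is a map of \emph{algebraic} stacks over $\Mf$ and not merely of log stacks; this is where the identification of $\Log(\RLC_\infty)$ with an open of $\Log(\LC)$ (Proposition~\ref{prop:mrlc}) does the bookkeeping. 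As an alternative to terminality, one could instead show directly that $\Phi_\infty$ is a log-\'etale monomorphism that is surjective on geometric points — surjectivity again reducing to the log-alignment $\Leftrightarrow$ $\infty$-rich dictionary — and conclude that it is an isomorphism.
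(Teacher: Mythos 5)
Your overall architecture (one direction from Lemma~\ref{lem:nmam}, the other direction from the ``N\'eron model $\Rightarrow$ alignment'' theorems, then a categorical conclusion) matches the paper's in its ingredients, but the step that makes your terminality argument work is missing. Terminality of $\Min(\RLC_\infty)$ requires that every N\'eron-model-admitting $T \to \Mf$ factor \emph{uniquely} through $\Min(\RLC_\infty)$, i.e.\ that a minimal $\infty$-rich log structure on $C_T/T$ compatible with $C_T$ is unique up to unique isomorphism. You justify this by appeal to ``uniqueness of minimal objects (Theorem~\ref{thm:gillam})'', but that is the wrong tool: Gillam's framework gives uniqueness of the minimalisation \emph{below a given} log object, not uniqueness of minimal objects \emph{above a given} underlying curve. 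The latter fails badly in general: by Example~\ref{ex:richalgclosed}, the fiber of $\Min(\RLC_1) \to \Mf$ (and hence of $\Min(\RLC_\infty) \to \Mf$) over a boundary geometric point is $\G_m^E/\G_m^T$, positive-dimensional, so these maps are far from monomorphisms. Uniqueness of the factorisation is in fact true for N\'eron-model-admitting $T$, but only because such $T$ meets the smooth locus densely: the node-smoothing parameters $f_e$ are then non-zero-divisors, which kills the unit ambiguity (equivalently, any generically trivial integral log structure on such $T$ embeds into $\Ocal_T \cap j_*\Ocal_{U_T}^\times$ and is pinned down by its characteristic data). Without an argument of this kind you only get \emph{weak} terminality; combined with terminality of $\widetilde{\Mcal}$ this makes $\Min(\RLC_\infty) \to \widetilde{\Mcal}$ a split epimorphism with section $\widetilde{\Mcal} \to \Min(\RLC_\infty)$, which is not yet an isomorphism. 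This is exactly the point where the paper argues differently: it constructs the two maps and then shows \emph{both} composites are the identity by rigidity --- $\Min(\RLC_\infty)$, $\Min(\RLC_1)$, $\widetilde{\Mcal}$, $\widetilde{\Mcal}^{\leq 1}$ are regular and separated, and the composites agree with the identity on the dense open locus of smooth curves, hence equal it. Some such density-plus-separatedness (or density-plus-rigidification) input is unavoidable, and your write-up does not contain it.

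Two smaller points. First, your claim that the non-smooth locus of $C_T/T$ is a normal crossings divisor is false in general: a bridge node lies in no circuit, so alignment imposes nothing on it, and e.g.\ a compact-type curve over $T = \Spec k[x,y]$ with a node of local equation $x^2+y^3$ is N\'eron-model-admitting with cuspidal discriminant. What survives (and is all you need) is that for regular, hence locally factorial, $T$ the compactifying log structure $\Ocal_T \cap j_*\Ocal_{U_T}^\times$ is fs with free stalks and contains the $f_e$, so it makes $C_T/T$ a log curve. Second, that this log curve is $\infty$-rich is not a consequence of regularity or of the bare existence of a N\'eron model as you phrase it; it is the alignment theorem, namely Theorem~1.2 of \cite{holmescrelle} (N\'eron-model-admitting morphisms are aligned), combined with \cite{poiret2020} to pass to the logarithmic statement --- this is the citation your argument silently relies on. Your $r=1$ reduction, on the other hand, is sound and essentially identical to the paper's: $\widetilde{\Mcal}^{\leq 1}$ is the locus where the pulled-back curve is regular (\cite[Definition~12.1]{holmes2015}), and the node computation with $\N^2 \oplus_\N \N^T$ identifies that locus inside $\Min(\RLC_\infty)$ with $\Min(\RLC_1)$, which is open by Proposition~\ref{prop:richopen}.
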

\begin{proof}
First, we will construct maps the other way. Note that by Theorem~1.2 of \cite{holmescrelle} N\'eron-model admitting morphisms are aligned (as in \cite{holmes2015}). With $j: U \to \Mf$ the embedding of the locus of smooth curves, that means the log structure $j_* \Ocal_U$ is log aligned (as in \cite{poiret2020}), hence this gives a map $\widetilde{\Mcal} \to \Min(\RLC_\infty)$. Also, for $1$-strongly aligned curves this log structure is $1$-rich, hence this map restricts to a map $\widetilde{\Mcal}^{\leq 1}\to \Min(\RLC_1)$.

We claim that $\widetilde{\Mcal}^{\leq 1}$ and $\widetilde{\Mcal}$ are regular and separated. To see this, we note that $\Min(\RLC_1)$ and $\Min(\RLC_\infty)$ are regular as by \ref{lem:nmam}, and $\Min(\RLC_r) \to \Min(\LC)$ for $r < \infty$ is an open embedding followed by a log blowup, and $\Min(\RLC_\infty)$ is the limit of separated stacks. Also, all mentioned morphisms are isomorphisms on the dense locus of smooth curves. Any morphism on a regular separated stack that is the identity on an open is the identity, which means that $\Min(\RLC_1) \to \widetilde{\Mcal}^{\leq 1}$ and $\Min(\RLC_\infty) \to \widetilde{\Mcal}$ have respectively $\widetilde{\Mcal}^{\leq 1}\to \Min(\RLC_1)$ and $\widetilde{\Mcal} \to \Min(\RLC_\infty)$ as inverses, and hence are isomorphisms. 
\end{proof}

\appendix

\section{\texorpdfstring{Minimal objects of $\RLC_r$}{Minimal objects of RLCr}}
\label{sec:appendix}
In this appendix we aim to characterise the minimal objects of $\RLC_r$, and prove that the conditions of the descent lemma, Theorem~\ref{thm:gillam}, hold. We start by defining a notion of a basic $r$-rich log curve, and will then show that an $r$-rich log curve is minimal if and only if it is basic.
\begin{definition}
\label{app:def:richbasic}
Let $C/\Spec k$ be a $r$-rich log curve over an algebraically closed field and let $\Gamma = (V, E)$ be the corresponding tropical curve. Let $T$ denote the set of circuit-connected components, for $t \in T$ let $a_t \in \Mbar_k$ be the root of $T$, and for $e \in t$ let $\lambda_e \mid r$ be such that the length of the $e$th edge is $\lambda_e a_t$. Then the natural map $\N^E \to \Mbar_k$ factors through the \emph{root map} $\N^E \to \N^T$ where $T$ is the set of circuit-connected components and $\N^E \to \N^T$ sends $e$ to $\lambda_e$, and $\N^T \to \Mbar_k$ sends $t$ to $a_t$. The log curve $C/\Spec k$ is called \emph{basic $r$-rich} if the resulting map $\N^T \to \Mbar_k$ is an isomorphism. An $r$-rich log curve $C/S$ is called \emph{basic $r$-rich} if it is basic $r$-rich over all strict geometric points.
\end{definition}

\begin{proposition}
\label{app:prop:richbasicinitial}
Let $\ul{C}/\ul{S}$ be a prestable curve, and consider am $r$-rich log curve structure $C/S$ with sheaves $(M_C,M_S)$ on it. Then there is a basic $r$-rich log curve structure $C'/S'$ on $\ul{C}/\ul{S}$ such that there is a map between the log curves $(C,S) \to (C',S')$ that is the identity on schemes. Both the basic log structure and the map are unique up to unique isomorphism. 
\end{proposition}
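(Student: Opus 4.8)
The plan is to reduce everything to the base $\ul S$: by \cite{kato2000} the log structure on a log curve is determined by the base together with the combinatorics at the nodes, so a map of log curves over the identity of $\ul C/\ul S$ amounts to a morphism $S \to S'$ inducing compatible edge lengths. Working \'etale locally, I would pass to a nuclear chart in the sense of \cite[Definition~3.39]{poiret2020}, so that $\ul S$ has a single deepest stratum whose tropical curve $\Gamma$ has edge set $E$, every other geometric point giving a contraction of $\Gamma$; over such a chart the length function is a global map $E \to \Mbar_S$ and I may choose lifts $\ell(e) \in M_S$. Let $T$ be the set of circuit-connected components of $\Gamma$, and by $r$-richness (Lemma~\ref{lem:61015}) choose for each $t \in T$ a lift $a_t \in M_S$ of the root and multipliers $\lambda_e \mid r$ with $\ell(e) = \lambda_e a_t$ for $e \in t$. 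I would then define $M_{S'}$ to be the log structure associated to the pre-log structure $\N^T \to M_S \to \Ocal_{\ul S}$, $t \mapsto a_t$, with the tautological chart $\N^T \to M_S$ providing the morphism $M_{S'} \to M_S$ and hence $S \to S'$; the curve $C'/S'$ is the log curve it induces on $\ul C$, the length of the node $e$ being $\lambda_e t_e \in \N^T$.

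The heart of the argument, and the step I expect to be the main obstacle, is checking that $C'/S'$ is basic $r$-rich at \emph{every} strict geometric point, not only at the deepest one where it holds by construction. The danger is that a circuit-connected component could split under a contraction, which would force the basic characteristic monoid to grow in rank and be incompatible with the single generization map coming from $M_{S'}$. I would rule this out by proportionality: within a component $t$ one has $\ell(e) = \lambda_e a_t$ with $\lambda_e \geq 1$, so in the integral monoid $\Mbar_S$ the class of $\ell(e)$ vanishes at a point if and only if $a_t$ does; hence all edges of a single component are contracted simultaneously or not at all. Consequently, upon generization a component either survives whole (with root the image of $a_t$ and unchanged multipliers $\lambda_e$, whose gcd is still $1$) or disappears entirely (when $a_t$ becomes a unit). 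Thus $T$ only ever loses elements, the generization map $\N^{T} \to \N^{T'}$ is the evident quotient, and $C'/S'$ satisfies the criterion of Definition~\ref{app:def:richbasic} at each point.

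Finally I would construct the map and prove its uniqueness. The morphism $(C,S)\to(C',S')$ is the one induced by $S\to S'$ through Kato's recipe, compatibility of lengths being immediate from $\lambda_e a_t = \ell(e)$. For uniqueness, suppose $(C'',S'')$ is any basic $r$-rich structure equipped with a map from $(C,S)$. Over a geometric point the characteristic map $\Mbar_{S''} = \N^{T} \to \Mbar_S$ must send the generator $t$ to some $b_t$ with $\lambda_e b_t = \ell(e) = \lambda_e a_t$ for all $e \in t$; since $\gcd_{e \in t}\lambda_e = 1$ this forces $b_t = a_t$ in $\Mbar_S^{\gp}$, and then in $\Mbar_S$ itself as $\Mbar_S$ is integral. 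Hence the characteristic map is forced to be ours, the two charts $\N^T \to M_S$ agree up to units, and the associated log structures are canonically isomorphic over $M_S$; the resulting isomorphism $C' \cong C''$ is the unique one commuting with the maps from $(C,S)$. Globalizing, the local constructions glue because the characteristic data and these forced identifications are independent of all choices, and compatibility with refinements of nuclear charts follows from the behaviour of circuit-connected components under contraction established above.
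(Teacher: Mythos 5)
Your proposal is correct and follows essentially the same route as the paper: both reduce, via the equivalence between maps of log structures and maps of their characteristic monoids (together with Kato's classification of log curves), to constructing the basic structure locally as $\N^T$ equipped with the root map $\N^E \to \N^T$, and then glue using uniqueness up to unique isomorphism. In fact your write-up supplies details the paper's published proof leaves implicit --- the check, via the proportionality $\ell(e) = \lambda_e a_t$, that edges of a circuit-connected component are contracted all-or-nothing so that basicness persists at every generization, and the $\gcd$-argument pinning down the characteristic map in the uniqueness step --- so it can be read as a fleshed-out version of the same argument.
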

\begin{proof}
First, we recall some fact about maps between log structures. Let $M$ be a log structures on a scheme $X$. Then for $M' \to M$ a map of log structures, we have $M' = M \times_{\Mbar} \Mbar'$. This means $M' \to M$ is uniquely determined by its map on the characteristic monoids. Vice versa, if we have a map of sharp sheaves of monoids $\Mbar' \to \Mbar$, there is a unique lift $M \times_{\Mbar} \Mbar' \to M$. In other words, the category of maps of log structures to $M$ is equivalent to the category of sharp monoid maps to $\Mbar$.

Now as both being $r$-rich and being basic $r$-rich is defined on the level of characteristic monoids, we can reduce the proposition to the level of characteristic monoids. Define the category $GS$ having objects $(\Mbar_S'/\N^E, \beta)$ with $\beta: \Mbar_S' \to \Mbar_S$ a map of sharp monoids over $\N^E$. It remains to show this category has an initial object. Locally, this is immediate, as we can take $\Mbar_S' = \N^T$ with $\N^E \to \Mbar_S'$ the root map. Then by uniqueness and uniqueness of isomorphisms, this construction glues to a global construction.

\end{proof}

This proposition on the prevalence of basic curves, also sometimes phrased as the set of basic $r$-rich log curves being \emph{weakly terminal} in the category of $r$-rich log curves, turns out to be enough to both characterise the minimal objects and to fulfil the first condition of the descent lemma Theorem~\ref{thm:gillam}.

\begin{lemma}
\label{app:lem:weaklyterminalminimal}
Let $X$ be a category, and $W$ a subcategory such that for each object in $X$ there is a map to an element of $W$, unique up to unique isomorphism. Then an object in $X$ is minimal if and only if this map is an isomorphism.
\end{lemma}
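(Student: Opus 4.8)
The claim is a purely categorical one: given a category $X$ with a subcategory $W$ such that every object $x \in X$ admits a map $x \to w_x$ to an object of $W$, unique up to unique isomorphism, an object of $X$ is minimal precisely when this canonical map is an isomorphism. The plan is to unwind Definition~\ref{def:min} in this abstract setting and check both directions directly.

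**The two directions.** First I would prove that if the canonical map $\eta_x : x \to w_x$ is an isomorphism, then $x$ is minimal. Given a diagram with maps $i : w_2 \to x$ and $j : w_2 \to w_1$ (with the appropriate underlying structure being identities, in the sense of Definition~\ref{def:min}), I want to produce a unique $w_1 \to x$ making the triangle commute. The idea is to use the weak terminality hypothesis: both $x$ and $w_1$ receive canonical maps to objects of $W$, and since $\eta_x$ is an isomorphism, $x$ itself plays the role of the terminal target. So I would factor the relevant maps through $W$ and invoke the uniqueness-up-to-unique-isomorphism to extract the desired unique morphism $w_1 \to x$; the isomorphism $\eta_x^{-1}$ is exactly what lets me transport the canonical map $w_1 \to w_{w_1}$ back to a map into $x$. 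Conversely, if $x$ is minimal, I would apply the minimality property to the canonical map $\eta_x : x \to w_x$ together with the identity-type map on $x$, producing a section $w_x \to x$; composing with $\eta_x$ and using uniqueness forces this section to be a two-sided inverse, so $\eta_x$ is an isomorphism.

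**The main obstacle.** The subtlety is that Definition~\ref{def:min} is stated for a CFG $F : X \to \LogSch$ with conditions phrased in terms of $\ul{F}$ being the identity on underlying schemes, whereas Lemma~\ref{app:lem:weaklyterminalminimal} is stated for a bare category $X$ with a subcategory $W$. The honest work is to match these up: I need to interpret ``the map to $W$ is the identity on underlying schemes'' correctly and to confirm that the maps appearing in the minimality diagram are exactly the ones to which the weak terminality hypothesis applies. Once the dictionary between ``minimal object in the CFG sense'' and ``the canonical map is an isomorphism'' is pinned down, both implications are short diagram chases driven entirely by the uniqueness clause in the hypothesis. The genuinely delicate point is the uniqueness in the forward direction: I must check that any two maps $w_1 \to x$ completing the triangle agree, and this is precisely where the ``unique up to unique isomorphism'' strength of the hypothesis (rather than mere existence of a weakly terminal map) is essential.

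**Conclusion.** Combining the two implications gives the stated equivalence, and I expect the proof to be a few lines of formal manipulation once the translation of Definition~\ref{def:min} into this abstract setting is made explicit. I would then immediately apply this lemma to $X = \RLC_r$ and $W$ the subcategory of basic $r$-rich log curves, using Proposition~\ref{app:prop:richbasicinitial} to supply the weak terminality hypothesis, thereby identifying the minimal objects of $\RLC_r$ with the basic ones as asserted in Proposition~\ref{def:richminimal}.
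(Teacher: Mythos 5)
Your proposal is correct and follows essentially the same route as the paper: one direction is a direct diagram chase with the uniqueness-up-to-unique-isomorphism hypothesis, and the other uses minimality to produce a left inverse of the canonical map $\eta_x \colon x \to w_x$ which is then upgraded to a two-sided inverse. The paper merely packages this differently (first showing every object of $W$ is minimal, then that any map between minimal objects is an isomorphism); note that your upgrade step, ``composing with $\eta_x$ and using uniqueness,'' tacitly requires the minimality of $w_x$ (equivalently, weak terminality applied at $w_x$), which is exactly what the paper's first step supplies and what your forward direction also yields, since the canonical map of any object of $W$ is an isomorphism.
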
 
\begin{proof}
We start by proving that an element $w \in W$ is minimal. We do this by directly verifying the definition. Let's say we have three objects $x_1,x_2,x$ where $x$ is in $W$, and we have maps $x_2 \to x_1, x_2 \to x$. Now all these three objects have a map to an element of $W$, but by the property of $W$ used on $x_2$, all these objects must be $x$, and we get a commutative diagram
\[\begin{tikzcd}
  & x & \\
x_1 \arrow[ru] &        & \arrow[lu] x \\
  & x_2 \arrow[ru] \arrow[lu] &  \\
\end{tikzcd}\]
The arrow $x \to x$ has to be an isomorphism, which provides us with the necessary map $x_1 \to x$. Again as $W$ is weakly terminal, such a map must be unique, hence $x$ is minimal.

Now that we have proven that an object of $W$ is minimal, it remains to prove that any map between two minimal objects is an isomorphism, as we already know that any object admits a map to an object in $W$ and is minimal if this map is an isomorphism. So if we have such a map $f: w \to z$, then by minimality applied to the to maps $f: w \to z, \id: w\to w$ the map $f$ has a unique left-inverse $g: z \to w$. Then again, $g$ has a unique left-inverse $h$. But now $f = (h \circ g) \circ f = h \circ (g \circ f) = h$, so $g$ is a two-sided inverse of $f$ and $f$ is an isomorphism.
\end{proof}

From this proposition, we immediately get the connection between basic $r$-rich log curves and minimal objects.
\begin{corollary}
\label{app:prop:richmineqbasic}
An $r$-rich log curve is minimal if and only if it is basic.
\end{corollary}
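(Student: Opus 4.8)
The plan is to deduce this corollary as a purely formal consequence of the two immediately preceding results, Proposition~\ref{app:prop:richbasicinitial} and Lemma~\ref{app:lem:weaklyterminalminimal}, so essentially no new mathematical content is needed; the work is in setting up the correct category. First I would fix $X$ to be the category whose objects are $r$-rich log curves and whose morphisms are exactly the maps lying over the identity on the underlying schemes (the morphisms $i$ with $\ul{Fi} = \id$ that appear in Definition~\ref{def:min}). This is closed under composition and contains identities, so it is a genuine subcategory of $\RLC_r$. The point of this choice is that minimality in the sense of Definition~\ref{def:min} is precisely the abstract categorical minimality of $X$: given a span $w_1 \leftarrow w_2 \to z$ in $X$, the required completion $w_1 \to z$ is automatically over the identity on schemes (commutativity forces its underlying scheme map to be the identity), so the diagram condition of Definition~\ref{def:min} is exactly the statement that $z$ is minimal inside $X$.

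Next I would let $W \subset X$ be the subcategory of basic $r$-rich log curves. The key observation is that Proposition~\ref{app:prop:richbasicinitial} is verbatim the hypothesis of Lemma~\ref{app:lem:weaklyterminalminimal}: for every $r$-rich log curve $C/S$ there is a map $(C,S) \to (C',S')$ to a basic $r$-rich log curve which is the identity on underlying schemes, hence a morphism of $X$, and both the target and the map are unique up to unique isomorphism. Thus $W$ is weakly terminal in $X$ in the required sense.

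Applying Lemma~\ref{app:lem:weaklyterminalminimal} then gives immediately that an object of $X$ is minimal if and only if its canonical map to the associated basic $r$-rich log curve is an isomorphism. The final step is to identify this condition with basicness of the object itself: if $C/S$ is already basic, then it serves as its own associated basic curve via the identity, so by the uniqueness clause of Proposition~\ref{app:prop:richbasicinitial} the canonical map is an isomorphism; conversely, if the canonical map $C/S \to C'/S'$ is an isomorphism, then $C/S \cong C'/S'$ with $C'/S'$ basic, so $C/S$ is basic. Combining the two equivalences yields that an $r$-rich log curve is minimal if and only if it is basic, which is the claim.

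The only step requiring genuine care, and the one I would spell out explicitly, is the translation in the first paragraph: that the CFG-theoretic minimality of Definition~\ref{def:min} coincides with the categorical minimality of the hand-built category $X$. Once that identification is in place, everything else is a mechanical invocation of the two cited statements, and there is no combinatorial or geometric obstacle, since all the substantive work—the existence and the uniqueness up to unique isomorphism of the basic model—has already been carried out in Proposition~\ref{app:prop:richbasicinitial}.
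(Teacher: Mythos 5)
Your proof is correct and takes essentially the same route as the paper: the corollary is obtained by applying Lemma~\ref{app:lem:weaklyterminalminimal} with $X$ the category of $r$-rich log curves and morphisms lying over the identity on underlying schemes, and $W$ the basic ones, where Proposition~\ref{app:prop:richbasicinitial} supplies exactly the weak-terminality hypothesis. Your explicit check that Gillam minimality (Definition~\ref{def:min}) agrees with span-completion minimality in this hand-built category, since any completing arrow is forced to lie over the identity, is precisely the translation the paper leaves implicit when it calls the corollary immediate.
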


\begin{proposition}
\label{app:prop:mrlc}
The category $\RLC_r$ satisfies the conditions of the descent lemma (Theorem~\ref{thm:gillam}).
\end{proposition}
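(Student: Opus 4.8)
The plan is to verify directly the two conditions of the descent lemma, Theorem~\ref{thm:gillam}, for the CFG $F : \RLC_r \to \LogSch$, leaning on the three facts already assembled in this appendix: Proposition~\ref{app:prop:richbasicinitial} (every $r$-rich log curve admits a map, unique up to unique isomorphism and inducing the identity on underlying schemes, to a basic $r$-rich one), Corollary~\ref{app:prop:richmineqbasic} (minimal $=$ basic), and Lemma~\ref{app:lem:weaklyterminalminimal} (a morphism between minimal objects lying over the identity of underlying schemes is an isomorphism).

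Condition (1) is then essentially immediate: given $w = (C/S)$, Proposition~\ref{app:prop:richbasicinitial} supplies a basic $r$-rich structure $z$ on $\ul{C}/\ul{S}$ with a map $i : w \to z$ satisfying $\ul{Fi} = \id$, and $z$ is minimal by Corollary~\ref{app:prop:richmineqbasic}.

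For condition (2), I fix a minimal (hence basic) object $z = (C_z/S_z)$ and a morphism $i : w \to z$ over $Fi : S_w \to S_z$, and must show $w$ is minimal exactly when $Fi$ is strict. The engine for both directions is the stability of basicness under strict base change. Granting this, the forward direction is direct: if $Fi$ is strict then $w$ is the strict pullback of the basic curve $z$, hence basic, hence minimal. For the converse I factor the log morphism $Fi$ canonically as $S_w \xrightarrow{g} S_w' \xrightarrow{h} S_z$, where $S_w' = (\ul{S_w}, Fi^* M_{S_z})$, the map $h$ is strict, and $g$ is the identity on underlying schemes. Strictly pulling $z$ back along $h$ produces an $r$-rich log curve $z'$ over $S_w'$ which is basic by stability, and the cartesian property of $z' \to z$ factors $i$ as $w \to z' \to z$ with $w \to z'$ lying over $g$, i.e.\ over the identity of $\ul{S_w}$. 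If $w$ is minimal, then $w \to z'$ is a morphism between two minimal objects over the identity on underlying schemes, so Lemma~\ref{app:lem:weaklyterminalminimal} makes it an isomorphism; this identifies $M_{S_w}$ with $Fi^* M_{S_z}$, so $Fi$ is strict.

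I expect the stability of basicness under strict base change to be the only point requiring genuine (if routine) verification, since it is invoked in both implications above. I would establish it pointwise: at a strict geometric point $\bar{s}$ of $S_w$ with image $\bar{t}$ in $S_z$, the fibre $C_{w,\bar{s}}$ is the base change of $C_{z,\bar{t}}$, so the two carry the same dual graph, edge set $E$ and set of circuit-connected components $T$; strictness identifies $\Mbar_{w,\bar{s}}$ with $\Mbar_{z,\bar{t}}$ compatibly with the length functions $E \to \Mbar$, whence the root map $\N^T \to \Mbar_{w,\bar{s}}$ coincides with that of $z$ at $\bar{t}$ and is an isomorphism precisely when the latter is. Everything else is formal manipulation of the canonical factorization of a log morphism into a strict map followed by one inducing the identity on underlying schemes.
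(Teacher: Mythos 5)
Your proof is correct and follows essentially the same route as the paper: condition (1) from Proposition~\ref{app:prop:richbasicinitial}, and condition (2) by factoring any morphism as a strict map followed by one over the identity on schemes, checking stability of basicness under strict base change on strict geometric points, and using the uniqueness/minimality machinery (Lemma~\ref{app:lem:weaklyterminalminimal}) to handle the over-the-identity part. The only cosmetic difference is that the paper phrases the converse as ``pullback along a non-strict over-the-identity morphism is not basic,'' while you derive the contrapositive by factoring $i$ through the strict pullback $z'$ and concluding the over-the-identity leg is an isomorphism --- logically the same argument, written slightly more explicitly.
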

\begin{proof}
The first condition, of every $r$-rich log curve admitting a map to a basic $r$-rich log curve, is directly given by Proposition~\ref{app:prop:richbasicinitial}.

It remains to show that the pullback of a basic $r$-rich log curve along a map of log schemes $f$ is basic if and only if $f$ is strict. Note that every morphism is by definition the composition of a strict morphism and a morphism lying over the identity of the algebraic scheme of the source. Then it suffices to prove that the pullback of a basic $r$-rich log curve along a strict morphism is basic, and the pullback along a non-strict morphism lying over the identity of the algebraic scheme is not basic. For the first of these statements, note that given a strict morphism $f: U \to V$, any strict geometric point of $U$ is a strict geometric point of $V$, and hence any condition on strict geometric points of $V$ will also hold for strict geometric points of $U$. In particular includes basic $r$-richness. For the second of these statements, we have by definition that over a geometric point, a map from a basic $r$-rich log structure to a $r$-rich log structure is an isomorphism if and only if both log structures are basic.
\end{proof}

\bibliographystyle{hep}
\addcontentsline{toc}{section}{References}
\bibliography{references.bib}


\end{document}